\let\mathcal\mathscr
\numberwithin{equation}{section}
\newtheorem{theorem}{Theorem}[section] 
\newtheorem{lemma}[theorem]{Lemma}
\newtheorem{proposition}[theorem]{Proposition}
\newtheorem{corollary}[theorem]{Corollary}
\newcommand{\lra}{{\longrightarrow}}
\theoremstyle{definition}
\newtheorem{example}[theorem]{Example}
\newtheorem{remark}[theorem]{Remark}
\newtheorem{definition}[theorem]{Definition}
\newcommand{\m}{\mathbf{m}}
\renewcommand{\phi}{\varphi}
\newcommand{\PP}{\mathbb{P}}
\renewcommand{\AA}{\mathbb{A}}
\renewcommand{\leq}{\leqslant}
\renewcommand{\geq}{\geqslant}
\renewcommand{\ge}{\geqslant}
\newcommand{\x}{\mathbf{x}}
\renewcommand{\c}{\mathbf{c}}
\newcommand{\z}{\mathbf{z}}
\renewcommand{\b}{\mathbf{b}}
\renewcommand{\r}{\mathbf{r}}
\DeclareMathOperator{\moo}{mod} 
\renewcommand{\bmod}[1]{\,(\moo{#1})}
\let\emptyset\varnothing
\DeclareSymbolFont{bbold}{U}{bbold}{m}{n}
\DeclareSymbolFontAlphabet{\mathbbold}{bbold}
\newcommand{\md}[1]{  \left(\textnormal{mod}\ #1\right)}
\renewcommand{\P}{\mathbb{P}}
\newcommand{\A}{\mathbb{A}}
\newcommand{\Q}{\mathbb{Q}}
\newcommand{\F}{\mathbb{F}}
\newcommand{\N}{\mathbb{N}}
\newcommand{\R}{\mathbb{R}}
\newcommand{\Z}{\mathbb{Z}}
\renewcommand{\l}{\left}
\renewcommand{\r}{\right}
\renewcommand{\b}{\mathbf}
\renewcommand{\c}{\mathcal}
\renewcommand{\epsilon}{\varepsilon}
\renewcommand{\leq}{\leqslant}
\renewcommand{\geq}{\geqslant}
\renewcommand{\#}{\sharp}
\DeclareMathOperator*{\Osum}{\sum{}^*}
\newcommand{\beq}[2]
{
\begin{equation}
\label{#1}
{#2}
\end{equation}
}
\title{Schinzel Hypothesis on average  
and rational points}
\author{Alexei N. Skorobogatov} 
\address{Department of Mathematics\\
South Kensington Campus\\
Imperial College London\\
SW7~2AZ United Kingdom \ --- \ and \ --- \ 
Institute for the Information Transmission Problems\\
Russian Academy of Sciences\\
Moscow, 127994 Russia}
\email{a.skorobogatov@imperial.ac.uk}
 \author{Efthymios Sofos} 
\address{
Department of Mathematics\\
University of Glasgow,
University Place,
 Glasgow,  G12~8QQ United Kingdom}
\email{efthymios.sofos@glasgow.ac.uk}
 \subjclass[2010]
{
11N32, 
14G05. 
}
\date{\today}
\begin{document}

\begin{abstract}
We resolve  
Schinzel's Hypothesis (H) for
$100\%$ of  
polynomials   
of arbitrary degrees.
We deduce that
a positive proportion of diagonal conic bundles over $\Q$ with any given number of degenerate
fibres have a rational point, and obtain similar results for generalised Ch\^atelet equations.

\end{abstract}

\maketitle

\setcounter{tocdepth}{1}
\tableofcontents

 \vspace{-1cm}

\section{Introduction}   
\label{s:intro} 

Schinzel's Hypothesis (H) \cite{SS} has very strong implications for the 
local-to-global principles for rational points on conic bundles,
as demonstrated by Colliot-Th\'el\`ene and Sansuc in \cite{CS82}. 
There have been many subsequent developments and applications to more general varieties
by Serre, Colliot-Th\'el\`ene, Swinnerton-Dyer and others. 
We call $P(t)\in\Z[t]$ a {\em Bouniakowsky polynomial}    
if the leading coefficient of $P(t)$ is positive and for every
prime $\ell$ the reduction of $P(t)$ modulo $\ell$ is not a multiple of $t^\ell-t$. 
 It is not hard to prove that an explicit positive proportion of polynomials of 
given degree are Bouniakowsky polynomials (Corollary \ref{densitySch} below). A conjecture stated by Bouniakowsky in 1854
\cite[p.~328]{Bou}, now a particular case of
Schinzel's Hypothesis (H),  
says that if $P(t)$ is an irreducible Bouniakowsky polynomial,
then there are infinitely many natural numbers $n$ such that $P(n)$ is prime.
Bouniakowsky added this remark: ``{\em Il est \`a pr\'esumer que la d\'emonstration rigoureuse du th\'eor\`eme \'enonc\'e sur les {\em progressions
arithm\'etiques des ordres sup\'erieurs} conduirait, dans l'\'etat actuel de la th\'eorie des nombres, \`a des difficult\'es insurmontables ; n\'eanmoins, sa r\'ealit\'e ne peut pas \^etre r\'evoqu\'ee en doute}".

The inaccessibility of Schinzel's hypothesis and its quantitative
version, the Bateman--Horn conjecture \cite{BatH}, in degrees greater than 1 or for
more than one polynomial motivates a search for more accessible replacements. In the case of several multivariate
polynomials of degree 1 such a replacement is provided by work of Green, Tao and Ziegler in additive combinatorics
(see \cite{GTZ} and references there, and \cite{BMS, HSW, HW} for applications to rational points).

In this paper we study rational points on varieties in families, with the aim of proving
that a positive proportion of varieties in a given family have rational points.
To apply the method of Colliot-Th\'el\`ene and Sansuc in this situation,
one does not need the full
strength of Bouniakowsky's conjecture, namely that {\em every} irreducible 
Bouniakowsky polynomial
represents {\em infinitely many} primes: it is enough to know that 
{\em most} polynomials satisfying the obvious necessary condition represent {\em at least
one} prime. We propose the following replacement for Bouniakowsky's conjecture.
The {\em height} of a polynomial $P(t)\in\Z[t]$ is defined 
as the maximum of the absolute values of its coefficients.

\begin{theorem} \label{A} Let $d$ be a positive integer. When ordered by height, for 
$100\%$ of Bouniakowsky polynomials $P(t)$ of degree $d$ there exists
a natural number $m$ such that $P(m)$ is prime.
\end{theorem}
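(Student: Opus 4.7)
My plan is to apply a second-moment (Chebyshev) argument over the family of Bouniakowsky polynomials of height at most $H$. Choose a parameter $M = M(H) \to \infty$ with $M/\log H \to \infty$ (e.g.\ $M = (\log H)^2$), and set
\[
N(P) := \#\{1 \le m \le M : P(m) \text{ is prime}\}.
\]
The goal of showing $N(P) \ge 1$ for all but $o(|\mathrm{Bou}(H)|)$ polynomials will reduce, via Chebyshev's inequality, to matching estimates on the first and second moments.

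For the first moment, I would swap the order of summation,
\[
\sum_{P \in \mathrm{Bou}(H)} N(P) \;=\; \sum_{m \le M} \#\{P \in \mathrm{Bou}(H) : P(m) \text{ is prime}\},
\]
and for fixed $m \le M$ observe that $P(m)$ is a linear form in the coefficients $(a_0,\ldots,a_d)$ of $P$. A lattice-point count combined with the prime number theorem, together with the positive density of Bouniakowsky polynomials (Corollary \ref{densitySch}), should yield the expected lower bound $\gg |\mathrm{Bou}(H)| \cdot M/\log H$; one must only check that the local Bouniakowsky congruence conditions are compatible with the primality of $P(m)$, which is clear since both are non-trivial local constraints at each prime.

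For the second moment I would expand
\[
\sum_{P} N(P)^2 \;=\; \sum_{m_1, m_2 \le M} \#\{P \in \mathrm{Bou}(H) : P(m_1) \text{ and } P(m_2) \text{ are both prime}\}.
\]
The diagonal $m_1 = m_2$ reduces to the first moment and is of lower order provided $M \to \infty$. For $m_1 \ne m_2$, the linear forms $P(m_1), P(m_2)$ in the coefficient variables are linearly independent by Vandermonde, so we are counting simultaneous prime values of two linear forms in $d+1$ variables. What is required is a Hardy--Littlewood asymptotic (not merely an upper bound) with the correct singular series $\mathfrak{S}(m_1, m_2)$; after averaging over $(m_1, m_2)$ the singular series should contribute $1 + o(1)$, producing a matching main term $\sim |\mathrm{Bou}(H)| \cdot M^2/(\log H)^2$ so that $\mathrm{Var}(N) = o((\mathbb{E} N)^2)$.

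The main obstacle is precisely this on-average second-moment asymptotic. A Selberg-sieve upper bound carries a multiplicative constant strictly greater than $1$ and thus only yields a positive proportion of $P$ with $N(P) \ge 1$, not $100\%$. A genuine asymptotic formula for simultaneous prime values of two linear forms, averaged over the shift parameters $m_1, m_2$, is therefore required, and such an on-average Hardy--Littlewood statement is almost certainly the technical core from which Theorem \ref{A} will be deduced.
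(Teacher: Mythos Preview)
Your overall architecture is right, and you have correctly identified the technical heart of the argument: an asymptotic (not merely an upper bound) for
\[
\sum_{\substack{|P|\leq H\\ P>0}} \Lambda(P(m_1))\,\Lambda(P(m_2)),\qquad m_1\neq m_2,
\]
uniform in $m_1,m_2$. This is exactly what the paper proves (Theorem~\ref{thm:mainresulthalb}), and it is indeed the core input.

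However, there is a genuine gap in the deduction. Your claim that ``after averaging over $(m_1,m_2)$ the singular series should contribute $1+o(1)$'' is false. The singular series attached to the pair of linear forms $P\mapsto P(m_1)$, $P\mapsto P(m_2)$ in the coefficient variables is $\prod_{p\mid m_1-m_2}\frac{p}{p-1}$, and its average over $m_1\neq m_2\leq M$ tends to
\[
c:=\prod_{p}\Bigl(1+\tfrac{1}{p(p-1)}\Bigr)\approx 1.94,
\]
not to $1$. Hence $\mathbb E[N^2]\sim c\,(\mathbb E[N])^2$, so $\mathrm{Var}(N)\sim(c-1)(\mathbb E[N])^2$, and Chebyshev around the global mean only yields a positive proportion of $P$ with $N(P)>0$; this is essentially Filaseta's result, not the $100\%$ statement.

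The point you are missing is that the variance of $N(P)$ around its global mean does not vanish because different Bouniakowsky polynomials have genuinely different expected prime counts, governed by the Bateman--Horn factor $\mathfrak S_P$. The paper's remedy is to compare $\theta_P(x)$ not to its global average but to the $P$-dependent prediction $\mathfrak S_P(x)\,x$, and to bound
\[
\sum_P \bigl(\theta_P(x)-\mathfrak S_P(x)\,x\bigr)^2
=\sum_P\theta_P(x)^2 - 2x\sum_P\mathfrak S_P(x)\theta_P(x)+x^2\sum_P\mathfrak S_P(x)^2.
\]
Each of the three sums has main term $\approx c\,x^2\cdot\#\texttt{Poly}(H)$ with the \emph{same} constant $c$ (Lemmas~\ref{lem:bachist23457}, \ref{lem:secomom}, \ref{lem:secomomter5226anna}), so the main terms cancel and the dispersion is $O(x^2/\log x)$. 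One then uses that $\mathfrak S_P(x)\gg(\log\log x)^{1-d}$ for every Bouniakowsky $P$ (Lemma~\ref{lem:beta0}) to conclude $\theta_P(x)>0$ for $100\%$ of them. Introducing and controlling the cross term $\sum_P\mathfrak S_P\theta_P$ is precisely the extra ingredient your outline lacks.
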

This improves on previous work of Filaseta \cite{MR945264}  who showed that a positive proportion of Bouniakowksy polynomials represent a prime.
Note that stating Schinzel's Hypothesis for infinitely many primes is trivially equivalent
to stating it for at least one prime \cite[p.~188]{SS}, 
but this is no longer so if we are only concerned with 100\% of polynomials.

Theorem \ref{A} is a particular case of a more general result for $n$ polynomials,
where certain congruence conditions are allowed.
We denote the height of $P(t)\in\Z[t]$ by $|P|$. The height of
an $n$-tuple of polynomials $\b P=(P_1(t), \ldots, P_n(t))\in(\Z[t])^n$
is defined as $|\b P|=\max_{i=1,\ldots,n}(|P_i|)$.
We call $\b P$  a {\em Schinzel $n$-tuple} if for every prime $\ell$
the reduction modulo $\ell$ of the product $P_1(t)\ldots P_n(t)$ is not divisible by $t^\ell-t$,
and the leading coefficient of each $P_i(t)$ is positive.

\begin{theorem}
\label{cor:almostal}
Let $d_1, \ldots, d_n$ be positive integers. Fix integers $n_0$ and $M$.
Assume we are given  
$Q_1(t),\ldots,Q_n(t)$ in $\Z[t]$  
such that $\prod_{i=1}^n Q_i(n_0)$ and $M$ are coprime, and 
$\deg(Q_i(t))\leq d_i$ for $i=1,\ldots, n$.
When ordered by height, for 
$100\%$ of Schinzel $n$-tuples $(P_1(t),\ldots, P_n(t))$
such that $\deg(P_i(t))=d_i$ and $P_i(t)-Q_i(t)\in M\Z[t]$ for each $i=1,\ldots,n$,
there exists a natural number
$m \equiv  n_0  \md{M}$ such that 
 $P_1(m),\ldots,P_n(m)$ are pairwise different primes.
 \end{theorem}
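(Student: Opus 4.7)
The plan is to prove a quantitative form of the statement by a second-moment (variance) argument. Fix a large height parameter $H$ and let $\mathcal F(H)$ denote the set of Schinzel $n$-tuples $\b P$ with $\deg(P_i)=d_i$, $|P_i|\le H$, and $P_i\equiv Q_i\pmod M$. Choose a counting cutoff $X=X(H)$ (a small positive power of $H$ suffices) and define
\[
W(\b P)=\sum_{\substack{m\le X\\ m\equiv n_0\bmod M}}\prod_{i=1}^{n}\Lambda(P_i(m)),
\qquad N(\b P)=\#\{m\le X:m\equiv n_0\bmod M,\ P_i(m)\text{ prime }\forall i\}.
\]
Since $W(\b P)\le N(\b P)\cdot\prod_i d_i (\log H)^n+O(\sqrt{X}H^{O(1)})$ (the prime power contribution is negligible), the Cauchy–Schwarz inequality
\[
\Big(\sum_{\b P\in\mathcal F(H)}W(\b P)\Big)^{2}\le \#\{\b P: N(\b P)>0\}\cdot\sum_{\b P\in\mathcal F(H)}W(\b P)^2
\]
reduces the theorem to showing that the first moment squared and the second moment have ratio $(1+o(1))|\mathcal F(H)|$.

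For the first moment I would swap summations and, for each fixed $m\equiv n_0\bmod M$ with $1\le m\le X$, evaluate
\[
\sum_{\b P\in\mathcal F(H)}\prod_{i=1}^{n}\Lambda(P_i(m))=\prod_{i=1}^{n}\sum_{P_i}\Lambda(P_i(m)),
\]
using independence of the polynomials once the (negligible, by Corollary \ref{densitySch}) Schinzel condition is removed. As $P_i$ runs over $Q_i+M\,\Z[t]_{\le d_i}\cap[-H,H]^{d_i+1}$, the value $P_i(m)$ runs over an arithmetic progression to modulus $M$ with common difference divisible by $M$, of length $\asymp H^{d_i+1}/m^{d_i}$. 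An application of the Siegel--Walfisz theorem (valid because $\gcd(Q_i(m),M)=1$, which follows from our hypothesis on $n_0$ after a local analysis) then gives the Bateman–Horn-type main term with singular series $\mathfrak S(\b P;m)$. Averaging over $m$, I obtain a clean asymptotic of shape $\sum_{\b P}W(\b P)\sim c\,|\mathcal F(H)|\,X/M$ where $c$ is an average singular series.

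The second moment is where the real work lies. Expanding yields a double sum over $m_1,m_2\le X$ of
\[
T(m_1,m_2)=\sum_{\b P\in\mathcal F(H)}\prod_{i=1}^{n}\Lambda(P_i(m_1))\Lambda(P_i(m_2)).
\]
For the diagonal $m_1=m_2$ one proceeds as in the first moment, contributing $O(X(\log H)^{O(1)}|\mathcal F(H)|/X)$, which is acceptable. For $m_1\neq m_2$ the pair $(P_i(m_1),P_i(m_2))$ is an integral linear form in the coefficients of $P_i$ whose image, as the coefficients vary, is a two-dimensional lattice coset; here one needs to understand primes in two linear forms \emph{on average over the lattice}, which reduces (by a smooth partition of the coefficient cube together with elementary lattice geometry) to counting pairs $(p_1,p_2)$ of primes in prescribed arithmetic progressions with $p_1,p_2$ constrained by a linear relation coming from the pair $(m_1,m_2)$. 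This two-linear-form problem is accessible by sieve methods (Selberg upper bound sieve for the primes at the large modulus $\prod_i P_i(m_j)$, or an adapted form of the dispersion/Maynard apparatus), producing the expected product of singular series uniformly in $m_1,m_2$. The \textbf{main obstacle} is obtaining sufficient uniformity in $m_1,m_2$ and $\b P$ so that the resulting singular series matches the squared first moment up to $1+o(1)$; in particular one needs a matching lower bound for the average singular series $\mathfrak S(\b P)$ (that is, that $\mathfrak S$ is bounded below in mean square), which is proved by a local density argument using that the Schinzel condition forces no local obstruction at every prime.

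Summing $T(m_1,m_2)$ over $m_1,m_2$ and dividing by the squared first moment gives a ratio $1+o(1)$, so by Cauchy–Schwarz almost all $\b P\in\mathcal F(H)$ satisfy $N(\b P)>0$. Checking that the natural $m$ produced lies in the required residue class mod $M$ is automatic from the definition of $W(\b P)$, and this establishes Theorem \ref{cor:almostal}.
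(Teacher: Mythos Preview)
Your Cauchy--Schwarz architecture cannot yield $100\%$, only a positive proportion. The reason is that the ``correct'' size of $W(\b P)$ is not a constant independent of $\b P$: by Bateman--Horn one has $W(\b P)\sim \mathfrak S_{\b P}\cdot X/M$, and the singular series $\mathfrak S_{\b P}$ has strictly positive variance as $\b P$ ranges over random tuples. Indeed, over $\texttt{Poly}(H)$ the mean of each Euler factor $\frac{1-Z_{\b P}(\ell)/\ell}{(1-1/\ell)^n}$ is $1$ while its second moment is $\gamma_n(\ell)>1$ (Lemma~\ref{lem:corolhaha}); by independence over primes, $E[\mathfrak S_{\b P}^2]=\prod_{\ell\nmid M}\gamma_n(\ell)>1=(E[\mathfrak S_{\b P}])^2$. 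Hence the ratio $\big(\sum_{\b P}W\big)^2\big/\big(|\mathcal F(H)|\sum_{\b P}W^2\big)$ tends to $\prod_\ell\gamma_n(\ell)^{-1}<1$, and your inequality gives only that fraction of $\b P$ with $N(\b P)>0$. The paper avoids this by comparing $\theta_{\b P}(x)$ not to a global mean but to the $\b P$-dependent target $\mathfrak S_{\b P}(x)\,x$: it bounds the dispersion $\sum_{\b P}(\theta_{\b P}(x)-\mathfrak S_{\b P}(x)x)^2$ (Theorem~\ref{thm:almostal}) and then invokes the deterministic lower bound $\mathfrak S_{\b P}(x)\gg(\log\log x)^{n-d}$ valid for every Schinzel tuple (Lemma~\ref{lem:beta0}) to force $\theta_{\b P}(x)>0$ for $100\%$ of them.

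Two secondary issues would also need repair. Taking $X$ a small power of $H$ is far too ambitious: the paper works with $x\asymp(\log H)^{A}$, and its key asymptotic for $\sum_P\Lambda(P(m_1))\Lambda(P(m_2))$ (Theorem~\ref{thm:mainresulthalb}, proved via Davenport's M\"obius--exponential-sum bound rather than sieves) requires $m_1,m_2\le(\log H)^{O(1)}$; for $m_j$ up to $H^\epsilon$ the image lattice of $(P(m_1),P(m_2))$ has index $|m_1-m_2|$, and obtaining an asymptotic uniformly at that level of distribution is beyond Siegel--Walfisz. Relatedly, an upper-bound Selberg sieve cannot evaluate $T(m_1,m_2)$ asymptotically---it loses a constant factor, which is fatal precisely where you need the second moment to match the first moment squared to within $1+o(1)$.
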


The special case $M=1$ shows that, with probability $100\%$, an $n$-tuple of 
integer polynomials satisfying the necessary local conditions
simultaneously represent primes. Theorem \ref{A} is the special case for $n=1$.
The proof of Theorem \ref{cor:almostal}
occupies most of the paper; we give more details about the strategy of proof later in this introduction.

\medskip

In this paper we apply our analytic results to rational points on
varieties in families, where the parameter space is the space of coefficients of 
generic polynomials of fixed degrees.
Among many potential applications we choose to consider generalised Ch\^atelet
varieties (\ref{norm-eq}) and diagonal conic bundles (\ref{cb}).
Using Theorem \ref{cor:almostal} we obtain a weaker version of
the Hasse principle for equations
\begin{equation}
{\rm N}_{K/\Q}(\z)=P(t)\neq 0, \label{norm-eq}
\end{equation}
where $K$ is a fixed cyclic extension of $\Q$ and ${\rm N}_{K/\Q}(\z)$
is the associated norm form, for 100\% of Bouniakowsky polynomials $P(t)$ of given degree,
see Theorem \ref{thm1}.
(See also Theorem \ref{thm2} for the case when $P(t)$ is a product of generic
Bouniakowsky polynomials.) 
It implies

\begin{theorem} \label{B} Let $d$ be a positive integer. 
For a positive proportion of polynomials $P(t)\in\Z[t]$ of degree $d$ ordered by height,
the affine variety given by {\rm (\ref{norm-eq})} has a $\Q$-point.
\end{theorem}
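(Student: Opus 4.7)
The plan is to combine Theorem~\ref{cor:almostal} (the case $n=1$) with the Hasse norm theorem. Since $K/\Q$ is cyclic, Kronecker--Weber gives $K\subseteq\Q(\zeta_M)$ for some positive integer $M$. The conductor of $K/\Q$ then divides $M$, so at every prime $q\mid M$ ramified in $K$ one has $1+M\Z_q\subseteq\nm_{K_\fq/\Q_q}(K_\fq^\times)$. For any rational prime $p\equiv 1\pmod M$ (so $p\nmid M$): its Frobenius in $\Gal(\Q(\zeta_M)/\Q)\simeq(\Z/M\Z)^\times$ is trivial, and hence so is its restriction to $\Gal(K/\Q)$, so $p$ splits completely in $K$ and the identification $K\otimes\Q_p\simeq\Q_p^{[K:\Q]}$ makes $p$ a local norm at $p$; units are local norms at unramified $q\neq p$; the inclusion above yields a local norm at ramified $q\mid M$; and $p>0$ is a local norm at $\infty$. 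Hasse's norm theorem for cyclic extensions then gives $p=\nm_{K/\Q}(z)$ for some $z\in K^\times$.

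Next, I would apply Theorem~\ref{cor:almostal} with $n=1$, $Q_1(t):=t^d$, and $n_0:=1$: since $Q_1(n_0)=1$ is coprime to $M$, for $100\%$ of Bouniakowsky polynomials $P(t)$ of degree $d$ satisfying $P\equiv t^d\pmod{M\Z[t]}$, ordered by height, there exists a natural number $m\equiv 1\pmod M$ with $p:=P(m)$ prime. Then $p\equiv m^d\equiv 1\pmod M$, so by the preceding paragraph $p=\nm_{K/\Q}(z)$ for some $z\in K^\times$. Writing $\z$ for the coordinates of $z$ in a fixed $\Q$-basis of $K$, the pair $(\z,m)$ is a $\Q$-point of the affine variety~(\ref{norm-eq}), as $P(m)=p\neq 0$.

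To convert ``$100\%$ of this sub-family'' into ``positive proportion of all degree-$d$ polynomials'': the congruence $P\equiv t^d\pmod{M\Z[t]}$ restricts each of the $d+1$ coefficients to a single residue class modulo $M$, leaving an asymptotic fraction $M^{-(d+1)}>0$ of height-$H$ polynomials. Within this sub-box the Bouniakowsky condition still has positive density, because at each prime $\ell\mid M$ the forced reduction $P\equiv t^d\in\F_\ell[t]$ automatically avoids divisibility by $t^\ell-t$ (since $t^d$ has no nonzero root in $\F_\ell$), while at $\ell\nmid M$ the local density at $\ell$ matches the computation underlying Corollary~\ref{densitySch}. Together with the preceding paragraph, this yields the positive proportion claimed in Theorem~\ref{B}. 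The main obstacle is the initial setup of $M$: one must choose a single modulus that simultaneously controls Frobenius in $\Gal(K/\Q)$ and absorbs the local norm groups at every ramified prime, a classical matter that nonetheless requires careful verification.
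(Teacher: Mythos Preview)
Your proof is correct, but it takes a different route from the paper's. The paper deduces Theorem~\ref{B} from Theorem~\ref{thm1}, which is a stronger ``$100\%$ Hasse principle'' statement: for every Bouniakowsky polynomial in the set $\mathcal M$ (those with $U_\m(\Z_p)\neq\emptyset$ at each ramified prime $p\in S$), the variety has a $\Q$-point. The proof of Theorem~\ref{thm1} uses a compactness argument to cover $\mathcal M$ by finitely many congruence boxes, applies Theorem~\ref{cor:almostal} in each box, and then invokes global reciprocity (Albert--Brauer--Hasse--Noether) at the single new prime to force the Brauer class $(\chi,q)$ to vanish. To extract Theorem~\ref{B}, the paper then exhibits one positive-density sub-family lying inside $\mathcal M$, namely those $P$ whose constant term is an $r$-th power in $\Z_p^*$ for every $p\in S$.

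Your argument instead fixes one explicit congruence box $P\equiv t^d\pmod{M}$ with $M$ coming from Kronecker--Weber, applies Theorem~\ref{cor:almostal} a single time, and concludes via the Hasse norm theorem for cyclic $K/\Q$. This is essentially the ``easy'' approach the paper sketches in the introduction and in Theorem~\ref{easy}, but with the class-number-$1$ hypothesis traded for the cyclicity of $K/\Q$ (so that Hasse's norm theorem applies and one obtains a rational rather than integral point). What you gain is directness: no compactness, no intermediate Theorem~\ref{thm1}. What the paper's route gains is the stronger statement of Theorem~\ref{thm1} itself, which identifies exactly which local obstruction ($\Z_p$-solubility at $S$) governs the $100\%$ claim and which feeds into the later reducible and conic-bundle results. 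For Theorem~\ref{B} alone, your argument is the cleaner one.
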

Explicit estimates in the case $K=\Q(\sqrt{-1})$ are given in Section~\ref{s:newapp}.
If $K$ is a totally imaginary abelian extension of $\Q$ of class number 1, then
the same statement holds, with the following easy proof. By the Kronecker--Weber theorem
we have $K\subset\Q(\zeta_M)$ for some $M\geq 1$.
Hence all primes in the arithmetic progression $1\bmod M$ split in $K$. 
Theorem \ref{cor:almostal} implies that a random
Bouniakowsky polynomial of degree $d$ congruent to the constant polynomial $1$ modulo $M$
represents a prime. This prime $p$ is the norm of a principal integral ideal $(x)\subset K$.
Since $K$ is totally imaginary, we have $p={\rm N}_{K/\Q}(x)$. 
(See Theorem \ref{easy} for a more general statement.) Here, at the expense of 
the condition on the class number of $K$, we do not require $K$ to be 
cyclic over $\Q$ and we find an integral (and not just rational) solution of (\ref{norm-eq}).

A stronger version of Theorem \ref{cor:almostal}, where we require primes represented
by polynomials to satisfy additional conditions in terms of quadratic residues, allows us to incorporate
into our technique an estimate for certain character sums due to 
Heath-Brown \cite[Cor.~4]{MR1347489}. This leads to
the following result, proved in \S \ref{ppp} as a consequence of Theorem \ref{thm3}.

\begin{theorem} \label{TTT}
Let $n_1, n_2, n_3 $ be integers
such that $n_1>0$, $n_2>0$, and $n_3\geq 0$, and let $n=n_1+n_2+n_3$. Let 
$a_1, a_2, a_3$ be non-zero integers, and let $d_{ij}$ be natural numbers
for $i=1,2,3$ and $j=1,\ldots,n_i$. Then
for a positive proportion of $n$-tuples $(P_{ij})\in\Z[t]^n$ 
with $\deg(P_{ij}(t))=d_{ij}$, 
ordered by height, the following conic bundle surface has a $\Q$-point contained in a smooth fibre:
\beq
{cb}
{\hspace{-0,1cm}
a_1 \prod_{j=1}^{n_1}P_{1,j}(t)\,x^2+a_2 \prod_{k=1}^{n_2}P_{2,k}(t)\,y^2
+a_3 \prod_{l=1}^{n_3}P_{3,l}(t)\,z^2=0.}
\end{theorem}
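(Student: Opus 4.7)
The plan is to reduce Theorem \ref{TTT} to an enhanced version of Theorem \ref{cor:almostal} in which the primes produced by specialisation satisfy prescribed quadratic residue conditions, and then to invoke the Hasse--Minkowski theorem. Writing $Q_i(t)=\prod_j P_{ij}(t)$, if $t=m$ is a specialisation at which the fibre of (\ref{cb}) is smooth, then a rational point on that fibre exists if and only if the diagonal ternary form
\[
a_1 Q_1(m)\,x^2 + a_2 Q_2(m)\,y^2 + a_3 Q_3(m)\,z^2
\]
is isotropic over $\Q$. Once every $P_{ij}(m)$ is prime, the discriminant of this form is supported on the fixed set of primes dividing $2a_1a_2a_3$ together with the primes $P_{ij}(m)$ themselves, and local solubility at primes outside this finite union is automatic. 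The remaining Hilbert symbol conditions at the big primes translate into Legendre symbol identities among the $P_{ij}(m)$.

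The first step is to fix an admissible congruence class $m\equiv n_0\pmod{M}$, with $M$ divisible by a sufficiently high power of $2$ and by every odd prime dividing $2a_1a_2a_3$, so that the form is soluble at the archimedean place and at every prime dividing $2a_1a_2a_3$, and so that the residues $P_{ij}(m)\pmod{4}$ are pinned down and quadratic reciprocity yields clean bilinear relations. After this normalisation the residual local condition at each prime $p=P_{ij}(m)$ takes the form
\[
\prod_{(a,b)\in S_{ij}} \left(\frac{P_{ab}(m)}{P_{ij}(m)}\right) = \epsilon_{ij}
\]
for explicit subsets $S_{ij}$ of the index set and signs $\epsilon_{ij}\in\{\pm 1\}$. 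A combinatorial check, using that quadratic reciprocity imposes a single linear relation among these symbols, shows that at least one target pattern $\{\epsilon_{ij}\}$ is jointly compatible; this is the pattern we aim to achieve.

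The second step is to establish a refinement of Theorem \ref{cor:almostal} that counts Schinzel $n$-tuples in the prescribed congruence class for which some $m$ produces primes $P_{ij}(m)$ realising the prescribed sign pattern. Expanding the indicator of the sign pattern into real characters decomposes the count into a principal term, equal to the count from Theorem \ref{cor:almostal} times $2^{-r}$ where $r$ is the number of imposed sign conditions, plus contributions indexed by non-trivial characters. The estimate of Heath-Brown \cite[Cor.~4]{MR1347489} for double oscillations of real characters delivers the square-root cancellation needed to absorb each non-principal contribution into the error term of the sieve that drives Theorem \ref{cor:almostal}. This explains why one obtains a positive proportion rather than $100\%$: out of $2^r$ possible sign patterns, one prescribed pattern is being enforced.

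The main obstacle is this second step. The characters in question have moduli that depend on the variable $m$, namely products of the very primes being produced, so one cannot directly invoke Heath-Brown's bound; one must interchange the summations over polynomial coefficients and over $m$, recast the Legendre symbols as Dirichlet characters to moduli formed from the other $P_{ab}(m)$, and verify that these moduli are non-square often enough for \cite[Cor.~4]{MR1347489} to yield genuine cancellation. Once the refined count is in hand, Theorem \ref{TTT} follows immediately: a positive proportion of Schinzel $n$-tuples in the prescribed congruence class produce an $m$ with all $P_{ij}(m)$ prime and all Hilbert symbol conditions satisfied, the corresponding fibre of (\ref{cb}) is everywhere locally soluble, and the Hasse--Minkowski theorem furnishes a $\Q$-point on a smooth fibre.
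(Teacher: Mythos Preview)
Your proposal follows essentially the same route as the paper: fix congruences modulo $M=8a_1a_2a_3$ (after reducing to $a_1a_2a_3$ square-free and not all $a_i$ of one sign) to force local solubility at the bad primes, expand the solvability indicator into products of Legendre symbols, and kill the oscillating terms on average over $\b P$ using Heath-Brown's estimate. The paper packages this as Lemma~\ref{lem:chatel34}, Proposition~\ref{prop:rhbsums} and Proposition~\ref{sunset}, deducing Theorem~\ref{TTT} as Corollary~\ref{cor1}.

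Two points where your write-up diverges from what actually happens:

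\textbf{Where the ``positive proportion'' comes from.} It is \emph{not} the factor $2^{-r}$. Within a fixed congruence class $\b P\equiv\b Q\pmod M$ of Schinzel $n$-tuples the paper obtains solubility for $100\%$ of tuples: the counting function satisfies $C_{\b P}(x)\geq 2^{1-n}\theta_{\b P}(x)-x^{4/5}$ for almost all $\b P$, and since $\theta_{\b P}(x)\to\infty$ this forces $C_{\b P}(x)>0$. The drop from $100\%$ to ``positive proportion'' occurs only because the congruence class (and the Schinzel condition) itself is a positive-density subset of all tuples, via Proposition~\ref{prop:betagammadelta}.

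\textbf{The two non-oscillating terms.} Rather than targeting a single prescribed sign pattern, the paper expands the solubility indicator directly (Lemma~\ref{lem:chatel34}(ii)): the term with $(S_1,S_2,S_3)=(\emptyset,\emptyset,\emptyset)$ contributes $1$, and the term with $(S_1,S_2,S_3)=([n_1],[n_2],[n_3])$ \emph{also} contributes $1$ by the Hilbert product formula, giving main term $2\cdot 2^{-n}$. Crucially, it is only after removing \emph{both} extremal terms that every remaining $T_{\b S,\b P}(x)$ contains a genuinely bilinear symbol $\bigl(\tfrac{P_h(m)}{P_k(m)}\bigr)$ to which Heath-Brown applies (this is the case analysis at the end of the proof of Proposition~\ref{sunset}). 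Your ``combinatorial check'' is effectively this product-formula step, but phrased as selecting a compatible pattern; the paper's formulation is cleaner and makes it transparent why no non-trivial term escapes the oscillation bound.
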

By \cite[Thm.~1.4]{BBL} (see also \cite[Thm.~1.3]{LS}) 
in a dominant, everywhere locally solvable family of quasi-projective varieties
over an affine space such that the fibres at the points of codimension $1$ are split and enough 
real fibres have real points, a positive proportion of 
rational fibres are everywhere locally solvable. Thus, the results of Theorems \ref{B} and \ref{TTT} 
are expected consequences of
a conjecture of Colliot-Th\'el\`ene which predicts that the Hasse principle for
rational points on smooth, projective, geometrically rational varieties is controlled by the Brauer--Manin obstruction,
and generic triviality of the Brauer group in our families.
(Note that in these cases Colliot-Th\'el\`ene's conjecture follows from Schinzel's Hypothesis (H),
see \cite[Thm.~14.2.4]{CTS21}.)
A known non-trivial case of this conjecture for conic bundles
(\ref{cb}) is when the total degrees of coefficients are $(2,2,0)$; natural smooth projective models of
such surfaces are del Pezzo surfaces of degree 4 for which the result is due to
Colliot-Th\'el\`ene \cite{CT90}. 
The question is open already in
the case of total degrees $(2,2,2)$, which
corresponds to a particular kind of del Pezzo surfaces of degree $2$ (cf.~\cite[Prop.~5.2]{BMS}).
The conjecture for smooth projective varieties birationally
equivalent to (\ref{norm-eq}) is known when
$\deg(P(t))\leq 4$ (and in some cases when $\deg(P(t))=6$) and $[K:\Q]=2$ 
(Colliot-Th\'el\`ene, Sansuc and Swinnerton-Dyer \cite{CTSS87}, \cite{SD}, see
\cite[\S 7.2, \S 7.4]{Sk01}),
$\deg(P(t))\leq 3$ and $[K:\Q]=3$ (Colliot-Th\'el\`ene and Salberger \cite{CSal89}),
$\deg(P(t))\leq 2$ and $[K:\Q]$ arbitrary \cite{HBS02, CTHS03, BH12, DSW12}.
 There seem to be no known unconditional results about the Hasse principle
when the number of degenerate fibres is greater than 6. 
In contrast, for our statistical approach to the existence of rational points 
the number of degenerate fibres is immaterial.

\medskip

In the rest of the introduction we give more details about our main analytic results;
for this we need to introduce some more notation.
We write $P>0$ to denote that the leading coefficient of $P(t)$ is positive.
For a  prime $\ell$ and a polynomial $P(t) \in \F_\ell[t]$ we define
\[
Z_P(\ell )
:=\#
\l
\{ s \in \F_\ell: P(s)=0\r
\}.
 \]
In particular, $\b P$ is a Schinzel $n$-tuple if and only if 
$Z_{P_1\ldots P_n}(\ell)\neq\ell$ 
for all primes $\ell$
and $P_i>0$ for each $i=1,\ldots,n$. 
Fix integers $n_0$ and $M$, and polynomials
$Q_i(t)\in\Z[t]$ of degree at most $d_i$  for $i=1,\ldots, n$
such that $\prod_{i=1}^n Q_i(n_0)$ and $M$ are coprime.
For $H \geq 1$ define
\[
\texttt{Poly}(H):=
\l\{\b P \in (\Z[t])^n :\,
|\b P|\leq H,
\deg(P_i)=d_i,
P_i>0,
P_i \equiv Q_i \md{M}
\text{ for } i=1,\ldots,n\r\}.\]  

\subsection*{The least prime represented by a polynomial}
For   $ C>0$ define
\[
S_C(\b P):=
\{m \in \N : m\leq(\log |\b P|)^C, m\equiv n_0 \md{M}, P_i(m) \text{\ is prime for}\ i=1,\ldots, n \}
.\] 
Theorem \ref{cor:almostal} is an immediate consequence of the following more precise quantitative result.
\begin{theorem}
\label{cool}
Fix $A>0$. In the assumptions of Theorem \ref{cor:almostal}
for all $H\geq 3 $ we have  {\rm
\begin{equation} 
\hspace{-0,2cm}
 \frac{
\#\{
\b P\in \text{\texttt{Poly}}(H)
: \b P \text{\rm \ is Schinzel}, 
\, \#S_{n+A}(\b P) \geq (\log |\b P|)^{A/3}  
\}}
{
\#\{
\b P\in \text{\texttt{Poly}}(H)
: \b P \text{\rm \  is Schinzel}
\}}
\!=\!1+O\!\l(\frac{(\log \log \log H )^{d-n}}{ \sqrt{\log \log H } }
\r),   \label{eq:mahlersymphny5}  
\end{equation} }
where $d=d_1+\ldots+d_n$. The implied constant depends on $d$, $A$ and~$M$, but not on $H$.
\end{theorem}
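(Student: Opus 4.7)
The plan is to prove Theorem~\ref{cool} by the second moment method applied to the random variable $X(\b P) := \#S_{n+A}(\b P)$, defined on the finite probability space of Schinzel $n$-tuples in $\texttt{Poly}(H)$ endowed with the uniform measure. The strategy is in two steps: first, compute the mean $\mathbb{E}[X]$ and show it is of order $(\log H)^A$; second, bound the variance by showing $\mathrm{Var}(X) \ll \mathbb{E}[X]^2\cdot (\log\log\log H)^{d-n}/\sqrt{\log\log H}$. Since the threshold $(\log H)^{A/3}$ is much smaller than the mean, Chebyshev's inequality then produces \eqref{eq:mahlersymphny5}: for any $H$ large enough that $(\log H)^{A/3} < \mathbb{E}[X]/2$,
\[\Pr\!\l(X < (\log H)^{A/3}\r) \leq \Pr\!\l(|X - \mathbb{E}[X]| > \mathbb{E}[X]/2\r) \leq \frac{4\,\mathrm{Var}(X)}{\mathbb{E}[X]^2},\]
which gives the claimed error term.

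For the first moment, swap the order of summation to write
\[\sum_{\b P \text{ Schinzel}} X(\b P) = \sum_{\substack{m \leq (\log H)^{n+A}\\ m \equiv n_0 \md{M}}} \#\l\{\b P \in \texttt{Poly}(H) : \b P \text{ Schinzel},\ P_i(m) \text{ prime for }i=1,\ldots,n\r\}.\]
For fixed $m$, the inner count is estimated by freezing all coefficients of each $P_i$ except its constant term $a_{i,0}$ and counting primes in the arithmetic progression $P_i(m) = a_{i,0} + (\text{known})$, using the Siegel--Walfisz theorem with modulus built from $M$ and the Schinzel local conditions at primes $\ell\leq (\log H)^{n+A}$. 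Averaging over the remaining coefficients and over $m$ should yield $\mathbb{E}[X] \asymp (\log H)^A$, the implicit constant being a positive singular series of local densities encoding the condition $Z_{\prod P_i}(\ell) < \ell$.

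For the second moment, the analogous decomposition is
\[\mathbb{E}[X^2] = \mathbb{E}[X] + \frac{1}{N}\!\!\sum_{m_1 \neq m_2}\#\l\{\b P \text{ Schinzel}: P_i(m_j)\text{ prime for }i\leq n,\ j=1,2\r\},\]
where $N$ is the number of Schinzel tuples in $\texttt{Poly}(H)$. The off-diagonal count involves $2n$ simultaneous primality conditions; after averaging over the coefficients these are approximately independent, up to a local correction supported at primes $\ell\mid m_1-m_2$. Using the Schinzel hypothesis $Z_{\prod P_i}(\ell) < \ell$ to control the local factors via a uniform Euler product estimate, I expect the off-diagonal contribution to equal $(1+o(1))\mathbb{E}[X]^2$ with the stated relative error, completing the Chebyshev bound.

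The main obstacle will be precisely this uniform control of the joint singular series in $m_1, m_2$, since any naive Bateman--Horn heuristic only delivers predictions rather than honest upper bounds. One needs sharp Euler product estimates over primes $\ell \mid m_1 - m_2$, uniformly in $m_1, m_2 \leq (\log H)^{n+A}$, with the exponent $d-n$ in the error term reflecting the surplus of free coefficients of $\b P$ over the $n$ primality constraints (so that the constant term of each $P_i$ can be used up as a Siegel--Walfisz variable while the remaining $d - n$ coefficients produce the sieve saving). The $1/\sqrt{\log\log H}$ factor should emerge from the Turán--Kubilius-type estimate controlling how often $m_1 - m_2$ has unusually many small prime factors, which is where the Schinzel local condition on $\b P$ must be leveraged quantitatively rather than merely qualitatively.
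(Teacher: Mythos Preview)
Your second-moment plan has a structural gap: the variance of $X(\b P)=\#S_{n+A}(\b P)$ over Schinzel tuples is \emph{not} $o(\mathbb E[X]^2)$, so Chebyshev delivers nothing. The reason is that the ``correct'' size of $X(\b P)$ is not a single global mean but the $\b P$-dependent quantity $\mathfrak S_{\b P}(x)\cdot x/(\log H)^n$ (up to harmless factors), and the singular series $\mathfrak S_{\b P}(x)$ genuinely fluctuates: over $\texttt{Poly}(H)$ one has $\mathbb E[\mathfrak S_{\b P}(x)^2]/\mathbb E[\mathfrak S_{\b P}(x)]^2\to\prod_{\ell}\gamma_n(\ell)>1$ (this is Lemma~\ref{lem:secomom} together with the first-moment computation after Lemma~\ref{lem:corolhaha}). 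Hence $\mathrm{Var}(X)\asymp\mathbb E[X]^2$, and your claimed bound $\mathrm{Var}(X)\ll\mathbb E[X]^2(\log\log\log H)^{d-n}/\sqrt{\log\log H}$ is false. Your off-diagonal calculation will indeed give $\mathbb E[X^2]=(1+o(1))\,c\,\mathbb E[X]^2$ for some constant $c>1$, not $c=1$.

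The paper avoids this by centring not at the global mean but at the $\b P$-dependent target: it bounds the \emph{first} moment of $|\theta_{\b P}(x)-\mathfrak S_{\b P}(x)x|$ (via Cauchy--Schwarz from a second moment of this \emph{error}, Theorem~\ref{thm:almostal}), then applies Markov's inequality (Lemma~\ref{lem:mahlersymph3}) with threshold $\epsilon(x)x$. The Schinzel condition enters only afterwards, through the \emph{deterministic} lower bound $\mathfrak S_{\b P}(x)\geq\beta_0(\log\log x)^{n-d}$ (Lemma~\ref{lem:beta0}), which is where the factor $(\log\log\log H)^{d-n}$ actually originates; the $(\log\log H)^{-1/2}$ comes from $\c R(x,H)\ll x/\sqrt{\log x}$ with $x$ a power of $\log H$. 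Your attributions of these factors to Tur\'an--Kubilius and to a coefficient-count surplus are both off the mark. To salvage your approach you would need to replace $X$ by $X(\b P)-\mathfrak S_{\b P}(x)x/(\log H)^n$ (or work with the log-weighted $\theta_{\b P}$), which brings you back to the paper's dispersion argument.
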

Recall that Linnik's constant is the smallest   $L>0$ such that 
every primitive degree $1$ polynomial $P(x)=qx +a $
with $0<a<q$
represents a prime of size $\ll q^L=|P|^L$.
This subject has rich history, see~\cite[\S 18]{iwa}, for example. 
GRH implies that 
$L\leq 2+\epsilon $ for every $\epsilon>0$
and it is known that   $L\leq 5$, see~\cite{MR3086819}.
Furthermore, one cannot have $L<1$, see~\cite{MR3698292} for  accurate 
lower bounds.
 Theorem~\ref{cool}
shows that the analogue of the Linnik constant for polynomials of given degree is
at most $1+\epsilon$ for every $\epsilon>0$.

\begin{corollary}
\label{cor:linikcosnt}
Let $\epsilon>0$ and fix $d, n_0 , M \in \N$.
For 100\% of 
Bouniakowsky polynomials 
$P$ of  degree $d$ with $\gcd(P(n_0), M)=1$,
there exists a natural number
$  m \leq (\log |P|)^{1+\epsilon}$ such that 
$m\equiv n_0 \md M$
and $P(m)$ is a prime bounded by  
$ |P| (\log |P|)^{d+\epsilon}$.
\end{corollary}
Indeed,  
Theorem \ref{cool} with $n=1$ and $A=\epsilon/(2d)$
shows the existence of a natural number
$m\leq (\log |P|)^{1+\epsilon/(2d)}$
such that $P(m)$ is prime; furthermore,  we have 
 \[P(m) \leq (d+1) |P| m^d \leq (d+1) |P| (\log |P|)^{(1+\epsilon/(2d) )d}
\ll |P| (\log |P|)^{d+\epsilon/2}
\leq 
 |P| (\log |P|)^{d+\epsilon}
 .\]  
These bounds are intimately related to the efficacy of algorithms for factorisation of polynomials, see the work of 
Adleman and Odlyzko \cite{MR717715}, and for finding efficient cryptographic parameters as in the work of Freeman, Scott and Teske \cite [\S 2.1]{MR2578668}. McCurley \cite{MR854146}   has shown that for   certain polynomials the least representable prime has to be rather large. The 
case $d=2 $ of Corollary \ref{cor:linikcosnt}  is closely related to hard questions on the size of class numbers that goes all the way back to Euler; see the survey of Mollin \cite{MR1453656}. 

\subsection*{Smallest height of a rational point}
Bounding the least height of a $\Q$-point on a variety $V$ over $\Q$  
 is a hard problem whose solution implies Hilbert's 10th Problem for $\Q$.  
Amongst the Fano varieties it is only for quadrics 
that the known bound is essentially best possible, which is due to
Cassels \cite{MR69217}. Tschinkel 
gave a  conjecture for the size of the smallest  $\Q$-point \cite[Section 4.16]{MR2498064}.
In this direction we have the following result.
 
\begin{corollary}\label{cor:leastpoint} 
Let $\epsilon>0$, $a\in \Z$, $a\neq 0$, and $d\in \N$.
For a positive proportion of polynomials  $P(t)\in\Z[t]$ of degree $d$, the equation $x^2 -a y^2 =P(t)z^2 $
has a solution $(x, y, z, t) \in \N^4$ with  \[\max\{x,y, z, t\} 
 \leq |a|^{1/2} |P|^{1/2} (\log |P|)^{d/2+\epsilon }
 . \] \end{corollary}

To prove this we first note that 
the density of Bouniakowsky polynomials $P(t)$ of degree $d$
with $P(t)\equiv 1 \md{8a}$ exists and is positive; this is a special case of  Corollary~\ref{densitySch01234}.
Since these $P(t)$   satisfy $\gcd(P(0),8a)=1$, we use    
Corollary \ref {cor:linikcosnt} with 
$n_0=0 $ and $M=8a $ to see that 
for $100\%$  of Bouniakowsky polynomials $P(t)$ of degree $d$
with $P(t)\equiv 1 \md{8a}$ 
there exists a natural number $m\leq (\log |P|)^{1+\epsilon } $ such that
$P(m)$ is
a prime $p$ satisfying $p\leq |P|(\log |P|)^{d+\epsilon}$
and 
$p\equiv P(0)\equiv 1\md {8a}$. Holzer's theorem~\cite{MR35783} states that if  $f_1,f_2,f_3$ 
are square-free pairwise coprime integers, not all of the same sign
and such that $-f_i f_j $ is a quadratic residue modulo $f_k$ for all permutations $\{i,j,k\}=\{1,2,3\}$,
then there exists $(x_1,x_2,x_3) \in \Z^3\setminus\{(0,0,0) \}$ such that 
$\sum_{i=1}^3 f_i x_i^2=0$ and $|x_i|\leq \sqrt{|f_j f_k |}$. 
Writing $a=a_0 b^2$, where $a_0$ is square-free,
 we can apply Holzer's theorem for $f_1=-1, f_2=a_0, f_3=p$. Indeed, if 
$a_0=s2^\pi w$, where $s\in\{\pm1\}$, $\pi \in \{0,1\}$, and $w $ is a positive odd integer, then
the quadratic Jacobi symbols satisfy $$ \bigg(\frac{a_0}{p}\bigg)=
\bigg(\frac{w}{p}\bigg)=\bigg(\frac{p}{w}\bigg)=1,$$ due to $p\equiv 1 \md 8$ and
$p\equiv 1 \md  w $. Thus
$a_0$ is a square modulo $p$. Clearly,
$p$ is a square modulo $a_0$. By Holzer's theorem 
the equation $x^2-a_0 y^2= p z^2$ has a non-zero integer solution $( x_0,  y_0,  z_0 )$
with $\max\{|x_0|, |y_0|, |z_0|\} \leq  (|a_0 | p)^{1/2}$. Then 
 $(x_1,y_1,z_1)=(b x_0, y_0, b z_0 )$ is a non-zero solution of 
$x^2-a y^2= p z^2$ that satisfies $$\max\{| x_1|, | y_1|, | z_1|\} \leq b (|a_0 | p)^{1/2}=  (| a | p)^{1/2}
  \leq |a|^{1/2} |P|^{1/2} (\log |P|)^{d/2+\epsilon }.$$

\subsection*{The Bateman--Horn conjecture}
Theorem \ref{cool} is a corollary of Theorem \ref{thm:almostal} below.
To state it we introduce a prime counting function and a truncated singular series.

\begin{definition}  \label{def:really} 
Let $\b P \in (\Z[t])^n$, let $n_0 \in \Z$, and let $M \in \N$. For $x\geq 1 $ 
define the functions 
\beq
{def:thetP}
{
 \theta_\b P (x )=
\sum_{\substack{ m \in \N \cap [1,x]  \\ 
m\equiv n_0 \md{M}
\\
P_i(m ) \text{ prime for}\, i=1,\ldots,n }}
\prod_{i=1}^n
\log  P_i(m ),
}
\beq
{eq:bachbibaldi}
{
\mathfrak{S}_{\b P}(x)=
\frac{\mathds 1 (\gcd(M, \prod_{i=1}^n P_i(n_0) )  =1)}
{\phi(M)^{n}  }M^{n-1}
\prod_{\substack{ \ell \text{ prime}, \,  \ell \nmid M \\ \ell \leq  \log x  }}  \frac {1-\ell^{-1}Z_{P_1 \ldots P_n } (\ell )} {\l(1-\ell^{-1}\r)^n }
.}
\end{definition} 
The function $\mathfrak{S}_{\b P}(x)$ is a truncated version of the 
Hardy--Littlewood singular series associated to Schinzel's
Hypothesis for the polynomials
 $P_1(n_0+M t), \ldots, P_n(n_0+M t)$, 
see~\cite{BatH}.
The reason for considering 
 $P_i(n_0+Mt )$ instead of $P_i(t)$
is because     $\theta_{\b P}(x)$ involves the  condition
 $m\equiv n_0 \md{M}$. 
A standard argument based on the prime number theorem for number fields  shows that  for a fixed  $\b P$ the product  $\mathfrak S_\b P(x)$  converges 
as $x\to \infty$. However, the convergence is absolute only when each $P_i$ is linear.  Since we treat general polynomials, we have chosen to 
work with the truncated version  to avoid problems related to the lack of absolute convergence. 

The Bateman--Horn conjecture states that
$$\theta_\b P(x)- \mathfrak S_\b P(x) x= o(x).$$
Our next result
shows that  
the estimate 
 \[\theta_\b P(x)- \mathfrak S_\b P(x) x=
O\l(\frac{x}{\sqrt{ \log x } }\r)
 \] holds for $100\%$ of  $\b P\in (\Z[t])^n$ in a certain 
range for $x$. 
  Let  \[ \c R(x, H)= \frac{ 1}{ \#\texttt{Poly}(H)  } 
 \sum_{ \substack{  \b P \in  \texttt{Poly}(H)   } }  
\Big | \theta_\b P(x) -  \mathfrak S_\b P(x)  x\Big|
\] be   the average over all $n$-tuples $\b P$
of the error terms in
the Bateman--Horn conjecture.
 \begin{theorem}
\label
{thm:almostal}
Let $n, d_1, \ldots, d_n, M$ be positive integers. Let $n_0\in\Z$ and let $\b Q=(Q_i(t)) \in (\Z[t])^n$.
Fix arbitrary $A_1, A_2\in \R $ with $n<A_1<A_2$.
Then for all $H\geq 3  $
 and all $ x\geq 3 $ with 
\[
(\log H)^{A_1}<x\leq (\log H )^{A_2}
\]
we have
\[ \c R(x,H)\ll \frac{x}{ \sqrt{\log x } }  ,\]
where the implied constant depends only on $d_1, \ldots, d_n, M, n_0, \b Q, A_1, A_2$. 
\end{theorem}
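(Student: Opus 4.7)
The plan is to deduce the target bound from a variance-type estimate via the Cauchy--Schwarz inequality. Setting $\Delta_{\b P}(x) := \theta_{\b P}(x) - \mathfrak S_{\b P}(x)\, x$, Cauchy--Schwarz yields
\[ \c R(x,H)^2 \leq \frac{1}{\#\texttt{Poly}(H)} \sum_{\b P \in \texttt{Poly}(H)} \Delta_{\b P}(x)^2, \]
so it suffices to prove the second moment bound $\sum_{\b P} \Delta_{\b P}(x)^2 \ll \#\texttt{Poly}(H)\cdot x^2/\log x$. I would compute this by expanding the square, writing $\theta_{\b P}(x)^2$ as a double sum over $(m_1,m_2) \in [1,x]^2$ with $m_j \equiv n_0 \md{M}$, and swapping the resulting sum with the outer sum over $\b P$.

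For fixed $m_1,m_2$ the inner count factorises over $i=1,\ldots,n$, since the polynomials $P_i$ vary independently in their respective height-$H$ boxes. For one polynomial $P$ of degree $d$, positive leading coefficient, height at most $H$, and $P \equiv Q \md{M}$, the evaluation $P \mapsto P(m)$ is a linear form in the $d+1$ coefficients; since $m \leq (\log H)^{A_2}$ is subpolynomial in $H$, this map attains each $n$ in the expected arithmetic progression modulo $M$ with essentially uniform multiplicity $\asymp (H/M)^{d}$, for $n$ in a range of size $\asymp Hm^{d}$. Summing $\log P(m)\cdot \mathds 1(P(m)\text{ prime})$ thus reduces to a sum of $\log p$ over primes $p$ in a single arithmetic progression modulo the fixed modulus $M$, which is evaluated by the Siegel--Walfisz theorem. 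For the squared moment one parametrises instead by the pair $(P(m_1),P(m_2))$: when $m_1\neq m_2$ the associated $2\times(d+1)$ Vandermonde-type system has rank two, so two prime values may be prescribed while $d-1$ coefficients remain free, and Siegel--Walfisz is applied jointly.

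The next task is to recognise the truncated singular series in the main term. A prime $\ell \leq \log x$ with $\ell \nmid M$ enters via the residue class of $\b P$ modulo $\ell$: since $\ell^{\sum(d_i+1)} \leq (\log H)^{O(1)} \ll H$, the tuples $\b P \in \texttt{Poly}(H)$ equidistribute modulo $\ell$ up to negligible error, and averaging the local counts in each class reproduces exactly the Euler factor $(1-\ell^{-1}Z_{P_1\ldots P_n}(\ell))/(1-\ell^{-1})^n$ appearing in $\mathfrak S_{\b P}(x)$. The diagonal $m_1=m_2$ in the second moment contributes $O(x(\log x)^{O(1)})$ per $\b P$ and is thus negligible. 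Tuples $\b P$ that fail the Schinzel condition at some small $\ell$ satisfy $\mathfrak S_{\b P}(x)=0$ outright, and $\theta_{\b P}(x)$ is easily bounded by a single-prime obstruction, so their density in $\texttt{Poly}(H)$ is controlled by a short sieve and absorbs into the error.

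The main obstacle will be producing the sharp $1/\log x$ saving in the second moment. This forces the cross term $\sum_{\b P} \theta_{\b P}(x) \mathfrak S_{\b P}(x)\, x$ together with $\sum_{\b P} \mathfrak S_{\b P}(x)^2 x^2$ to cancel the main term of $\sum_{\b P} \theta_{\b P}(x)^2$ to within an error of this order, which in turn requires careful bookkeeping of the local Euler factors at primes $\ell \leq \log x$ identified in the previous step. The hypothesis $x > (\log H)^{A_1}$ with $A_1 > n$ is used so that the expected count $\mathfrak S_{\b P}(x)\, x$, whose typical size is of order $x/(\log x)^{n}$, genuinely dominates the Siegel--Walfisz error, making the variance argument effective.
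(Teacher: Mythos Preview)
Your overall framework—Cauchy–Schwarz reduction to the variance $\c V(x,H)$, expansion into the three terms $\sum\theta^2$, $\sum\mathfrak S\,\theta$, $\sum\mathfrak S^2$, and cancellation of their main terms—is exactly the paper's. The substantive difference is in how the off-diagonal part of $\sum_{\b P}\theta_{\b P}(x)^2$ is handled. After expanding as $\sum_{m_1\neq m_2}\prod_i\c G_{m_1,m_2}(H;d_i)$, the paper evaluates each correlation $\c G_{k,m}(H;d)=\sum_P\Lambda(P(k))\Lambda(P(m))$ via a Fourier identity (writing the sum as an integral against a product of Dirichlet kernels) combined with Davenport's bound $\sup_\alpha\big|\sum_{n\le y}\mu(n)e^{in\alpha}\big|\ll y(\log y)^{-A}$; you instead propose to change variables to $(p_1,p_2)=(P(m_1),P(m_2))$ and invoke Siegel–Walfisz. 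Your route can in principle be made to work, since the relevant modulus $|m_1-m_2|\le(\log H)^{A_2}$ lies in the Siegel–Walfisz range, but it is not as immediate as ``apply Siegel–Walfisz jointly'' suggests: the image of the coefficient box is a genuine parallelogram, not a product of intervals, so the two prime sums do not decouple. One has to slice by $p_1$, apply Siegel–Walfisz to the $p_2$-sum over an interval $J(p_1)$ depending on $p_1$, and then handle the outer $p_1$-sum by partial summation and the prime number theorem. The paper's Fourier approach sidesteps this geometry and gives cleaner uniformity in $m_1,m_2$.

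The real gap is in your final paragraph. You correctly flag that the $(\log x)^{-1}$ saving comes from cancellation between the three main terms, but ``careful bookkeeping of the local Euler factors'' undersells what is needed. The paper devotes a full section to this: it builds a Bernoulli model in which $1-\ell^{-1}Z_{P_1\cdots P_n}(\ell)$ is the mean of $\ell$ pairwise-independent $\{0,1\}$-variables on the space of $n$-tuples of polynomials over $\F_\ell$. Computing first and second moments in this model yields the common Euler factor $\gamma_n(\ell)=1-\ell^{-1}+\ell^{n-1}/(\ell-1)^n$ in all three averages, and it is this identity—not mere equidistribution of $\b P$ modulo $\ell$—that makes the three main terms coincide. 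Your sketch covers the first moment but not the second.

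Two smaller points. Your diagonal bound ``$O(x(\log x)^{O(1)})$ per $\b P$'' is too optimistic: each $\log P_i(m)$ has size $\log H$, not $\log x$, so after averaging over $\b P$ the diagonal is $O(x(\log H)^n H^{d+n})$, and this is exactly why the threshold is $A_1>n$ rather than $A_1>0$. Relatedly, your last sentence misidentifies the role of $A_1>n$: it is not that $\mathfrak S_{\b P}(x)\,x$ must dominate a Siegel–Walfisz error, but that the diagonal contribution $x(\log H)^n H^{d+n}$ must be $o(x^2 H^{d+n}/\log x)$.
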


The necessity of  $ A_1>n  $ is addressed in Remark \ref{rem:lowerbound}; one cannot expect 
  typical polynomials to represent primes when the input is not large compared to the coefficients, 
and $m\approx (\log |\b P | )^n$ seems to be a natural barrier.

From Theorem \ref{thm:almostal}
and Markov's inequality one immediately deduces a  form of the Bateman--Horn conjecture valid for almost all polynomials.
For simplicity we state this result only in the case  $n=M=n_0=1$.

\begin{corollary}
\label{cor:mark} Let $d $ be a positive integer. 
Fix any $c\in \R$ with  $0<c<1/2$ and any $A_1, A_2\in \R $ with $1<A_1<A_2$.
Then for all irreducible $P \in  \Z[t] $ with $\deg(P)=d $ and   all $ x  $ with 
$
(\log |  P| )^{A_1}<x\leq (\log |   P| )^{A_2}
$
we have
\[ \sum_{\substack{ m\in \N \cap [1,x] \\ P(m) \textrm{\rm \ prime}  } } \log P(m) = 
\l( \prod_{\substack{ \ell \text{\rm \ prime} \\ \ell \leq  \log x  }}  \frac {1-\ell^{-1}Z_{P  } (\ell )} { 1-\ell^{-1}  }
\r)   x  +O\l( \frac{x}{ (\log x )^{c} } \r) ,\] with the exception of at most   $O(H^{d+1 } (\log  \log H)^{c-1/2})$ of polynomials   $P$ such that $|P|\leq  H$. 
\end{corollary}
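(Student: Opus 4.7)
The plan is to derive this as a direct consequence of Theorem \ref{thm:almostal} via Markov's inequality; indeed, the statement is essentially the passage from an $L^1$-mean bound over the polynomial family to a pointwise bound off a small exceptional set.

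First I would unpack the specialisation. With $n=M=n_0=1$ and $\b Q=1$, the set $\texttt{Poly}(H)$ consists of all positive-leading integer polynomials of degree $d$ with height at most $H$, so $\#\texttt{Poly}(H)\asymp H^{d+1}$. The indicator $\mathds 1(\gcd(M,P(n_0))=1)$ is automatically $1$, so the truncated singular series simplifies to the expression appearing in the statement. Theorem \ref{thm:almostal} then gives
\[
\frac{1}{\#\texttt{Poly}(H)}\sum_{P\in\texttt{Poly}(H)}\bigl|\theta_P(x)-\mathfrak S_P(x)\,x\bigr|\ll \frac{x}{\sqrt{\log x}}
\]
for all $x$ in the range $(\log H)^{A_1}<x\leq (\log H)^{A_2}$.

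Next I would apply Markov's inequality with threshold $\lambda=x/(\log x)^c$. For each such $x$, the number of $P\in\texttt{Poly}(H)$ violating the displayed estimate of the corollary is at most
\[
\#\texttt{Poly}(H)\cdot \frac{\c R(x,H)}{\lambda}\ll H^{d+1}\cdot\frac{x/\sqrt{\log x}}{x/(\log x)^{c}}=H^{d+1}(\log x)^{c-1/2}.
\]
Using the lower bound $\log x\geq A_1\log\log H$ from the range of $x$, together with the assumption $c<1/2$ (so that $y\mapsto y^{c-1/2}$ is decreasing in $y$), this is bounded by $O(H^{d+1}(\log\log H)^{c-1/2})$, which is the claimed size of the exceptional set.

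Finally, I would remark that restricting to irreducible $P$ only shrinks the set under consideration (reducible $P$'s of degree $d$ form a set of size $O(H^d\log H)$, negligible compared to the main count), so the same estimate holds after this restriction. I do not anticipate a main obstacle: the only genuine content has already been used, namely the $L^1$-average in Theorem \ref{thm:almostal}, and Markov together with the crude bound on $\#\texttt{Poly}(H)$ finishes the argument. One minor subtlety worth flagging is that the exceptional set here depends on $x$; to obtain a single exceptional set uniform in $x$ one would discretise the interval $[(\log H)^{A_1},(\log H)^{A_2}]$ into $O(\log\log H)$ points and take a union, at the cost of replacing the exponent $c-1/2$ by $c+1/2$, but this uniform version is not what the corollary asserts.
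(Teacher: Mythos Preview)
Your proposal is correct and takes essentially the same approach as the paper: the authors state only that the corollary follows ``from Theorem~\ref{thm:almostal} and Markov's inequality'', and you have carried this out in the intended way, including the specialisation $n=M=n_0=1$, the bound $\#\texttt{Poly}(H)\asymp H^{d+1}$, and the observation that $(\log x)^{c-1/2}\ll(\log\log H)^{c-1/2}$ because $c<1/2$. Your remark on the $x$-dependence of the exceptional set is a fair reading of the statement and is not addressed further in the paper.
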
 The asymptotic is meaningful, since $\mathfrak S_  P(x) \gg (\log \log x)^{1-d  }$ if $\mathfrak S_P(x) \neq 0 $, see Lemma \ref{lem:beta0}.  
 
\subsection*{Comparison with the literature} 
Our main result, Theorem \ref{thm:almostal}, is a vast generalisation of the well-known 
Barban--Davenport--Halberstam theorem on primes in arithmetic progressions,
which gives a bound on 
\[
\sum_{\substack{  
1\leq q \leq  Q   \\ a \in (\Z/q\Z )^* 
 }  } \left( \sum_{\substack{{\rm prime} \, p\leq X \\ p\equiv  a \md{q}   } } \log p  - \frac{X}{\phi(q)} \right)^2
.\] To bring it to a form comparable to
  Theorem \ref{thm:almostal} 
we write   $H=Q$,
$x=X/Q$  and   
 $P(t) =a +q t $, from which  it   becomes  evident that 
 the left hand side is essentially equal to 
\[
\sum_{ \substack{ P \in \Z[t]: \ \deg(P)=1  \\ | P | \leq   H        }}
 \left( \sum_{\substack{m \leq x \\  P(m) \text{ prime }  } } \log P(m)   - \mathfrak S_P(x)  x \right)^2 
.\]
While the Barban--Davenport--Halberstam theorem concerns a single 
linear polynomial, our work covers an arbitrary number of polynomials, each of arbitrary degree.
Prior to our paper there has been a 
number of results  on averaged forms of Bateman--Horn  for  special polynomials.

\begin{center}
    \begin{tabular}{| l | l | l | l |}
    \hline
    $n$ &     $  P_1(t), \ldots,  P_n(t)  $  & Authors \\ \hline
$\geq 1 $   & $  t + b_1, \ldots,  t + b_n   $ & Lavrik \cite{lavrik} \\ \hline
   $2$   & $t,t+b$ &  Lavrik \cite{MR0159801},    Mikawa \cite{MR1118579}, Wolke \cite{MR990587}  \\ \hline
       $1$   & $a t+ b $ & Barban\cite{barb}, Davenport--Halberstam \cite{davenhalb}        \\ \hline
     $\geq 1 $   & $ a_1 t + b_1, \ldots, a_n t + b_n   $ & Balog \cite {MR1084173} \\ \hline
    $1$   & $t^d+a t + b$  &  Friedlander--Granville   \cite{MR1133852} \\ \hline
    $1$   & $t^2+t + b$ and $t^2+b$&  Granville--Mollin \cite{MR1814449} \\ \hline
    $1$   & $t^2+b $ & Baier--Zhao \cite{MR2317757, MR2569742}        \\ \hline
     $1$   &  $t^3+b $  &  Foo--Zhao \cite{primecube} \\ \hline
 $1$   & $t^4+b $ & Yau \cite{primequarti}   \\ \hline
    $1$   & $t^d+b $ &  Zhou \cite{MR3831402}  \\ \hline
      \end{tabular}
\end{center}    
The work of 
Friedlander--Granville \cite{MR1133852}
has special interest in connection to our work 
as it shows that there are unexpectedly large fluctuations in the error term of the 
 Bateman--Horn asymptotic; it would be interesting to understand analogous 
questions  in the  setting of  Corollary \ref{cor:mark}.
Furthermore, it would be interesting 
to investigate the case where one ranges over degree $d$
polynomials with a fixed coefficient; this corresponds to work of Friedlander--Goldston \cite{friedgold}
where this is investigated for linear polynomials with fixed leading coefficient.
 \subsection*{Method of proof} 
Theorem \ref{thm:almostal} 
is a generalisation of Montgomery's proof of 
the Barban--Davenport--Halberstam theorem, which corresponds to the case $n=1 $ and $d_1 =1 $ of Theorem \ref{thm:almostal}.
By Cauchy--Schwarz  we have 
\beq{eq:aha2}{\c R(x,H)^2\leq \c V(x, H):= \frac{ 1}{ \#\texttt{Poly}(H)}\sum_{ \substack{  \b P \in  \texttt{Poly}(H)   } }  
\l( \theta_\b P(x) -  \mathfrak S_\b P(x)  x\r)^2 , } which is the kind of second moment function studied in the BDH theorem.   
The original proof of the BDH theorem  is a direct application of the large sieve; such an approach  only  applies 
to     polynomials of very special shape, see \cite{MR2317757,primecube}.
The initial arguments in our paper are in fact  closer to     Montgomery's proof of 
the BDH theorem \cite{montg}, which does not rely on the large sieve.

First, we open up the square in $\c V(x, H)$ to get 
three terms: the second moments $\theta_\b P(x)^2$ and $x^2\mathfrak S_\b P(x)^2$, 
and the correlation $x\mathfrak S_\b P(x)\theta_\b P(x)$. 
The hardest term is $\theta_\b P(x)^2$ and here Montgomery's approach  relies exclusively on 
Lavrik's result on twin primes \cite{MR0159801, lavrik}. Lavrik's argument makes heavy use 
of      the Hardy--Littlewood  circle method and   Vinogradov's estimates of exponential sums.
In our work we need a suitable generalisation of   Lavrik's result; this is provided by our   Theorem~\ref{thm:mainresulthalb}.
It produces an asymptotic for simultaneous prime values of two linear polynomials in an arbitrary number of variables,
where the error term is uniform in the size of the coefficients.
The  difference between our work and that of  Montgomery and Lavrik
  is that to prove Theorem~\ref{thm:mainresulthalb}
we  do not use the circle method 
and we instead  employ the
\textit{M\"obius randomness law}, see Section \ref{s:mob}. 
This approach in the area of the averaged Bateman--Horn conjecture is new.

Next, we show that
the three principal terms
cancel out by constructing a probability space that models the behaviour of functions 
involving $Z$, see Section \ref{s:probabilisticmodel}. 
This task inevitably leads to new
complications of combinatorial nature, compared to the   aforementioned papers on special
polynomials  where   the  Bateman--Horn singular series has a useful expression in terms of $L$-functions (see \cite{MR2317757,primecube}, for example).
The final stages of the proof of Theorem~\ref{thm:almostal} 
can be found in \S \ref{s:asintheothersection} and that of Theorem \ref{cool} in \S\ref{s:kimamaior8ios}.  

Applications to rational points, including the proofs of Theorems \ref{B} and \ref{TTT}, 
can be found in Sections \ref{rat} and \ref{rat2}.

\subsection*{Notation}
The quantities $A_1, A_2 , \delta_1, \delta_2 , n, d_1,\ldots, d_n, \b Q, n_0, M, $
will be considered constant throughout. In particular, the dependence of 
 implied constants in the big $O$ notation on these 
quantities will not be recorded. 
Any other dependencies of the implied constants on further parameters will be explicitly specified via the use of a subscript.
Whenever we use iterated logarithm functions $\log t, \log \log t$, etc., we assume 
that $t$ is   large enough  to make  the iterated logarithm well-defined. 

\subsection*{Acknowledgement}
The work on this paper started during the research trimester 
``\`A  la red\' ecouverte des points rationnels'' at
the Institut Henri Poincar\'e in Paris, whose support is gratefully acknowledged.
The authors would like to thank Andrew Granville for his interest in this paper, his enthusiasm
and useful discussions. 
We are very grateful to the referee for careful reading of the paper and many helpful comments.

\section{Bernoulli models   of    Euler factors } \label{s:probabilisticmodel}

In this section we study the $\ell$-factor  $1-\ell^{-1}Z_{P_1 \ldots P_n } (\ell)$
of the Euler product~\eqref{eq:bachbibaldi}.
We prove that if $P_1, \ldots, P_n$ are random
polynomials of bounded degree in $\F_\ell[t]$,
this factor is modelled by the arithmetic mean of $\ell$ 
pairwise independent, identically distributed Bernoulli random variables 
defined on a product of probability spaces.   
The results of this section are 
used in Section~\ref{e:estimatingthevar} to prove cancellation of principal terms.
Proposition~\ref{prop:betagammadelta} is used to prove 
Theorem \ref{cool} in  \S\ref{s:kimamaior8ios}.

\subsection{Bernoulli model}
\label
{s:bernoulli} Let $\ell$ be a prime.
Consider the probability space $(\Omega(d),\PP)$, where
\[\Omega(d):=\{
P \in \F_\ell[t]: \deg(P)\leq d\}\]
and $\PP$ is the uniform discrete probability. For every $m \in \F_\ell$
we define the Bernoulli random variable 
$Y_m:\Omega(d)\to \{0,1\}$ by
\[Y_m = \begin{cases}  1, &\mbox{if } P(m) \neq  0 \text{ in } \F_\ell, \\ 
0, & \mbox{otherwise. }   \end{cases}  \]
It is clear that $Y_m=\chi(P(m))$, where
$\chi$ is the principal Dirichlet character on $\F_\ell$.

\begin{lemma} \label{april}
Let $\c J\subset\F_\ell $ be a subset
of cardinality $s\leq d+1$. Then the variables $Y_m$ for $m\in \c J$
are independent, and we have \[
\mathbb E_{\Omega(d)}\prod_{m\in \c J} Y_m
=\prod_{m\in \c J} 
\mathbb E_{\Omega(d)} Y_m=(1-\ell^{-1})^s.\]  
\end{lemma}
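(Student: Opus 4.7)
The plan is to reduce the statement to a standard fact about evaluation maps on polynomial spaces of bounded degree.

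First I would observe that $\Omega(d)$ is just the $\mathbb{F}_\ell$-vector space of polynomials of degree $\leq d$, so $|\Omega(d)| = \ell^{d+1}$. Enumerate $\mathcal{J} = \{m_1, \ldots, m_s\}$ with $s \leq d+1$, and consider the evaluation map
\[
\mathrm{ev}_\mathcal{J} \colon \Omega(d) \longrightarrow \mathbb{F}_\ell^{s}, \qquad P \longmapsto (P(m_1), \ldots, P(m_s)).
\]
This is $\mathbb{F}_\ell$-linear. Because the $m_i$ are distinct and $s-1 \leq d$, Lagrange interpolation (equivalently, the non-vanishing of the Vandermonde determinant on $m_1,\ldots,m_s$) shows that $\mathrm{ev}_\mathcal{J}$ is surjective: for any $(a_1,\ldots,a_s) \in \mathbb{F}_\ell^s$ there is a polynomial of degree $\leq s-1 \leq d$ taking those prescribed values.

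Surjectivity plus linearity forces every fibre of $\mathrm{ev}_\mathcal{J}$ to have the same cardinality $\ell^{d+1}/\ell^s = \ell^{d+1-s}$. Hence, under the uniform measure $\mathbb{P}$ on $\Omega(d)$, the random vector $(P(m_1),\ldots,P(m_s))$ is uniformly distributed on $\mathbb{F}_\ell^s$. In particular the coordinates $P(m_1),\ldots,P(m_s)$ are mutually independent, each uniform on $\mathbb{F}_\ell$.

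Since $Y_m = \chi(P(m)) = \mathbf{1}_{P(m) \neq 0}$ is a (measurable) function of $P(m)$ alone, the variables $\{Y_m : m \in \mathcal{J}\}$ inherit mutual independence. Each has $\mathbb{E}_{\Omega(d)} Y_m = \mathbb{P}(P(m) \neq 0) = 1 - \ell^{-1}$, so
\[
\mathbb{E}_{\Omega(d)} \prod_{m \in \mathcal{J}} Y_m \;=\; \prod_{m \in \mathcal{J}} \mathbb{E}_{\Omega(d)} Y_m \;=\; (1-\ell^{-1})^s,
\]
as claimed. There is no real obstacle here; the only step that genuinely uses the hypothesis $s \leq d+1$ is the surjectivity of $\mathrm{ev}_\mathcal{J}$, and this is precisely where the bound on $|\mathcal{J}|$ is sharp (if $s = d+2$, the image of $\mathrm{ev}_\mathcal{J}$ is a proper hyperplane and independence fails).
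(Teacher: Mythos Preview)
Your proof is correct and uses essentially the same idea as the paper: both arguments rest on the non-vanishing of the Vandermonde determinant (equivalently, Lagrange interpolation) to show that the evaluation map $\Omega(d)\to\F_\ell^s$ has full rank when $s\leq d+1$. The paper phrases this by counting the fibre over $\mathbf{0}$ as a codimension-$s$ subspace, obtaining $\mathbb E\prod_{m\in\c J}(1-Y_m)=\ell^{-s}$, while you argue surjectivity directly and deduce that the pushforward to $\F_\ell^s$ is uniform; the two are equivalent.
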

\begin{proof}
It is enough to prove that
\begin{equation} \label{8apr}
\mathbb E_{\Omega(d)}\prod_{m\in \c J} (1-Y_m)
=\frac{1}{\ell^{d+1}}\,\#\l\{P\in \F_\ell[t]: \deg(P)\leq d, P(m)=0\ 
\text{\rm if}\ m \in \c J\r\}=\frac{1}{\ell^s}.
\end{equation}
By the non-vanishing of the Vandermonde determinant this condition 
describes an $\F_\ell$-vector subspace of $\Omega(d)$ of codimension $s$,
hence the result.
\end{proof}

Let $n \in \N$ and let $d_1, \ldots, d_n \in \N$. 
Consider $\Omega=\Omega(d_1) \times \ldots \times\Omega(d_n)$ as a
Cartesian probability space 
equipped with the product measure 
\beq
{eq:mumumu}
{
\PP(A_1 \times \ldots \times  A_n ):=
\PP_1(A_1) \ldots  \PP_n(A_n),
\
\text{ for all } 
\
A_i \subseteq \Omega(d_i),
}
where each $\PP_i$ is the uniform discrete probability on $\Omega(d_i)$.
For $m \in \F_\ell$ define the Bernoulli random variable 
$X_m:\Omega\to \{0,1\}$
 by
\[X_m = \begin{cases}  1, &\mbox{if } \prod_{i=1}^n P_i(m) \neq  0 \text{ in } \F_\ell, \\ 
0, & \mbox{otherwise. }   \end{cases}  \]
It is clear that  
\beq{eq:oktelos}{X_1 +\ldots + X_\ell=\ell-
Z_{P_1 \ldots P_n } (\ell ).}

\begin
{lemma}
\label
{lem:thisissufficiehdfpantelonint}
For all $m \in \F_\ell $ we have $\mathbb E_{\Omega}X_m=(1-\ell^{-1})^n $. 
\end
{lemma}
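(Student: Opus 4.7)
The plan is to express $X_m$ as a product of the single-component Bernoulli variables from Lemma \ref{april}, and then use independence across the Cartesian factors together with the $s=1$ case of that lemma.

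More precisely, for each $i=1,\ldots,n$, denote by $Y_m^{(i)}:\Omega(d_i)\to\{0,1\}$ the Bernoulli variable constructed in Section \ref{s:bernoulli} for the single-polynomial space $\Omega(d_i)$, so that $Y_m^{(i)}(P_i)=1$ exactly when $P_i(m)\neq 0$ in $\F_\ell$. By definition of $X_m$, we have the factorisation
\[
X_m(P_1,\ldots,P_n)=\prod_{i=1}^n Y_m^{(i)}(P_i),
\]
since $\prod_{i=1}^n P_i(m)\neq 0$ in $\F_\ell$ is equivalent to $P_i(m)\neq 0$ for every $i$. Each factor $Y_m^{(i)}$ depends only on the $i$-th coordinate of $\Omega$.

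Because $\PP$ is the product measure \eqref{eq:mumumu} of the uniform discrete probabilities $\PP_i$ on $\Omega(d_i)$, the coordinate projections are independent, and hence the random variables $Y_m^{(1)},\ldots,Y_m^{(n)}$ are independent on $\Omega$. Therefore
\[
\mathbb E_{\Omega} X_m=\prod_{i=1}^n \mathbb E_{\Omega(d_i)} Y_m^{(i)}.
\]
Applying Lemma \ref{april} with the singleton $\c J=\{m\}$ (so $s=1$, which satisfies $s\leq d_i+1$ since $d_i\geq 1$) to each factor gives $\mathbb E_{\Omega(d_i)}Y_m^{(i)}=1-\ell^{-1}$, and multiplying yields $(1-\ell^{-1})^n$.

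There is essentially no obstacle here: the only thing to check is that the factorisation $X_m=\prod_i Y_m^{(i)}$ is valid (immediate from the definitions) and that the $s=1$ instance of Lemma \ref{april} is applicable, which requires $d_i\geq 0$, a hypothesis already present. The main conceptual point is simply that the Cartesian product structure of $\Omega$ turns the multiplicative event $\{\prod_i P_i(m)\neq 0\}$ into a product of independent single-polynomial events.
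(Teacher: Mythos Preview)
Your proof is correct and is exactly the unpacking of the paper's one-line proof ``This is immediate from Lemma~\ref{april}'': you factor $X_m=\prod_i Y_m^{(i)}$, use the product measure to separate the expectation, and apply the $s=1$ case of Lemma~\ref{april} to each factor. There is no substantive difference in approach.
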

\begin{proof}  This is immediate from Lemma \ref{april}.
\end{proof}
\begin
{lemma}
\label
{lem:thisissufficient}
For all $k\neq m \in \F_\ell$ the random variables $X_k $ and $X_m $ are independent. 
\end
{lemma}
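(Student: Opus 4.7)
The plan is to exploit the product structure of $(\Omega,\PP)$ and reduce to Lemma~\ref{april} coordinate-by-coordinate. For each $i \in \{1,\ldots,n\}$ and $m \in \F_\ell$, let $Y_m^{(i)}\colon \Omega(d_i) \to \{0,1\}$ be the Bernoulli variable of Lemma~\ref{april}, so $Y_m^{(i)}(P_i) = 1$ iff $P_i(m) \neq 0$ in $\F_\ell$. Regarding each $Y_m^{(i)}$ as a function on $\Omega$ depending only on its $i$-th coordinate, one has the factorisation $X_m = \prod_{i=1}^n Y_m^{(i)}$.

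To prove the independence of $X_k$ and $X_m$ for $k \neq m$, since both are $\{0,1\}$-valued it suffices to verify the single identity $\mathbb E_\Omega[X_k X_m] = \mathbb E_\Omega[X_k]\,\mathbb E_\Omega[X_m]$; the remaining three joint probabilities then factor automatically. The computation proceeds as
\[
\mathbb E_\Omega\!\left[\prod_{i=1}^n Y_k^{(i)} Y_m^{(i)}\right] = \prod_{i=1}^n \mathbb E_{\Omega(d_i)}\!\left[Y_k^{(i)} Y_m^{(i)}\right] = (1-\ell^{-1})^{2n},
\]
where the first equality uses the product-measure definition \eqref{eq:mumumu} (the factors depend on disjoint coordinates), and the second applies Lemma~\ref{april} to the set $\c J = \{k,m\}$ of cardinality $s = 2$, the hypothesis $s \leq d_i + 1$ being satisfied since the $d_i$ are positive integers. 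Combined with Lemma~\ref{lem:thisissufficiehdfpantelonint}, this gives $\mathbb E_\Omega[X_k X_m] = (1-\ell^{-1})^{2n} = \mathbb E_\Omega[X_k]\,\mathbb E_\Omega[X_m]$, as required.

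There is no real obstacle here: the only point to monitor is the bound $s \leq d_i+1$ in Lemma~\ref{april}, which is comfortably met. It is worth emphasising that the conclusion is genuinely pairwise and not mutual --- indeed the $X_m$ fail to be mutually independent once $\ell > \min_i(d_i+1)$, since Lemma~\ref{april} itself breaks down for $s > d+1$ --- but pairwise independence is what is needed for the variance-style manipulations of \S\ref{e:estimatingthevar}.
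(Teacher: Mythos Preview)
Your proof is correct and follows essentially the same route as the paper's: both observe that for Bernoulli variables it suffices to check uncorrelatedness, then factor $\mathbb E_\Omega[X_k X_m]$ over the product space via~\eqref{eq:mumumu} and apply Lemma~\ref{april} with $s=2\leq d_i+1$, comparing against Lemma~\ref{lem:thisissufficiehdfpantelonint}. Your closing remark on pairwise versus mutual independence is a welcome addition not present in the paper's proof.
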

\begin{proof} 
Since $X_k$ and $X_m $ are 
 Bernoulli random variables, it suffices to show that they are uncorrelated. 
Using Lemma \ref{lem:thisissufficiehdfpantelonint} we write 
the covariance of $X_k$ and $X_m $ as
\[
\mathbb E_{\Omega}
\l[
\l(\prod_{i=1}^n 
\chi(P_i(m) )
- \l(1-\ell^{-1}\r)^n
\r)
\l(
\prod_{j=1}^n 
\chi(P_j(k) ) 
- \l(1-\ell^{-1}\r)^n
\r)
\r]
,
\]
which equals
\[
\mathbb E_{\Omega}
\l[
  \prod_{i=1}^n 
\chi(P_i(m ) ) \chi(P_i(k) )
\r]
- \l(1-\ell^{-1}\r)^{2n}
\! \! \! =
 \l(  \prod_{i=1}^n 
\mathbb E_{\Omega(d_i)}
\l[
\chi(P(m ) )\chi(P(k) )
\r]
\r)
- \l(1-\ell^{-1}\r)^{2n}
\]
by~\eqref{eq:mumumu}. Since $d_i\geq 1$ for all $i=1,\ldots,n$,
we conclude the proof by applying Lemma \ref{april}.
\end{proof}

For $d, s \in \Z_{\geq 0 }$ define  
\beq{eq:cofeeroastartisan}   {G_\ell(d,s):=\sum_{r=0}^s {s\choose r} \frac{(-1)^r}{\ell^{\min\{r,1+d\}}}    .}

\begin{lemma}\label{lem:bachcellosuite5} 
For a subset $\c J \subset \F_\ell$ of cardinality $s$ we have 
\[ \mathbb E_{\Omega}\prod_{m\in \c J}X_m=\prod_{k=1}^n G_\ell(d_k,s).\] \end{lemma}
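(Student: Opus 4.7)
The plan is to reduce the joint expectation to an $n$-fold product via the independence of the coordinate factors, then compute each factor by inclusion--exclusion, reusing the Vandermonde argument already established in the proof of Lemma~\ref{april}.

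First I would exploit the product structure of $\Omega = \Omega(d_1) \times \cdots \times \Omega(d_n)$. By definition $X_m = \prod_{i=1}^n \chi(P_i(m))$, so
\[
\prod_{m\in \c J} X_m \;=\; \prod_{i=1}^n \Bigl(\prod_{m\in \c J} \chi(P_i(m))\Bigr) \;=\; \prod_{i=1}^n \prod_{m\in \c J} Y_m^{(i)},
\]
where $Y_m^{(i)}$ denotes the random variable $Y_m$ from Section~\ref{s:bernoulli} applied to the $i$-th coordinate. Since $\PP$ is a product measure, this expectation factors:
\[
\mathbb E_{\Omega}\prod_{m\in \c J} X_m \;=\; \prod_{i=1}^n \mathbb E_{\Omega(d_i)} \prod_{m\in \c J} Y_m.
\]
Hence it suffices to prove that $\mathbb E_{\Omega(d)} \prod_{m\in \c J} Y_m = G_\ell(d,s)$ for a single degree bound $d$.

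Next I would compute this single-coordinate expectation by inclusion--exclusion in the form $Y_m = 1-(1-Y_m)$. Expanding gives
\[
\prod_{m\in \c J} Y_m \;=\; \sum_{\c K\subseteq \c J}(-1)^{|\c K|}\prod_{m\in \c K}(1-Y_m),
\]
so after taking expectations and grouping by $r=|\c K|$:
\[
\mathbb E_{\Omega(d)}\prod_{m\in \c J} Y_m \;=\; \sum_{r=0}^{s}\binom{s}{r}(-1)^r\, p(r,d),
\]
where $p(r,d):=\mathbb E_{\Omega(d)}\prod_{m\in \c K}(1-Y_m)$ for any fixed $\c K\subset\F_\ell$ of size $r$. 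This expectation counts the proportion of $P\in\F_\ell[t]$ with $\deg P\le d$ vanishing on all of $\c K$.

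The key step is to evaluate $p(r,d)$ in the two regimes $r\le d+1$ and $r>d+1$. When $r\le d+1$, the Vandermonde computation from the proof of Lemma~\ref{april} gives $p(r,d)=\ell^{-r}$. When $r>d+1$, a nonzero polynomial of degree $\le d$ cannot vanish at more than $d$ distinct points, so the only such $P$ is the zero polynomial, yielding $p(r,d)=\ell^{-(d+1)}$. In both cases $p(r,d)=\ell^{-\min\{r,1+d\}}$. Substituting back,
\[
\mathbb E_{\Omega(d)}\prod_{m\in \c J} Y_m \;=\; \sum_{r=0}^{s}\binom{s}{r}\frac{(-1)^r}{\ell^{\min\{r,1+d\}}} \;=\; G_\ell(d,s),
\]
and multiplying over $i=1,\ldots,n$ yields the claim. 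No step looks like a real obstacle; the only subtlety is remembering to split the inclusion--exclusion sum at $r=d+1$, since for $r>d+1$ the independence of Lemma~\ref{april} fails and one must instead use that $P$ is forced to be identically zero.
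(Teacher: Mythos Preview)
Your proposal is correct and follows essentially the same approach as the paper: factor over the $n$ coordinates using the product measure, then on each coordinate apply inclusion--exclusion $Y_m = 1 - (1-Y_m)$ and evaluate $\mathbb E_{\Omega(d)}\prod_{m\in\c K}(1-Y_m)$ as $\ell^{-\min\{|\c K|,\,d+1\}}$ via the Vandermonde/zero-polynomial dichotomy. The paper's proof is identical in substance, differing only in notation (it writes the factorisation via multiplicativity of the principal character $\chi$ rather than via your $Y_m^{(i)}$).
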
 
\begin{proof}
By multiplicativity of the principal Dirichlet character $\chi$ we have
\[ \prod_{m\in \c J}X_m=\prod_{m\in \c J}\chi\l(\prod_{k=1}^n   P_k(m)\r)=
\prod_{k=1}^n  \chi\l(   \prod_{m\in \c J} P_k(m)\r),\] 
hence
\[\mathbb E_{\Omega}\prod_{m\in \c J}X_m=
\prod_{k=1}^n    \mathbb E_{\Omega(d_k)}\prod_{m\in \c J}\chi(P(m))  .\] 
For a fixed $k$ we have
\[ \mathbb E_{\Omega(d_k)}\prod_{m\in \c J}\chi(P(m)) =
\mathbb E_{\Omega(d_k)}\prod_{m\in \c J}Y_m =
\sum_{r=0}^{s} (-1)^{\#\c A}  \sum_{\c A \subset \c J}
 \mathbb E_{\Omega(d_k)}\prod_{m\in \c A}(1-Y_m).\] 
From the definition of the random variables $Y_m$ we get
\[
 \mathbb E_{\Omega(d_k)}\prod_{m\in \c A}(1-Y_m)
=\ell^{-(d_k+1)}\#\l\{P\in \F_\ell[t]: \deg(P)\leq d_k, P(m)= 0\ \text{\rm if}\ m\in \c A\r\}.\]
If $\#\c A\leq d_k+1$, this equals $\ell^{-\#\c A}$ by (\ref{8apr}).
If $\#\c A\geq d_k+1$, then $P$ has more than $\deg(P)$ roots in $\F_\ell$, 
hence $P$ is identically zero and the quantity above is $\ell^{-(d_k+1)}$. 
Thus
\[ \mathbb E_{\Omega(d_k)}\prod_{m\in \c A}(1-Y_m)
=\ell^{-\min\{\#\c A, d_k+1\}}.\] 
This implies the lemma.
\end{proof} 

\begin{lemma}[Joint distribution of Bernoulli variables] \label{lem:nductivevandermcellobach} 
For $\gamma_1, \ldots, \gamma_\ell \in \{0,1\}$ we have 
\[
\P\l[X_m=\gamma_m \text{\rm \ for all \ } m =1,\ldots, \ell\r]
=(-1)^{\#\{i: \gamma_i = 0 \}}
\sum_{\substack{ \c J\subset \F_\ell \\ i\not\in \c J \Rightarrow \gamma_i=0 }}
 (-1)^{\ell-\#\c J} \prod_{k=1}^n  G_\ell(d_k, \#\c J) 
 .\]
\end{lemma}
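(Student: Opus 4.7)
The plan is to reduce the joint-distribution statement to Lemma~\ref{lem:bachcellosuite5} by a standard inclusion--exclusion on the events $\{X_m=0\}$.

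First I would introduce the notation $S=\{i\in\F_\ell:\gamma_i=1\}$ and $T=\{i\in\F_\ell:\gamma_i=0\}$, so that $S\sqcup T=\F_\ell$ and $|T|=\#\{i:\gamma_i=0\}$. Since each $X_m$ takes values in $\{0,1\}$, the event in question can be written multiplicatively, giving
\[
\P\bigl[X_m=\gamma_m \text{ for all } m\bigr]
=\mathbb{E}_{\Omega}\Bigl[\prod_{m\in S}X_m\prod_{m\in T}(1-X_m)\Bigr].
\]
This rewriting uses only the Bernoulli property of the $X_m$ and is the conceptual content of the lemma; what remains is bookkeeping.

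Next I would expand the $(1-X_m)$ factors. For $A\subseteq T$ we get
\[
\prod_{m\in T}(1-X_m)=\sum_{A\subseteq T}(-1)^{|A|}\prod_{m\in A}X_m,
\]
and substituting yields
\[
\P\bigl[X_m=\gamma_m \text{ for all } m\bigr]
=\sum_{A\subseteq T}(-1)^{|A|}\,\mathbb{E}_{\Omega}\Bigl[\prod_{m\in S\cup A}X_m\Bigr].
\]
Since $S$ and $A$ are disjoint, $|S\cup A|=|S|+|A|$, and Lemma~\ref{lem:bachcellosuite5} evaluates each expectation as $\prod_{k=1}^n G_\ell(d_k,|S|+|A|)$.

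Finally, to match the stated form I reindex by $\c J:=S\cup A$. The condition $S\subseteq\c J\subseteq\F_\ell$ is equivalent to ``$i\notin\c J\Rightarrow\gamma_i=0$'', since $i\in S\Leftrightarrow\gamma_i=1$. Writing $|A|=|\c J|-|S|=|\c J|-(\ell-|T|)$, the sign becomes
\[
(-1)^{|A|}=(-1)^{|\c J|-\ell+|T|}=(-1)^{|T|}(-1)^{\ell-|\c J|},
\]
so that the constant factor $(-1)^{\#\{i:\gamma_i=0\}}=(-1)^{|T|}$ comes outside the sum, producing exactly the right-hand side of the lemma. There is no real obstacle here beyond the combinatorial reindexing; the only substantive input is Lemma~\ref{lem:bachcellosuite5}, whose proof relied on a pairwise-independence plus Vandermonde argument in $\Omega(d_k)$.
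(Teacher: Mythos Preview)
Your proof is correct and follows essentially the same route as the paper: write the probability as $\mathbb{E}_\Omega\bigl[\prod_{m\in S}X_m\prod_{m\in T}(1-X_m)\bigr]$, expand, and invoke Lemma~\ref{lem:bachcellosuite5}. The paper packages the expansion slightly differently (it sets $\beta_i=1-\gamma_i$, expands $\prod_m(X_m-\beta_m)$ over all $\c J\subset\F_\ell$, and then observes $\prod_{i\notin\c J}\beta_i$ forces the constraint $i\notin\c J\Rightarrow\gamma_i=0$), but this is just alternative bookkeeping for the same inclusion--exclusion.
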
  
\begin{proof} 
The event $X_m=\gamma_m$ for $\gamma_m=0$ (respectively, $\gamma_m=1$)
is detected by the function $1-X_m$ (respectively, $X_m$).
Therefore, writing $\beta_i=1-\gamma_i$ we obtain 
\[\P\l[X_m=\gamma_m \text{\rm \ for all \ } m =1,\ldots, \ell\r]
=(-1)^{\#\{i:\, \gamma_i = 0 \}}
\mathbb E_{\Omega}\prod_{m=1}^\ell (X_m-\beta_m ).\] 
The mean in the right hand side equals 
\[
\sum_{\c J \subset \F_\ell}
\l(\prod_{i\notin \c J} (-\beta_i) \r)
 \mathbb E_\Omega\prod_{i\in \c J} X_i 
=
\sum_{\c J \subset \F_\ell}(-1)^{\ell-\#\c J} \prod_{k=1}^n  G_\ell(d_k,\# \c J)
\prod_{i\notin \c J} \beta_i 
\] due to 
Lemma~\ref{lem:bachcellosuite5}. In view of $\beta_i\in\{0,1\}$ this proves the lemma.
\end{proof}
 \subsection{Consequences of the Bernoulli model}
\label{subs:conseq}

For $n \in \N$ and any prime $\ell  $
define
\beq
{eq:bratsakipsomaki}
{
\gamma_n(\ell  )
:=
 1-\frac{1}{\ell  }  +\frac{\ell^{n-1}}{(\ell -1)^n}
.}

\begin{lemma}
\label
{lem:corolhaha}
We have
 \[\ell^{-(d+n)}
  \sum_{\substack{ P_1 \in \F_\ell[t],\, \deg(P_1)
\leq 
d_1 \\  \ldots 
\\
P_n \in \F_\ell[t],\, \deg(P_n)
\leq 
d_n 
}}
\l(
1-\frac{Z_{P_1 \ldots P_n  } (\ell )}{\ell }
\r)^2
= 
\gamma_n(\ell  )
\l(  1-\frac{1}{\ell} \r)^{2n} 
 .\]
\end{lemma}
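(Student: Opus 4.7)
The plan is to reinterpret the normalised sum as an expectation on the probability space $\Omega=\Omega(d_1)\times\cdots\times\Omega(d_n)$ built in \S\ref{s:bernoulli} and then apply the independence properties of the Bernoulli variables $X_m$ already established. Since $\#\Omega(d_i)=\ell^{d_i+1}$ and $d=d_1+\cdots+d_n$, the prefactor $\ell^{-(d+n)}$ is exactly the uniform weight $\PP$ on $\Omega$, so the left hand side equals
\[
\mathbb{E}_{\Omega}\!\left[\left(1-\frac{Z_{P_1\ldots P_n}(\ell)}{\ell}\right)^{\!2}\right].
\]

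Next I would use the identity \eqref{eq:oktelos}, which gives $1-Z_{P_1\ldots P_n}(\ell)/\ell=\ell^{-1}(X_1+\cdots+X_\ell)$. Expanding the square and exploiting that each $X_m$ is Bernoulli, hence $X_m^2=X_m$, yields
\[
\mathbb{E}_{\Omega}\!\left[\left(\sum_{m=1}^\ell X_m\right)^{\!2}\right]
=\sum_{m=1}^\ell \mathbb{E}_{\Omega}X_m+\sum_{\substack{m,k=1\\ m\neq k}}^{\ell}\mathbb{E}_{\Omega}[X_mX_k].
\]
The first sum is evaluated by Lemma~\ref{lem:thisissufficiehdfpantelonint}, contributing $\ell(1-\ell^{-1})^n$. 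For the cross terms, Lemma~\ref{lem:thisissufficient} gives pairwise independence of $X_m$ and $X_k$, so each equals $(1-\ell^{-1})^{2n}$, and there are $\ell(\ell-1)$ such terms.

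Combining and dividing by $\ell^2$ produces
\[
\frac{1}{\ell}(1-\ell^{-1})^n+(1-\ell^{-1})(1-\ell^{-1})^{2n},
\]
which I would factor as $(1-\ell^{-1})^{2n}\bigl(1-\ell^{-1}+\ell^{-1}(1-\ell^{-1})^{-n}\bigr)$, matching the right hand side. There is no real obstacle here; the lemma is essentially a clean consequence of the variance-style computation enabled by the pairwise independence proved in Lemma~\ref{lem:thisissufficient} together with the identity $X_m^2=X_m$. The only point to verify carefully is the counting $\#\Omega=\ell^{d+n}$ so that the stated normalisation indeed coincides with $\mathbb{E}_{\Omega}$.
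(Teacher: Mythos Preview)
Your proposal is correct and follows essentially the same route as the paper: the paper's proof likewise rewrites the left hand side as $\ell^{-2}\,\mathbb E_\Omega[(X_1+\cdots+X_\ell)^2]$, opens the square, and invokes Lemmas~\ref{lem:thisissufficiehdfpantelonint} and~\ref{lem:thisissufficient}. Your write-up merely spells out the diagonal/off-diagonal bookkeeping and the final factorisation that the paper leaves implicit.
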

\begin{proof}
We write 
 the left hand side as  $\ell^{-2}  \mathbb E_{\Omega}[(X_1+\ldots+X_\ell)^2]$, open up the square and use 
Lemmas~\ref{lem:thisissufficiehdfpantelonint} and \ref{lem:thisissufficient}.
\end{proof}
By considering $\ell^{-1}   \mathbb E_{\b P\in \Omega}[X_1+\ldots +X_\ell ]$ instead
we obtain
$$
\ell^{-(d+n)}
 \sum_{\substack{ P_1 \in \F_\ell[t],\, \deg(P_1)
\leq 
d_1 \\  \ldots 
\\
P_n \in \F_\ell[t],\, \deg(P_n)
\leq 
d_n 
}}
\l(
1-\frac{Z_{P_1 \ldots P_n  } (\ell )}{\ell }
\r)
= 
\l(  1-\frac{1}{\ell} \r)^{n}.$$
\begin{lemma}
\label
{lem:corolhahparapoligelioorimaprosopa}
Fix any $m \in \N$.
We have
\[\ell^{-(d+n)}
 \sum_{\substack{ P_1 \in \F_\ell[t],\, \deg(P_1)
\leq 
d_1, P_1(m)\neq 0 
 \\  \ldots 
\\
P_n \in \F_\ell[t],\, \deg(P_n)
\leq 
d_n , P_n(m)\neq 0 
}}
\l(
1-\frac{Z_{P_1 \ldots P_n  } (\ell )}{\ell }
\r)
= \gamma_n(\ell  )
\l(1-\frac{1}{\ell} \r)^{2n}  .\]
\end{lemma}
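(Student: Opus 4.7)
The plan is to recognize the sum as an expectation in the Bernoulli model of Section 2.1 and evaluate it by splitting the diagonal term from the off-diagonal ones.

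First I would note that the total count $\ell^{d+n} = \prod_{i=1}^n \ell^{d_i+1}$ is exactly $|\Omega|$, so that $\ell^{-(d+n)}$ times the sum is the expectation $\mathbb{E}_\Omega[\,\cdot\,]$ over the product probability space. Since each $X_m$ takes values in $\{0,1\}$, the constraint ``$P_i(m)\neq 0$ for every $i=1,\ldots,n$'' is exactly the indicator $X_m=1$, i.e.\ it equals $X_m$ as a random variable. Using \eqref{eq:oktelos}, I would rewrite
\[
1-\frac{Z_{P_1\ldots P_n}(\ell)}{\ell}=\frac{1}{\ell}\sum_{k=1}^{\ell}X_k,
\]
so that the left hand side becomes
\[
\frac{1}{\ell}\sum_{k=1}^{\ell}\mathbb{E}_\Omega\bigl[X_m X_k\bigr].
\]

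Next I would split this sum according to whether $k=m$ or $k\neq m$. For the diagonal term I use that $X_m^2=X_m$ (Bernoulli), so $\mathbb{E}[X_m^2]=\mathbb{E}[X_m]=(1-\ell^{-1})^n$ by Lemma~\ref{lem:thisissufficiehdfpantelonint}. For the $\ell-1$ off-diagonal terms I use the pairwise independence from Lemma~\ref{lem:thisissufficient} together with Lemma~\ref{lem:thisissufficiehdfpantelonint} to conclude $\mathbb{E}[X_m X_k]=(1-\ell^{-1})^{2n}$. Combining,
\[
\text{LHS}=\frac{1}{\ell}\bigl((1-\ell^{-1})^n+(\ell-1)(1-\ell^{-1})^{2n}\bigr)=\frac{(1-\ell^{-1})^n}{\ell}\bigl(1+(\ell-1)(1-\ell^{-1})^n\bigr).
\]

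Finally I would verify that this matches the right hand side by a short algebraic manipulation: factoring $(1-\ell^{-1})^{2n}$ out of the claimed answer yields
\[
(1-\ell^{-1})^{2n+1}+\frac{(1-\ell^{-1})^n}{\ell}=\frac{(1-\ell^{-1})^n}{\ell}\bigl(\ell(1-\ell^{-1})^{n+1}+1\bigr),
\]
and since $\ell(1-\ell^{-1})^{n+1}=(\ell-1)(1-\ell^{-1})^n$, the two expressions agree. There is no real obstacle here; the main conceptual point is just to recognise the restricted sum as the first moment of $(X_1+\cdots+X_\ell)/\ell$ conditioned (in the unnormalised sense) on $X_m=1$, so that the entire computation reduces to Lemmas~\ref{lem:thisissufficiehdfpantelonint} and~\ref{lem:thisissufficient}. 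This parallels exactly the proof of Lemma~\ref{lem:corolhaha}, with the extra factor $X_m$ accounting for the additional non-vanishing conditions at $m$.
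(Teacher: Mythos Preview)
Your proof is correct and follows essentially the same approach as the paper: both recognise the left hand side as $\mathbb{E}_\Omega\bigl[X_m\cdot\ell^{-1}\sum_k X_k\bigr]$, split the diagonal term $X_m^2=X_m$ from the off-diagonal ones, and then invoke Lemmas~\ref{lem:thisissufficiehdfpantelonint} and~\ref{lem:thisissufficient}. Your write-up is simply more explicit about the final algebraic identification with the stated right hand side.
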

\begin{proof}
By~\eqref{eq:oktelos} and Lemma~\ref{lem:thisissufficient}
the left hand side in our lemma equals 
\[
\mathbb E_{\Omega}\l[\l(\frac{X_1+\ldots +X_\ell }{\ell}\r)
 X_m   
\r ]=
\frac{\mathbb E_{\Omega}\l[ X_m    \r ]
 }{\ell}+
\frac{\mathbb E_{\Omega}\l[ X_m \r ]
}{\ell}\sum_{i\neq m }
\mathbb E_{\Omega}\l[ X_i  \r ]
.\]
The proof now concludes by using Lemma~\ref{lem:thisissufficiehdfpantelonint}.\end{proof}

\subsection{Density of Schinzel $n$-tuples}
\label
{s:densschinzpolys}
 
For a prime $\ell$ define the set 
$$ \texttt{T}_{\ell}  :=
\{\b P\in (\F_\ell[t])^n : Z_{P_1\ldots P_n }(\ell) \neq \ell, \ 
\deg(P_i) \leq d_i\ \text{for all $i=1,\ldots, n$}\}.$$
By Lemma~\ref{lem:nductivevandermcellobach}
with all $\gamma_i=0$ we have $\#\texttt{T}_{\ell}=(1-c_\ell )\ell^{d+n }$, where 
\beq{eq:se5lepta}
{c_\ell:= 
 \sum_{\substack{ \c J\subset \F_\ell   }}
 (-1)^{\#\c J}\prod_{k=1}^n  G_\ell(d_k, \#\c J) 
 .}
When $\ell>d$ it is easy to see that $\#\texttt{T}_\ell=\prod_{i=1}^n(\ell^{d_i+1}-1)$,
hence $1-c_\ell=\prod_{i=1}^n(1-\ell^{-(d_i+1)})$.
\begin{proposition} \label
{prop:betagammadelta}
For any $M\in\N$ we have {\rm $$
\#\{\b P\in \texttt{Poly}(H)
:  Z_{P_1\ldots P_n }(\ell) \neq \ell \ \text{\rm for all} \ \ell\nmid M \}
=
  2^{d }
\l(\prod_{{\rm prime}\,\ell \nmid M}
(1-c_\ell )
 \r)
\l(\frac{ H}{  M  } \r)^{d+n}
\!+O \l(\frac{H^{d+n}}{\log H}\r).$$ }
The infinite product converges absolutely to a positive real number. 
In particular, the set of Schinzel $n$-tuples of given degrees has positive density in 
the set of all $n$-tuples of integer polynomials of the same degrees.
 \end{proposition} 
\begin
{proof}
Let $\c W$ be the product of all primes $\ell<\frac{1}{10} \log H$ such that
$\ell\nmid M$. Define
$$K(H)= \#\l\{\b P\in \texttt{Poly}(H)
:Z_{P_1\ldots P_n }(\ell) \neq \ell \ \text{\rm for all primes} \ \ell|\c W \r\}.$$
The counting function in the lemma is 
$K(H)+O(H^{d+n}(\log H )^{-1}).$
Indeed, the number of $\b P\in \texttt{Poly}(H)$ such that for some $j=1,\ldots, n $
there is a prime $\ell>\frac{1}{10} \log H$ for which $P_j$ is identically zero on $\F_\ell$
is  
\[\ll \sum_{{\rm prime} \,\ell >\frac{1}{10}\log H} 
\l(\prod_{\substack{i=1\\i\neq j }}^n H^{1+d_i} \r)(H/\ell)^{1+d_j }
\ll H^{d+n } \sum_{{\rm prime} \,\ell >\frac{1}{10}\log H }   \ell^{-2} \ll H^{d+n} (\log H )^{-1}. \]
We have
\[K(H)=\sum_{ \b P\in \texttt{Poly}(H)}
\prod_{{\rm prime} \,\ell \mid \c W}
\mathds 1_{\texttt{T}_{\ell}}(\b P)=
2^{-n }
\l(\frac{2 H}{\c W M}+O(1)\r)^{d+n} \prod_{{\rm prime} \,\ell \mid \c W}
 \#\texttt{T}_{\ell},\]
by the Chinese remainder theorem applied to the coefficients of the polynomials $P_i$. 
Taking into account that $\#\texttt{T}_{\ell}=(1-c_\ell )\ell^{d+n }$
we rewrite this as
$$K(H)=2^{d }\l(\frac{ H}{  M  } +O(1)\c W\r)^{d+n}
\prod_{{\rm prime}\,\ell \mid \c W}(1-c_\ell).$$
Note that $\log \c W \leq \sum_{\ell \leq (\log H ) /10 }\log \ell \leq (\log H)/2$ for all sufficiently large $H$ by the prime number theorem.
Hence $\c W \leq H^{1/2}$, which implies
\[
K(H)= 2^{d }
\l(\frac{ H}{  M  } \r)^{n+d}
\prod_{{\rm prime}\,\ell \mid \c W} (1-c_\ell)+O(H^{d+n -1/2})
.\] The estimate $\prod_{{\rm prime}\,\ell >\frac{1}{10}\log H} \l(1-\ell^{-(d_i+1) }\r)
=1+O((\log H)^{-d_i} )$ concludes the proof.

The product converges absolutely because for all $\ell>d$ we have
$$1-c_\ell=\prod_{i=1}^n(1-\ell^{-(d_i+1)})=1+O(\ell^{-2}).$$
Since $\texttt{T}_{\ell}\neq\emptyset$ we have $\#\texttt{T}_{\ell}=(1-c_\ell )\ell^{d+n }> 0$,
so the infinite product  is positive.
\end{proof}  

\begin{corollary}
 \label{densitySch01234}
Fix $d, M \in \N$. Let $Q(t)\in \Z[t]$ be a polynomial of degree at most $d$.
The number of degree $d$ polynomials $f(t)\in\Z[t]$
with positive leading coefficient and height at most $H$
such that $f\equiv Q \md M$ and $Z_f(\ell)\neq \ell$ for each prime $\ell \nmid M$
is 
\[
2^d
\l(
\prod_{  \text{\rm prime } \ell \nmid M }
(1-\ell^{-\min\{\ell, d+1\}})
\r)
\frac{H^{d+1}}{M^{d+1} }
+O \l(\frac{H^{d+1}}{\log H}\r)
.\]  
\end{corollary}
\begin{proof}
We apply Proposition~\ref{prop:betagammadelta} in the case $n=1$.
For $\ell>d+1$ we have $c_\ell=\ell^{-(d+1)}$. 
If $s \leq d+1$ then~\eqref{eq:cofeeroastartisan}
becomes  $G_\ell(d,s)=(1-1/\ell)^s$. Hence for $\ell \leq d+1$, ~\eqref{eq:se5lepta} gives $c_\ell=\ell^{-\ell}$.
\end{proof}
The case $M=1$ of Corollary~\ref{densitySch01234}
is particularly useful and is worth recording separately:
\begin{corollary}
 \label{densitySch}
The number of degree $d$ Bouniakowsky polynomials of height at most $H$~is 
\[
2^d
\l(
\prod_{  \text{\rm prime } \ell }
(1-\ell^{-\min\{\ell, d+1\}})
\r)
H^{d+1}
+O \l(\frac{H^{d+1}}{\log H}\r)
.\]  
\end{corollary}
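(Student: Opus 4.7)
The plan is to specialise Proposition \ref{prop:betagammadelta} to the case $n=1$, $d_1=d$, $M=1$. Under these choices, $\texttt{Poly}(H)$ is exactly the set of $P\in\Z[t]$ of degree $d$ with positive leading coefficient and $|P|\leq H$. The Bouniakowsky condition on $P$ — that the reduction of $P$ modulo every prime $\ell$ is not divisible by $t^\ell-t$ — is equivalent to $Z_P(\ell)\neq \ell$ for every prime $\ell$: indeed, $Z_P(\ell)=\ell$ means $P(m)\equiv 0\bmod\ell$ for all $m\in\F_\ell$, which by the polynomial remainder theorem is equivalent to $(t^\ell-t)\mid P$ in $\F_\ell[t]$. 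So the counting function in the corollary coincides with the left-hand side of the proposition in this special case, and the proposition already delivers the main term $2^d\prod_\ell(1-c_\ell)\, H^{d+1}$ with error $O(H^{d+1}/\log H)$.

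The only remaining task is thus to compute $c_\ell$ in the case $n=1$ and identify it with $\ell^{-\min\{\ell,d+1\}}$. For $\ell>d$ this is already noted in the paragraph preceding the proposition: the condition $Z_P(\ell)=\ell$ forces $P\equiv 0$ in $\F_\ell[t]$, so the density of such $P$ is $\ell^{-(d+1)}$. For $\ell\leq d$, I would argue directly: the divisibility $(t^\ell-t)\mid P$ in $\F_\ell[t]$ is equivalent to writing $P=(t^\ell-t)\,Q$ with $Q\in\F_\ell[t]$ of degree $\leq d-\ell$, so there are $\ell^{d-\ell+1}$ such $P$ among the $\ell^{d+1}$ polynomials of degree $\leq d$, giving $c_\ell=\ell^{-\ell}$. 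In both regimes $c_\ell=\ell^{-\min\{\ell,d+1\}}$, and substituting this into the proposition yields the corollary.

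I do not anticipate any serious obstacle here, since Proposition \ref{prop:betagammadelta} has already absorbed all the sieve-theoretic work. The only mildly non-routine point is the combinatorial identification of $c_\ell$; as a sanity check one could alternatively verify it by plugging the definition \eqref{eq:cofeeroastartisan} of $G_\ell(d,s)$ into the formula \eqref{eq:se5lepta} and simplifying the resulting alternating binomial sum, but the direct vector-space count above is cleaner and makes the appearance of $\min\{\ell,d+1\}$ transparent.
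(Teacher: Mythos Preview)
Your proposal is correct and follows essentially the same approach as the paper: specialise Proposition~\ref{prop:betagammadelta} to $n=1$, $M=1$, and then identify $c_\ell$ with $\ell^{-\min\{\ell,d+1\}}$. The only cosmetic difference is that for small primes the paper computes $c_\ell$ by evaluating the formula~\eqref{eq:se5lepta} using $G_\ell(d,s)=(1-1/\ell)^s$ (valid when $s\leq d+1$) and the binomial theorem, whereas you count multiples of $t^\ell-t$ directly; both are one-line computations yielding $c_\ell=\ell^{-\ell}$.
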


\section{M\"obius randomness law}
\label{s:mob} 
For any 
$d, k,m \in \N$  
and $H\geq 1 $
we   let  \beq
{def:bachist}{ \c G_{k,m }(H; d) :=
\sum_{\substack{   P\in \Z[t]   ,\, \deg(P)=d \\ | P   | \leq H  ,\, P>0   } }
\Lambda( P(k ) )
\Lambda( P(m ) )
,}
where $\Lambda(n)$ is the von Mangoldt function.
The main result of this section is the following asymptotic
for $ \c G_{k,m}(H; d)$ as $H\to \infty$
that exhibits an effective dependence on $k$ and $m $. 
\begin{theorem}
\label
{thm:mainresulthalb}
Fix any 
$d\in \N$ and 
$\delta>0 $.
Then for all 
$H \geq 1 $,  $A>0$, and all natural numbers $k, m\leq(\log H)^{\delta}$, $k\neq m$,
we have  
 \[ 
 \c G_{k,m}(H; d)
=
2^d H^{d+1}\prod_{\substack{p\rm{\, prime} 
\\ p\mid k-m 
}} 
\frac{p}{p-1} 
+O_A\l(
H^{d+1}
(\log H)^{-A}
\r)
 ,\] 
where the implied constant is independent of $k,m $ and $H$.
\end{theorem}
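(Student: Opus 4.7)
The plan is to view $\mathcal{G}_{k,m}(H;d)$ as a prime-pair correlation sum for the two linear forms $L_k(\mathbf{a}) := \sum_{i=0}^d a_i k^i$ and $L_m(\mathbf{a}) := \sum_{i=0}^d a_i m^i$ in $d+1$ coefficient variables $\mathbf{a}=(a_0,\ldots,a_d)$, and to reduce it, by fixing $(a_1,\ldots,a_d)$ and summing over $a_0$, to a Lavrik-type shifted prime-pair sum whose average over the remaining coefficients reconstructs the singular series. The two forms are linearly independent over $\Q$ because $k\neq m$, and the coefficient box has volume $2^d H^{d+1}$. Fix $\mathbf{a}' = (a_1,\ldots,a_d)$ with $a_d\geq 1$ and $|a_i|\leq H$, set $u := \sum_{i\geq 1} a_i k^i$ and $h := (m-k)\sum_{i=1}^d a_i \sum_{j=0}^{i-1} m^{i-1-j}k^j$, and substitute $N = a_0+u$. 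The inner sum becomes
\[
S(\mathbf{a}') = \sum_{N\in I(\mathbf{a}')} \Lambda(N)\Lambda(N+h),
\]
over an interval $I(\mathbf{a}')$ of length $2H$; since $k,m\leq(\log H)^\delta$ we have $|h|\ll H(\log H)^{O(1)}$. The degenerate sublattice $h(\mathbf{a}')=0$ has size $O(H^{d-1})$ and contributes at most $O(H^{d}\log H)$, negligibly.

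The central analytic input is a uniform Lavrik-type asymptotic: for any $B>0$, uniformly for nonzero integer shifts $h$ with $|h|\ll H(\log H)^{O(1)}$ and intervals $I$ of length $2H$,
\[
\sum_{N\in I}\Lambda(N)\Lambda(N+h) = 2H\cdot\mathfrak{T}(h) + O_B\bigl(H(\log H)^{-B}\bigr),
\]
where $\mathfrak{T}(h)$ is the Hardy--Littlewood singular series for prime pairs with gap $h$. Whereas Montgomery and Lavrik extract this via the circle method and exponential sum estimates, the strategy declared in Section~\ref{s:mob} is to apply Heath--Brown's (or Vaughan's) combinatorial identity to each $\Lambda$-factor, decompose the sum into type I and type II pieces, isolate the diagonal as the main term $2H\mathfrak{T}(h)$, and invoke the M\"obius randomness law, in the form of cancellation in bilinear sums $\sum_n\mu(n)f(n)$, to control all off-diagonal pieces uniformly in $h$ of polynomial size.

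Summing $S(\mathbf{a}')$ over $\mathbf{a}'$ reduces the main term to $2H$ times the average of $\mathfrak{T}(h(\mathbf{a}'))$ over the $\mathbf{a}'$-box. Factoring $\mathfrak{T}(h) = \prod_p\beta_p(h)$ with $\beta_p(h) = (1-w_p(h)/p)/(1-1/p)^2$, where $w_p(h)=1$ if $p\mid h$ and $w_p(h)=2$ otherwise, the Chinese Remainder Theorem splits the average into a product of local averages $\mathbb{E}_{\mathbf{a}'\bmod p}[\beta_p(h(\mathbf{a}'))]$. For $p\nmid k-m$, $h(\mathbf{a}')$ is a nonzero linear form mod $p$, so $p\mid h(\mathbf{a}')$ with density $1/p$, and a direct calculation gives local average
\[
\tfrac{1}{p}\cdot\tfrac{p}{p-1} + \bigl(1-\tfrac{1}{p}\bigr)\cdot\tfrac{p(p-2)}{(p-1)^2} = 1.
\]
For $p\mid k-m$, $h(\mathbf{a}')\equiv 0\pmod p$ identically, yielding the local factor $p/(p-1)$. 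Combining the $\mathbf{a}'$-volume $(2H)^{d-1}\cdot H$ with the outer $2H$ from Lavrik delivers the claimed $2^dH^{d+1}\prod_{p\mid k-m}p/(p-1)$. The main obstacle is the Lavrik asymptotic at shifts $h$ of size comparable to (or larger than) the summation length; the M\"obius randomness route must be pushed through with carefully calibrated cutoffs in the combinatorial $\Lambda$-identity, and a secondary technical point is matching the truncated Euler product that emerges from this procedure with the full singular series $\prod_{p\mid k-m}p/(p-1)$ with an error comfortably smaller than $(\log H)^{-A}$.
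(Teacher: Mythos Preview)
Your proposal has a fatal gap. The ``central analytic input'' you state,
\[
\sum_{N\in I}\Lambda(N)\Lambda(N+h) = 2H\cdot\mathfrak{T}(h) + O_B\bigl(H(\log H)^{-B}\bigr)
\]
uniformly for each individual shift $h$, is not a Lavrik-type theorem at all: it is the quantitative Hardy--Littlewood prime-pair conjecture, wide open for every nonzero $h$. Lavrik's results are \emph{averaged} statements (in $L^1$ or $L^2$ over $h$), and no Type~I/Type~II decomposition via Vaughan or Heath--Brown gives a pointwise asymptotic for $\sum_N\Lambda(N)\Lambda(N+h)$; if it did, twin primes would follow. Your plan fixes $\mathbf a'=(a_1,\ldots,a_d)$ first and then invokes this pointwise statement for the specific shift $h(\mathbf a')$, so the argument collapses at that step. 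Even reinterpreted as an average over $\mathbf a'$, the shifts $h(\mathbf a')$ form a structured linear image of a box and can exceed the summation length $2H$, so Lavrik's actual theorems do not apply directly either.

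The paper's route is genuinely different and avoids this trap precisely by \emph{never} isolating a single shift. It keeps the full sum over all $d+1$ coefficients of $P$ together and writes, via Fourier expansion (Lemma~\ref{eq:La Villa Strangiato - Rush}), $\sum_{|P|\leq H}g(P(k))f(P(m))$ as a double integral against a product of Dirichlet kernels. Then it splits $\Lambda=\Lambda_z+\mathcal E_z$ with the elementary truncation $\Lambda_z(n)=-\sum_{d\leq z,\,d\mid n}\mu(d)\log d$, and bounds the $\mathcal E_z$ contribution by $\|S_{\mathcal E_z}\|_\infty$ times an $L^1$-Dirichlet-kernel integral (Lemma~\ref{lem:kunst der fuge}). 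The single M\"obius randomness input is Davenport's bound $\sup_\alpha|\sum_{m\leq y}\mu(m)e^{im\alpha}|\ll y(\log y)^{-A}$, which controls $\|S_{\mathcal E_z}\|_\infty$ directly---no bilinear decomposition of $\Lambda$ is used. The main term then comes from an explicit, entirely elementary evaluation of the $\Lambda_z\Lambda_z$ sum (Lemmas~\ref{lem:initialtransform}--\ref{lem:aurtghdfhdh2}). Your main-term computation of the averaged singular series is correct, but it is attached to a main term you have no way to extract.
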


\subsection{Using M\"obius randomness law}
\label
{s:usmobtruncate} 
As usual,  $\mu(r)$ is the M\"obius function. In broad terms, 
the M\"obius randomness law is a general principle which states that long 
 sums containing the M\"obius function  should exhibit cancellation. 
An early example is the following result of Davenport, 
whose proof is based on bilinear sums techniques.
\begin{lemma}
[Davenport]
\label
{lem:davenportmobius}
Fix $A>0$. Then for all $y\geq 1$ we have
\[
\sup_{\alpha \in \R} 
\l|
\sum_{r\in \mathbb N \cap[1,y]   }
\mu(r)
\mathrm e^{i r  \alpha }
\r|
\ll
 y (\log y)^{-A}
,\] where the implied constant depends only on $A$.
\end{lemma}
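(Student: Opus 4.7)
\textbf{The plan} is to prove the bound by the method of Vaughan: combine Vaughan's combinatorial identity for $\mu(n)$ with Dirichlet's theorem on Diophantine approximation, invoking the Siegel--Walfisz theorem for the ``major arc'' case and reducing to an oscillation estimate for the ``minor arcs''. Fix $A>0$ and let $y \geq 3$. For an arbitrary $\alpha \in \R$, Dirichlet's theorem produces coprime integers $a,q$ with $1 \leq q \leq Q := y^{2/3}$ and $|\alpha - a/q| \leq 1/(qQ)$. I split the argument according to the size of $q$, with threshold $T := (\log y)^{2A+2}$.

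\textbf{Major arcs.} If $q \leq T$, write $\alpha = a/q + \beta$ with $|\beta| \leq 1/(qQ)$ and decompose by residue classes modulo $q$:
\[
\sum_{m \leq y} \mu(m)\,e^{im\alpha} \;=\; \sum_{r=1}^{q} e^{ira/q}\sum_{\substack{m \leq y \\ m \equiv r \,(\bmod\,q)}} \mu(m)\,e^{im\beta}.
\]
Abel summation removes the smooth factor $e^{im\beta}$ at the cost of $O(1+|\beta|y)$, and the inner sum is bounded via the Siegel--Walfisz estimate
\[
\sum_{\substack{m \leq u \\ m \equiv r \,(\bmod\,q)}} \mu(m) \;\ll_{B}\; u\exp\bigl(-c\sqrt{\log u}\bigr)
\]
which holds uniformly for $q \leq (\log u)^{B}$ for any fixed $B>0$ (the residues $r$ with $\gcd(r,q)>1$ contribute $O(y/\min_{p\mid q}p)$, negligible). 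Summing over $r$ yields a contribution $\ll q y\exp(-c\sqrt{\log y}) \ll y(\log y)^{-A}$.

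\textbf{Minor arcs.} If $q > T$, apply Vaughan's identity with parameters $U = V = y^{1/3}$ to express, for $n > U$,
\[
\mu(n) \;=\; -\!\!\!\sum_{\substack{bc = n \\ b\leq U,\, c\leq V}}\mu(b)\mu(c) \;-\;\sum_{\substack{bc = n \\ b\leq UV,\, c>V \\ \text{convolution structure}}}\mu(b)\cdot 1 \;+\; \bigl(\text{dyadic Type II remainder}\bigr),
\]
and substitute. Up to an admissible error $O(U)$, the sum $\sum_{m\leq y}\mu(m)e^{im\alpha}$ is a bounded combination of \emph{Type I sums} $S_I := \sum_{b\leq UV}c_b\sum_{n \leq y/b}e^{ibn\alpha}$ with $|c_b| \leq \tau(b)$, and \emph{Type II sums} $S_{II} := \sum_{U<b\leq y/V}\sum_{V<n\leq y/b}c_b d_n\, e^{ibn\alpha}$ with coefficients bounded by divisor functions. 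For $S_I$ the inner geometric sum is $\ll \min(y/b, \|b\alpha\|^{-1})$, so by the standard bound
\[
\sum_{k \leq K}\min\bigl(y/k,\, \|k\alpha\|^{-1}\bigr) \;\ll\; (y/q + K + q)(\log Kq)^{2}
\]
(proved by a dyadic decomposition plus Dirichlet approximation applied to $\alpha$), we get $S_I \ll (y/q + UV + q)(\log y)^{2} \ll y(\log y)^{-A}$. For $S_{II}$, Cauchy--Schwarz in the $b$-variable and opening the square reduces the bound to sums of shape $\sum_{|h|\leq y/V}\min(y/V,\|h\alpha\|^{-1})$, yielding $S_{II}\ll \bigl(y/\sqrt{q} + \sqrt{yV} + \sqrt{yq/V}\bigr)(\log y)^{2}$, which for $q > T$ and $V = y^{1/3}$ is also $\ll y(\log y)^{-A}$.

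\textbf{Main obstacle.} The principal difficulty is the Type II estimate: after Cauchy--Schwarz one must control a correlation sum over pairs $(n_1,n_2)$ whose diagonal contribution is borderline, and only the rational approximation of $\alpha$ prevents the off-diagonal from growing too fast. The parameters $U=V=y^{1/3}$ and the threshold $T=(\log y)^{2A+2}$ must be balanced so that both $S_I, S_{II}$ and the major-arc estimate simultaneously deliver the $(\log y)^{-A}$ saving \emph{uniformly in} $\alpha$. The ineffectiveness of Siegel--Walfisz affects only the implied constant and is harmless for the qualitative bound requested.
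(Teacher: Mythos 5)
The paper does not actually prove this lemma: it is Davenport's theorem, and the "proof" given is a citation to Davenport's 1937 paper and to Iwaniec--Kowalski, Theorem 13.10. Your outline follows exactly the route of that cited proof (Vaughan's identity, Siegel--Walfisz on the major arcs, Type I/Type II bilinear estimates on the minor arcs), so in spirit you are reconstructing the standard argument rather than taking a different road. Note that Davenport's original proof used Vinogradov's method; the Vaughan-identity version you sketch is the modern streamlined form.

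There is, however, a genuine defect in your major-arc step, caused by the choice $Q=y^{2/3}$. Dirichlet's theorem then only guarantees $|\beta|=|\alpha-a/q|\le 1/(qy^{2/3})$, so the partial-summation cost $O(1+|\beta|y)$ can be as large as $y^{1/3}/q$; multiplied by the Siegel--Walfisz bound $y\exp(-c\sqrt{\log y})$ and summed over the $q$ residue classes, this gives $O\bigl(y^{4/3}\exp(-c\sqrt{\log y})\bigr)$, which is far larger than $y(\log y)^{-A}$. Your stated conclusion ``$\ll qy\exp(-c\sqrt{\log y})$'' silently drops the factor $1+|\beta|y$, and the loss cannot be recovered by short-interval estimates for $\mu$. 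The standard repair is to take $Q=y(\log y)^{-B}$ with $B$ large in terms of $A$: then on the major arcs $|\beta|y\le(\log y)^{B}/q$, the partial-summation loss is only a power of $\log y$ and is absorbed by $\exp(-c\sqrt{\log y})$, while on the minor arcs the terms $q(\log y)^{2}$ and $\sqrt{yq}(\log y)^{2}$ are still $\ll y(\log y)^{-A}$ because $q\le y(\log y)^{-B}$. A smaller point: your claim that the residues $r$ with $\gcd(r,q)>1$ contribute $O(y/\min_{p\mid q}p)$ is neither correct as stated nor negligible at that size; the correct statement is that Siegel--Walfisz for $\mu$ holds uniformly over \emph{all} residues $r$, which one proves by factoring out $g=\gcd(r,q)$ and reducing to a Möbius sum to the modulus $q/g$. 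With these two repairs your sketch becomes the proof of Iwaniec--Kowalski, Theorem 13.10, which is what the paper cites.
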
 
\begin{proof}
See \cite{davp} or \cite[Thm.~13.10]{iwa}.
\end{proof}
Recall that for $r \in \N$ we have $\Lambda(r)=
 -\sum_{d|r}\mu(d) \log d$.
We define the truncated von Mangoldt function
\[ 
 \Lambda_z(r)
:= 
 -\sum_{d\leq z, \,  d\mid r} \mu(d) \log d, \quad \text{where}\quad z\geq 1,\]
which will give rise to the main term in Theorem~\ref{thm:mainresulthalb} for suitably large $z$. 
The remainder 
$$\c E_z(r):=
\Lambda(r)
-
\Lambda_z(r)
$$
will contribute to the error term. 
When taking the sum over $r$, the variable
$d$ in 
 $\c E_z(r)
=
  -\sum_{ z< d,   d\mid r    } \mu(d) \log d 
$ runs over a long segment, so 
 the presence of $\mu(d)$
will give rise to cancellations. In particular, $\Lambda_z(r)$ is a good approximation to $\Lambda(r)$
for suitably large  $z$ and when one sums over $r$.
The advantage of this is that one can easily 
take care of various error terms in averages involving $\Lambda_z(r)$, due to truncation. 

We shall use the following corollary of Lemma~\ref{lem:davenportmobius}.
\begin{corollary}
\label
{cor:davenpmulog}
Fix $A>0$. Then for all $y,z\geq 1$ we have\[\sup_{\alpha \in \R} \l|
\sum_{r\in \mathbb N \cap[1,y]   }
\c E_z(r) \mathrm e^{i r \alpha } \r | \ll_A y (\log  y )  (\log z)^{-A} ,\] where the implied constant depends only on $A$.
\end{corollary}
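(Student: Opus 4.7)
The strategy is to reduce the estimate to Davenport's bound (Lemma~\ref{lem:davenportmobius}) by two successive interchanges of summation, handling the extra $\log d$ weight by Abel summation. Starting from $\c E_z(m)=-\sum_{d\mid m,\,d>z}\mu(d)\log d$, the first swap of the $m$- and $d$-sums gives
\[
\sum_{m\leq y}\c E_z(m)\, e^{im\alpha}
=-\sum_{z<d\leq y}\mu(d)\log d\sum_{k\leq y/d} e^{ikd\alpha}.
\]
Interchanging once more so that $k$ is outside and $d$ inside (with the constraint $d\leq y/k$) yields
\[
-\sum_{1\leq k< y/z}\sum_{z<d\leq y/k}\mu(d)\log d\cdot e^{id(k\alpha)}.
\]

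For each fixed $k$, the plan is to bound the inner sum $T_k(\alpha):=\sum_{z<d\leq y/k}\mu(d)\log d\cdot e^{id(k\alpha)}$ uniformly in $\alpha$ by writing it as the difference of the sums over $d\leq \lfloor y/k\rfloor$ and $d\leq \lfloor z\rfloor$, and then applying the bound
\[
\sup_{\beta\in\R}\Big|\sum_{d\leq N}\mu(d)\log d\cdot e^{id\beta}\Big|\ll_A N(\log N)^{-A},\qquad N\geq 2.
\]
This last estimate is the key analytic input: it follows from Lemma~\ref{lem:davenportmobius} combined with Abel summation on the weight $\log d$, at the cost of increasing the exponent $A$ by one unit in the Davenport bound. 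Consequently
\[
|T_k(\alpha)|\ll_A \frac{y}{k}\left(\log\frac{y}{k}\right)^{-A}+z(\log z)^{-A}\ll_A \left(\frac{y}{k}+z\right)(\log z)^{-A},
\]
since any $k$ contributing to the outer sum satisfies $y/k\geq z$, so $(\log(y/k))^{-A}\leq (\log z)^{-A}$.

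Finally, summing over $k$ in the range $1\leq k<y/z$ will give
\[
\Big|\sum_{m\leq y}\c E_z(m)\,e^{im\alpha}\Big|\ll_A (\log z)^{-A}\Big(y\sum_{k<y/z}\tfrac{1}{k}+z\cdot\tfrac{y}{z}\Big)\ll_A y(\log y)(\log z)^{-A},
\]
as required; the harmonic sum produces the extra factor $\log y$, while the remaining $y$ is absorbed. The only technical points are the Abel-summation step used to carry the $\log d$ weight, and the handling of the lower truncation $d>z$ by subtraction, which is precisely what yields the $(\log z)^{-A}$ uniformity in the final estimate. Beyond these bookkeeping steps I do not foresee any substantial obstacle; the proof is essentially a mechanical application of the double swap and the M\"obius randomness input of Lemma~\ref{lem:davenportmobius}.
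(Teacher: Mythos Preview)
Your argument is correct: the double interchange of summation followed by Abel summation on the $\log d$ weight (using Lemma~\ref{lem:davenportmobius} with exponent $A+1$) and the harmonic sum over $k$ is exactly the standard derivation, and the edge cases for small $y$ or $z$ are harmless since the relevant sums then vanish or the right-hand side becomes vacuously large. The paper itself does not give a proof but simply refers to \cite[Eq.~(19.17)]{iwa}; your write-up is essentially the proof one finds there, so there is nothing to compare.
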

\begin{proof}
See \cite[Eq.~(19.17)]{iwa}.
\end{proof}   For a function $F : \Z \to \R$ we denote  \[ S_F(\alpha ):= \sum_{\substack{ c \in \Z   \\   |c| \leq (d+1) \c M^d H  }} F(c) \mathrm e^{i c \alpha }, \]
where    $\c M=\max\{k,m \}$. Recall that for $ t\in \R, H \in [1,\infty) $  the Dirichlet kernel is defined as  
$$ D_H( t):= \sum_{|c|\leq H } \mathrm e^{   i c t }.$$ 
We will also use  $ D^+_H( t):= 
 \sum _{0<c\leq H} \mathrm e^{   i c t }$.
\begin{lemma} \label {eq:La Villa Strangiato - Rush} For any integers $ k, m  $ and  any functions $f,g:\Z\to\R$ we   have    \[
\sum_{\substack{  P\in \Z[t]   ,\, P>0   \\| P   | \leq H  ,\, \deg(P)=d    } }   \hspace{-0,6cm}
f(P(k)) g(P(m)) \! =\!\frac{1}{4 \pi^2} 
\hspace{-0,12cm}
 \int\limits_{(-\pi,\pi]^2 }   
\hspace{-0,25cm}
\overline {S_f (\alpha _1 )}\overline{ S_g (\alpha _2 )}  
D^+_H (k^d\alpha_1 + m ^d \alpha_2)
  \prod_{j=0}^{d-1} D_H (k^j\alpha_1 + m ^j \alpha_2)  \,   \mathrm{d} \boldsymbol\alpha   .\] \end{lemma} 
\begin{proof} Firstly, we write 
\[\sum_{\substack{   | P   | \leq H     \\ P>0   } }  f(P(k))   g(P(m))   =  \sum_{|k_1| , |k_2| \leq (d+1) \c M^d H} f(k_1) g( k_2)
\sum_{\substack{   | P   | \leq H     \\ P>0   } }    \mathds 1 (k_1= P(k ) ) \mathds 1 (k_2= P(m ) ) .\] The following identity 
holds for all integers $r $ and $s$:
\[ \mathds 1( r = s ) =\frac{1}{2\pi } \int_{-\pi}^{\pi } \mathrm e^{ i (r-s) \alpha }   \mathrm d \alpha.  \] Using it twice turns the sum into 
\[      \frac{1}{4 \pi^2 } \int_{-\pi}^{\pi }\int_{-\pi}^{\pi }   
\sum_{   |k_1|  \leq (d+1) \c M^d H } f(k_1)  \mathrm  e^{- i  k_1 \alpha_1 } 
\sum_{    |k_2| \leq (d+1) \c M^d H   } g( k_2) \mathrm e^{- i k_2 \alpha_2}
\sum_{\substack{   | P   | \leq H     \\ P>0   } }  \mathrm  e^{ i  (P(k )  \alpha_1 +   P(m )  \alpha_2) }   \mathrm d  \alpha_1 \mathrm d  \alpha_2 . \]
The sums over $k_1$ and $k_2$ are equal to $\overline {S_f (\alpha _1 )}$ and  $\overline {S_g (\alpha _2 )}$, respectively.
To analyse the sum over $P $ we write $P(t) =\sum_{j=0}^d c_j t^j $ and recall that $c_d\in(0,H]$.
   We obtain
\[\sum_{\substack{   | P   | \leq H     \\ P>0   } }  \mathrm  e^{ i  (P(k )  \alpha_1+    P(m )  \alpha_2) } =D^+_H (k^d \alpha_1 + m ^ d \alpha_2)
\prod_{j=0}^{d-1} D_H (k^j \alpha_1 + m ^ j \alpha_2).\qedhere \]  
\end{proof} Before proceeding we recall a well-known result of Lebesgue \cite[Eq.~(12.1), p.~67]{MR1963498},
\beq{eq:lebesgue}{\int_{-\pi}^\pi |D_H( t)| \mathrm d t = O(\log H).} \begin{lemma} \label{lem:kunst der fuge}
For any integers $  k\neq m $ and any  functions $f,g:\Z\to\R$ we   have    \[   \sum_{\substack{   | P   | \leq H     \\ P>0   } } f(P(k)) g(P(m))  \ll 
\| S_f\|_\infty  S_{|g|}(0) H^{d-1} \frac{\c M (\log H )^2}{|k-m |},\] where $\| S_f\|_\infty  :=\max\{ |S_{f}( \alpha ) |  :  \alpha  \in   \R \} $, and the implied constant depends at most on $d$.  \end{lemma}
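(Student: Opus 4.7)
The plan is to start from the Fourier--Dirichlet integral identity in Lemma~\ref{eq:La Villa Strangiato - Rush} and to bound the resulting double integral by peeling off $L^\infty$ factors and then exploiting the $L^1$ bound on the Dirichlet kernel in~\eqref{eq:lebesgue}. First I would apply the triangle inequality inside the integral to pull out $|S_g(\alpha_1)|\leq \|S_g\|_\infty$, and bound $|S_f(\alpha_2)|\leq S_{|f|}(0)$ directly from the defining series of $S_f$. This reduces the task to estimating
\[
\int_{-\pi}^{\pi}\!\int_{-\pi}^{\pi}\prod_{j=0}^{d}\bigl|D_H(k^j\alpha_1+m^j\alpha_2)\bigr|\,\d\alpha_1\,\d\alpha_2.
\]

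Second, I would use the trivial uniform bound $|D_H(t)|\leq 2H+1$ on the $d-1$ factors corresponding to $j=2,\ldots,d$, which contributes the $H^{d-1}$ in the statement. The remaining two factors, for $j=0$ and $j=1$, invite the linear change of variables $u=\alpha_1+\alpha_2$, $v=k\alpha_1+m\alpha_2$; the Jacobian determinant $m-k$ is non-zero precisely because $k\neq m$, and its modulus is the source of the factor $1/|k-m|$ in the bound. The image of $[-\pi,\pi]^2$ under this map is a parallelogram contained in the rectangle $[-2\pi,2\pi]\times[-2\pi\c M,2\pi\c M]$, so the non-negativity of the integrand allows us to bound the transformed integral by
\[
\frac{1}{|k-m|}\int_{-2\pi}^{2\pi}|D_H(u)|\,\d u\;\cdot\;\int_{-2\pi\c M}^{2\pi\c M}|D_H(v)|\,\d v.
\]

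Finally, I would invoke the $2\pi$-periodicity of $|D_H|$ together with~\eqref{eq:lebesgue} to estimate each one-dimensional integral: the $u$-integral is $\ll \log H$, whereas the $v$-integral ranges over $2\c M$ consecutive periods and is therefore $\ll \c M\log H$. Assembling all of the factors yields the claimed bound $\|S_g\|_\infty S_{|f|}(0) H^{d-1}\c M(\log H)^2/|k-m|$. The step to watch is the change of variables: one must enlarge the image parallelogram to a coordinate rectangle to decouple the $u$- and $v$-integrals, and one must track carefully how the range of $v$ grows linearly in $\c M$, since a careless estimate --- for instance, using trivially $|D_H|\leq 2H+1$ on the $v$-factor --- would destroy the gain from the change of variables and spoil the $\c M$-dependence in the statement.
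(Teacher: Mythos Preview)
Your proposal is correct and follows essentially the same approach as the paper: apply Lemma~\ref{eq:La Villa Strangiato - Rush}, pull out $\|S_g\|_\infty$ and $S_{|f|}(0)$, bound the $d-1$ Dirichlet kernel factors with $j\geq 2$ trivially by $2H+1$, perform the linear change of variables $(u,v)=(\alpha_1+\alpha_2,\,k\alpha_1+m\alpha_2)$ with Jacobian $|k-m|$, enlarge the image parallelogram to the rectangle $[-2\pi,2\pi]\times[-2\pi\c M,2\pi\c M]$, and conclude via periodicity and~\eqref{eq:lebesgue}. The paper's argument is identical in structure and detail.
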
 \begin{proof} 
The bounds  $|S_g(\alpha)| \leq S_{|g|}(0) $,  $|D^+_H(\alpha) |\leq H, 
|D_H(\alpha) |
 \leq 1+ 2 H$ and Lemma \ref{eq:La Villa Strangiato - Rush} give
$$\sum_{\substack{   | P   | \leq H     \\ P>0   } } f(P(k)) g(P(m)) \ll  \| S_f\|_\infty   S_{|g|}(0) H^{d-1}  \int_{(-\pi,\pi]^2 }  | D_H (  \alpha_1 +  \alpha_2)  |  
| D_H (k \alpha_1 + m  \alpha_2)  |  \mathrm{d} \boldsymbol \alpha .$$ The  change of variables 
$t_1=\alpha_1+\alpha_2$, $t_2=  k \alpha_1 + m  \alpha_2$ shows that the integral is at most
\[ 
\frac{1}{|k-m|} \int_{-2 \pi }^{2 \pi } \int_{-2 \pi \c M } ^ {2 \pi \c M } |D_H(t_1) | |D_H(t_2) | \mathrm d \b t    . \]
The Dirichlet kernel $D_H(t)$ is an even and $2 \pi$-periodic   function of $t$, thus 
 \[ \int_{-2 \pi }^{2 \pi }
 \int_{-2 \pi \c M } ^ {2 \pi \c M } |D_H(t_1) | |D_H(t_2) | \mathrm d \b t =
4 \c M  \int_{-\pi  }^{  \pi } \int_{-\pi  } ^ {  \pi   } |D_H(t_1) | |D_H(t_2) | \mathrm d \b t  . \] The proof concludes by invoking Lebesgue's result \eqref{eq:lebesgue}.
\end{proof}
 \begin{remark} \label{rem:allbut2} The proof of Lemma \ref {lem:kunst der fuge}  makes clear that in order to prove
Theorem \ref{thm:mainresulthalb}
 one needs to range over only two random coefficients and we are allowed to have   the remaining $d-1$ coefficients  fixed. 
   \end{remark}
 \begin{remark} \label{rem:allbut45} It would be interesting to study the $N$-th moment 
$\sum_{  \b P    }  
\l( \theta_\b P(x) -  \mathfrak S_\b P(x)  x\r)^N $
in \eqref{eq:aha2} for $N \geq 3$. 
 The proof of Lemma \ref {lem:kunst der fuge}  can be adapted for this problem as long as $d$ is not too small compared to $N$. 
For example, when $n=1$ one would need to take $d \geq N-1$. \end{remark}

\begin{proposition}
\label
{prop:oupsstat} 
Fix any  $d\geq 1$, $A>0$,
and 
 $\delta_1, \delta_2>0 $ with $\delta_1<1$.
Then for all 
$  z, H  \geq 1 $ 
such that $ H^{\delta_1} \leq z \leq  H  $
and all natural numbers $ k\neq m$ 
satisfying
\[
k, m  \leq 
 (\log H)^{\delta_2}
\]
  we have 
\[    \c G_{k,m}(H;d)
=
\sum_{\substack{   P\in \Z[t]   ,\, \deg(P)=d \\ | P   | \leq H  ,\, P>0   } }
\Lambda_z(P(k))
\Lambda_z(P(m)) 
+O_A\l(
\frac{ H^{d+1}  }{(\log H)^A}
 \r)
,\] where the implied constant does not depend on $k, m , H $ and $z$.
\end{proposition}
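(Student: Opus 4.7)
The plan is to establish the proposition by writing the simple identity
\[
\Lambda(P(k))\Lambda(P(m)) - \Lambda_z(P(k))\Lambda_z(P(m)) = \Lambda_z(P(k))\,\c E_z(P(m)) + \c E_z(P(k))\,\Lambda(P(m)),
\]
and then bounding each of the two resulting ``cross sums'' over $\b P$ by combining Lemma~\ref{lem:kunst der fuge} with the M\"obius cancellation supplied by Corollary~\ref{cor:davenpmulog}. Recall that Lemma~\ref{lem:kunst der fuge} bounds any sum of the shape $\sum_{P} g(P(k))f(P(m))$ by $\|S_g\|_\infty \cdot S_{|f|}(0) \cdot H^{d-1}\c M(\log H)^2/|k-m|$; by the trivial symmetry $(g,k)\leftrightarrow(f,m)$ of the left-hand side one also has the bound with $g$ and $f$ interchanged. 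In each of the two cross sums I would invoke whichever version places $\c E_z$ in the sup-norm slot.

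The decisive input is Corollary~\ref{cor:davenpmulog} applied with the value $y=(d+1)\c M^d H$ dictated by the definition of $S_{\c E_z}$. Since $\c M\leq(\log H)^{\delta_2}$ one has $y\ll H(\log H)^{d\delta_2}$, and since $z\geq H^{\delta_1}$ one has $\log z\geq \delta_1\log H$, so that
\[
\|S_{\c E_z}\|_\infty \;\ll_B\; H(\log H)^{d\delta_2+1}(\log H)^{-B}
\]
for any $B>0$. In the opposite slot, the crude pointwise estimates $|\Lambda(n)|\leq \log n$ and $|\Lambda_z(n)|\leq \sum_{d\mid n,\,d\leq z}\log d$, together with a swap of summation and the elementary bound $\sum_{d\leq z}(\log d)/d \ll (\log z)^2$, yield $S_{|\Lambda|}(0),\,S_{|\Lambda_z|}(0)\ll H(\log H)^{d\delta_2+2}$. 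Multiplying the three factors in Lemma~\ref{lem:kunst der fuge} and using $|k-m|\geq 1$, each cross sum is at most
\[
\ll_B \; H^{d+1}(\log H)^{2d\delta_2+\delta_2+5-B}.
\]
Choosing $B=A+2d\delta_2+\delta_2+5$ delivers the claimed error term $O_A\!\left(H^{d+1}(\log H)^{-A}\right)$.

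The main obstacle is purely bookkeeping: one must verify that the polylog losses coming from (i) the ambient size $N=(d+1)\c M^d H$ of the Fourier interval, (ii) the trivial $L^1$ bound in the unsaved slot, and (iii) the factor $\c M(\log H)^2$ produced by Lemma~\ref{lem:kunst der fuge}, are all controlled by a fixed power of $\log H$. This is exactly what the hypotheses $k,m\leq(\log H)^{\delta_2}$ and $z\geq H^{\delta_1}$ guarantee, and it is the reason why the arbitrarily large savings $(\log z)^{-B}\ll (\log H)^{-B}$ provided by Davenport's estimate can be made to dominate. The non-degeneracy hypothesis $k\neq m$ enters only through Lemma~\ref{lem:kunst der fuge}, where it ensures that the change of variables $(\alpha_1,\alpha_2)\mapsto(\alpha_1+\alpha_2,\,k\alpha_1+m\alpha_2)$ is invertible and hence separates the two Dirichlet kernels whose Lebesgue norms account for the logarithmic factors in the bound.
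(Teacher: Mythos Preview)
Your proof is correct and follows essentially the same route as the paper: decompose using $\Lambda=\Lambda_z+\c E_z$, then feed each cross term into Lemma~\ref{lem:kunst der fuge} with $\c E_z$ placed in the sup-norm slot so that Corollary~\ref{cor:davenpmulog} supplies an arbitrary $(\log H)^{-B}$ saving that overwhelms all polylogarithmic losses. The only cosmetic difference is that the paper expands into three error terms (two of type $\c E_z\Lambda_z$ plus one $\c E_z\c E_z$) whereas your telescoping identity produces just two (one $\Lambda_z\c E_z$ and one $\c E_z\Lambda$); both decompositions are equally valid and the final arithmetic is the same up to harmless shifts in the exponent bookkeeping.
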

\begin{proof} 
For both choices
$f=  \c E_z   $ and  $f = \Lambda_z$ we have 
$|f(t)|
\leq 
\sum_{m\mid t }    
 \log m
\leq 
(\log t)\tau(t)$, where $\tau$ is the divisor function.
In particular, we get  
$\sum_{t\leq y}
|f(t)|
\ll y 
(\log  y)^2
$, which  
shows that $$S_{|f|}(0)\ll H (\log H)^2 \c M^d \ll H (\log H)^{2+d\delta_2 }.$$ 
Furthermore, by Corollary \ref {cor:davenpmulog} we have 
\beq{eq:2rgh8}  {  \|      S_{\c E_z} \|_\infty  \ll  _C \c M^d  H (\log H  ) (\log z)^{-C}\ll_{\delta_1}    H   (\log H)^{1+d \delta _2 -C}}
for every $C>0$.
Therefore, 
by Lemmas~\ref{eq:La Villa Strangiato - Rush}
and \ref{lem:kunst der fuge}
we obtain
 \[ 
\l| 
\sum_{ | P   | \leq H, P>0 }
\c E_z(P(k))
\c E_z(P(m))\r|
,\l|
\sum_{ | P   | \leq H, P>0   } 
\c E_z(P(k))
\Lambda_z(P(m)) \r|  
 \ll    \frac{ \c M H^{d+1 } }{   (\log H )^{C-  2 d \delta_2       -5  }  } 
 .\]    Using $\c M \leq (\log H)^{\delta_2 }$ and letting  $A=C-  (2 d +1) \delta_2       -5$ gives the required error term. The proof now concludes by
recalling that $\Lambda=\Lambda_z+\c E_z$.
\end{proof}
For later use we need a version of this result for one polynomial value instead of two but with the additional condition 
that the polynomial is in an arithmetic progression.   
\begin{lemma} \label{lem:unifimprog}
Fix any  $d\geq 1   $
and 
 $\delta_1, \delta_2>0 $ with $\delta_1<1$.
Then for all $  z, H  \geq 1, A>0 $, all natural numbers $ k ,  \Omega $, and all $ R \in (\Z/\Omega )[t]$ of degree at most  $d$
such that
\[
k   \leq 
 (\log H)^{\delta_2}, \
 H^{\delta_1} \leq z \leq  H  ,  \ \Omega \leq H 
\]
  we have 
\[    
\sum_{\substack{   | P   | \leq H, \,  P>0  \\ \deg(P)=d \\   P\equiv R \md{\Omega  }  } }
\Lambda (P(k)) 
- 
\sum_{\substack{   | P   | \leq H, \,  P>0  \\ \deg(P)=d \\   P\equiv R \md{\Omega  }     } }
\Lambda_z(P(k))  = O_A\l(
\frac{ H^{d+1}  }{(\log H)^A}
 \r)
,\] where the implied constant does not depend on $k, m , H , R, \Omega$ and $z$.
\end{lemma}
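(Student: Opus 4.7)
The plan is to follow the template of Proposition~\ref{prop:oupsstat}, adapted to the single-polynomial-value setting and with the added coefficient congruence constraint. Writing $\Lambda=\Lambda_z+\c E_z$, it suffices to establish the bound
\[
\Sigma := \sum_{\substack{|P|\leq H,\, P>0 \\ \deg(P)=d,\, P\equiv R\md{\Omega}}} \c E_z(P(k)) \ll_A \frac{H^{d+1}}{(\log H)^A}.
\]

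First I would establish a single-variable Fourier representation analogous to Lemma~\ref{eq:La Villa Strangiato - Rush}. Parametrizing $P(t)=\sum_{j=0}^d c_j t^j$ by its coefficients, and writing $R(t)=\sum_j r_j t^j$, the constraint $P\equiv R\md{\Omega}$ becomes $c_j\equiv r_j \md{\Omega}$ for each $j$. A single application of the Fourier identity $\mathds 1(P(k)=k_1)=\frac{1}{2\pi}\int_{-\pi}^\pi e^{i(P(k)-k_1)\alpha}\,d\alpha$ yields
\[
\Sigma=\frac{1}{2\pi}\int_{-\pi}^{\pi}\overline{S_{\c E_z}(\alpha)}\, T^+_d(k^d \alpha)\prod_{j=0}^{d-1}T_j(k^j\alpha)\,d\alpha,
\]
where $T_j(\beta)=\sum_{|c|\leq H,\, c\equiv r_j\md{\Omega}}e^{ic\beta}$ and $T^+_d$ is its one-sided analogue accommodating $c_d>0$. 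After the substitution $c=r_j+\Omega c'$ each $T_j$ reduces, up to a phase, to a Dirichlet kernel of length $\asymp H/\Omega$ in the rescaled variable $\Omega\beta$.

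The main obstacle, and the chief point of departure from Lemma~\ref{lem:kunst der fuge}, is the bound on the kernel integral: with only one Fourier variable $\alpha$ instead of two, the change-of-variables trick exploiting $k\neq m$ is no longer available. I would instead bound $d-1$ of the factors trivially in $L^\infty$ (each $\ll H/\Omega$) and handle the remaining two via Cauchy--Schwarz combined with Parseval's identity. A direct computation using $2\pi$-periodicity gives $\int_{-\pi}^{\pi}|T_j(k^j\alpha)|^2\, d\alpha \ll H/\Omega$, uniformly in $j$ and $k$, and consequently
\[
\int_{-\pi}^{\pi}\prod_{j=0}^{d}|T_j(k^j\alpha)|\, d\alpha \ll (H/\Omega)^d \leq H^d.
\]

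Finally, I would invoke Corollary~\ref{cor:davenpmulog} to obtain $\|S_{\c E_z}\|_\infty \ll_C k^d H(\log H)(\log z)^{-C}$ for any $C>0$. Using $z\geq H^{\delta_1}$ and $k\leq(\log H)^{\delta_2}$, and combining with the integral bound above, yields
\[
\Sigma \ll_C H^{d+1}(\log H)^{1+d\delta_2 - C},
\]
after absorbing $\delta_1^{-C}$ into the implied constant. Taking $C=A+1+d\delta_2$ completes the proof. The implied constants are manifestly uniform in $k$, $R$, $\Omega$, and $z$, as required.
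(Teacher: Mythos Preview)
Your proof is correct and structurally parallels the paper's own argument: both set up the one-variable Fourier representation, bound $\|S_{\c E_z}\|_\infty$ via Corollary~\ref{cor:davenpmulog}, and then control the kernel integral $\int_{-\pi}^\pi \prod_{j=0}^d |T_j(k^j\alpha)|\,d\alpha$. The only technical divergence is in this last step. The paper bounds every factor except $j=0$ trivially by $O(H)$ and handles the remaining one by writing $c=r_0+\Omega b$ to recognise it as a Dirichlet kernel $D_{H/\Omega}(\Omega t)$, then applying Lebesgue's $L^1$ estimate~\eqref{eq:lebesgue} after a linear change of variables, obtaining $\ll H^d\log H$. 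You instead bound $d-1$ factors trivially by $O(H/\Omega)$ and treat the last two via Cauchy--Schwarz and Parseval, obtaining the slightly sharper $\ll (H/\Omega)^d\leq H^d$. Either route suffices; yours has the virtue of avoiding the Dirichlet kernel identification altogether, while the paper's stays closer to the machinery already set up in Lemma~\ref{lem:kunst der fuge}.
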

The crucial  point is that the estimate is uniform in the progression.
\begin{proof} 
Using that $\Lambda-\Lambda_z=\c E_z$ turns the left  hand side into 
 \[
\sum_{\substack{   | P   | \leq H, \,  P>0  \\ \deg(P)=d \\   P\equiv R \md{\Omega  }  } }
\c E_z(P(k))
=    \frac{1}{2 \pi  } \int_{-\pi}^{\pi } 
S_{\c E_z}(-\alpha_1) 
 \bigg(\sum_{\substack{   | P   | \leq H, \,  P>0  \\ \deg(P)=d \\   P\equiv R \md{\Omega  }  } }
  \mathrm  e^{ i  P(k )  \alpha_1    }   \bigg)
\mathrm d  \alpha_1  . \] Writing  $P(t)=\sum_{j=0}^d c_j t^j $ and 
choosing integers $0\leq r_j<\Omega $ such that  $R(t)\equiv \sum_{j=0}^d r_j t^j \md \Omega$,
converts    the right hand  sum over $ P$ into  
$$ \bigg(\sum_{\substack{ 0<c_d \leq H \\ c_d\equiv r_d\md \Omega}} \mathrm e^{i c_dk^d\alpha_1 }\bigg)
\prod_{j=0}^{d-1} \bigg(\sum_{\substack{ |c_j|\leq H \\ c_j\equiv r_j\md \Omega}} \mathrm e^{i c_jk^j\alpha_1 }\bigg).$$
For each $j \neq 0$ we bound the sum over $c_j$ trivially by $O(H)$.
Using~\eqref{eq:2rgh8} to bound $S_{\c E_z}$ gives
 \[
\sum_{\substack{   | P   | \leq H, \,  P>0  \\ \deg(P)=d \\   P\equiv R \md{\Omega  }  } }
\c E_z(P(k))
  \ll_{\delta_1}    H   (\log H)^{1+d \delta _2 -C} 
H^{d}   \int_{-\pi}^{\pi } 
 \bigg|\sum_{\substack{ |c_0|\leq H \\ c_0\equiv r_0\md \Omega}} \mathrm e^{i c_0\alpha_1 }\bigg|
\mathrm d  \alpha_1 
.\] It suffices to prove that the integral is $O(\log H)$, since taking $C$ large enough compared to $d\delta _2$ will complete the proof.

Letting $c_0=b\Omega+r_0$ makes the sum over $c_0$ equal to  
\[  \mathrm e^{i   r_0 \alpha_1 }  \sum_{\substack{ |b+r_0/\Omega  | \leq H/\Omega      }} \mathrm e^{i b     \Omega \alpha_1  } .\]  
Since $|r_0|\leq \Omega$, the terms in the sum over $b $ that do not satisfy $|b|\leq H/\Omega$ are at most $O(1)$ with an absolute implied constant.
Hence, \[ \int_{-\pi}^{\pi } 
 \bigg|\sum_{\substack{ |c_0|\leq H \\ c_0\equiv r_0\md \Omega}} \mathrm e^{i c_0\alpha_1 }\bigg|
\mathrm d  \alpha_1 
\ll 1+ \int_{-\pi}^{\pi } 
 \bigg| \sum_{\substack{ |b  | \leq H/\Omega      }} \mathrm e^{i b     \Omega \alpha_1  } \bigg|
\mathrm d  \alpha_1 =
 1+ \frac{1}{\Omega}
\int_{-\pi\Omega}^{\pi \Omega} 
|D_{H/\Omega }(t)|
 \mathrm d  t
.\] Since $|D_{H/\Omega }(t)|$ is even and has period $2\pi $ we can bound the integral  by 
 $ \ll   
\int_{-\pi}^{\pi } 
|D_{H/\Omega }(t)|
 \mathrm d  t$. Alluding to  Lebesgue's result \eqref{eq:lebesgue} is now sufficient to finish the proof.   \end{proof} 
\subsection{The main term}
\label
{s:mainterm234}
It now remains to estimate the sum 
involving $\Lambda_z$ 
in 
Proposition~\ref{prop:oupsstat}.
This will be straightforward but somewhat involved
because 
we need to  keep track of the dependence of the error term on the  
parameters $k$ and $m $. 
\begin{lemma}
\label
{lem:initialtransform} 
For all $z,   H\geq 1 $ 
with $z^2 \leq H$ 
and all distinct $k,m\in\N$ we have
\begin{align*}
\sum_{\substack{   P\in \Z[t]     \\ | P   | \leq H ,\, P>0\\   \deg(P)=d  } }  \hspace{-0,2cm}
\Lambda_z(P(k))
\Lambda_z(P(m)) 
&=
2^d 
H^{d+1}
\hspace{-0,5cm}
 \sum_{\substack{ c, l_0 \in \N  \\ cl_0 \leq z \\ \gcd(c,l_0)=1}}
  \hspace{-0,3cm}
\frac{ \mu(c)  \mu(l_0)^2 \gcd(l_0, k-m) }{  (cl_0)^2 }
\l( \sum_{\substack{ t \in \N   \\ c l_0  t  \leq z \\  \gcd(t, c l_0  )=1    }} 
\hspace{-0,5cm}
   \frac{   \mu(  t) \log (cl_0  t )    }{    t  }\r)^2
\\
&+
O (
H^d 
z^3 
)
,\end{align*} 
where  the  implied constant depends only on $d$.
\end{lemma}
\begin{proof}
Write $\b c=(c_0,\ldots,c_d)$ and $P(t) =P_{\b c}(t)= \sum_{i=0}^d c_i t^i $.
The left hand side becomes
\beq{eq:ref12}{
\sum_{k_1,k_2 \leq z }
\mu(k_1)
\mu(k_2)
\log(k_1)
\log(k_2)
\sum_{\substack{
\b c \in (\Z \cap [-H, H] )^{d+1}, \, c_d > 0 \\
k_1 \mid P_\b c(k), \,
k_2 \mid P_\b c(m)
}}
1
.}
We only need to consider the terms corresponding to square-free $k_1$ and $k_2$.
Then $
l_0=\gcd
(k_1,k_2),
l_1=k_1/l_0,
l_2=k_2/l_0$
are square-free and pairwise 
coprime.
The simultaneous conditions $k_1 \mid P_\b c(k)$, $k_2 \mid P_\b c(m)$ can be written equivalently as 
$$ P_\b c (k )  \equiv P_\b c (m) \equiv 0 \md{l_0}, \ l_1  \mid    P_\b c (k), \  l_2  \mid    P_\b c (m)  .$$
Then splitting the summation over each $c_i $ in arithmetic progressions modulo $l_0l_1l_2$ turns the sum over $\b c $ into 
\[
\sum_{\substack{
\b b \in (\Z \cap [0, l_0 l_1 l_2) )^{d+1} 
\\
P_\b b (k )  \equiv P_\b b (m) \equiv 0 \md{l_0}
\\
l_1  \mid    P_\b b (k), \,
l_2  \mid    P_\b b (m) 
 }}
\#
\l
\{
\b c \in (\Z \cap [-H, H] )^{d+1}: c_d > 0, 
\b c 
\equiv \b b \md{l_0 l_1 l_2} 
\r\}
.\]
Since 
$z^2 \leq H$
we have 
 $ l_0 l_1 l_2 \leq k_1 k_2 \leq z^2 \leq H$.
Therefore, the summand $\#\{\b c \}$
 is \[
\frac{1}{2} 
 \l(  \frac{2H}{l_0 l_1 l_2} \r)^{d+1} 
+O\l(\l(  \frac{H}{l_0 l_1 l_2} \r)^{d}     \r).\] 
By the Chinese Remainder Theorem, 
the number of terms in the  sum over $\b b $ is   
 \begin{align*}
\prod_{\substack{p\, {\rm prime}\\p\mid l_0}} \#\{\b b\in \F_p^{d+1}:P_{\b b }(k)= P_{\b b }(m)=0\}
& \prod_{\substack{p\, {\rm prime}\\p\mid l_1}} \#\{\b b\in \F_p^{d+1}:P_{\b b }(k)= 0\}
\\ \times  & \prod_{\substack{p\, {\rm prime}\\p\mid l_2}} \#\{\b b\in \F_p^{d+1}:P_{\b b }(m)=0\},\end{align*}
where we used that each $l_i $ is square-free and that $\gcd(l_i,l_j)=1$ for all $i\neq j $.
Fixing all $b_i$ except 
$b_0$ shows that 
 $$ \#\{\b b\in \F_p^{d+1}:P_{\b b }(k) =0\}=
 \#\{\b b\in \F_p^{d+1}:P_{\b b }(m)= 0\}=p^d
.$$   Fixing all $b_i$ except 
$b_0$ and $b_1$ shows that  
$ \#\{\b b\in \F_p^{d+1}:P_{\b b }(k)= P_{\b b }(m)=0\}$
 equals $p^{d-1}$ if $p \nmid  k-m $ and $p^d $ if $p \mid k-m$.
Hence, the number of terms in the  sum over $\b b $ is   
\[
(l_1l_2)^d 
\prod_{\substack{{\rm prime}\, p\mid l_0\\ p\mid   k-m }}p^d
\prod_{\substack{{\rm prime}\, p\mid l_0\\ p\nmid   k-m }} p^{d-1}
=(l_1l_2)^d l_0^{d-1} \gcd(l_0, k-m).\]Hence,~\eqref{eq:ref12} becomes   \[
2^d H^{d+1}
\hspace{-0,6cm}
\sum_{\substack{l_0, l_1,l_2 \in \N
 \\ 
\gcd(l_i,l_j)=1\, \text{\rm for}\, i\neq j
\\
 l_0 l_1,\, l_0 l_2 \leq z
    }}
\hspace{-0,6cm}
\mu(l_0)^2
\mu(l_1)
\mu(l_2)
\log (l_0 l_1)
\log (l_0 l_2) 
 \frac{ \gcd(l_0, k-m)
}{
  l_0^2 l_1 l_2
}
\]
up to a quantity whose 
modulus is 
 \beq{eq:odfj78}
{
\ll
H^d 
\hspace{-0,6cm}
\sum_{\substack{l_0, l_1,l_2 \in \N
 \\ 
\gcd(l_i,l_j)=1\, \text{\rm for}\, i\neq j
\\
 l_0 l_1,\, l_0 l_2 \leq z
    }}
\hspace{-0,6cm}
\mu(l_0)^2
\mu(l_1)^2
\mu(l_2)^2
\log( l_0 l_1)
\log (l_0 l_2)  
  \frac{ \gcd(l_0, k-m)}{l_0 }
.}
The condition $\gcd(l_1,l_2)=1$ has  indicator function given by 
 $$
\sum_{\substack{ c\in \N \\ c\mid \gcd(l_1,l_2) }} \mu(c) 
=\sum_{\substack{ c, t_1, t_2 \in \N \\ l_1=c t_1 ,\, l_2 =c t_2  }} \mu(c) 
,$$ hence
the sum over $l_0,l_1,l_2$ in the main term can be written as 
 \begin{align*}
&\sum_{\substack{c, l_0, t_1,t_2 \in \N
 \\  \gcd(l_0,c t_1 t_2 )=1  \\  l_0 c t_1,\, l_0 c t_2 \leq z    }} 
 \mu(l_0)^2 \mu(c) \mu(c t_1) \mu(ct_2) \log (l_0 c t_1) \log (l_0 c t_2)   \frac{  \gcd(l_0, k-m) }{  l_0^2 c^2 t_1 t_2 }
\\=&
\sum_{\substack{c  \in \N\cap [1,z]   }}\frac{\mu(c  ) }{c^2}
\sum_{\substack{ l_0, t_1,t_2 \in \N
 \\  \gcd(l_0,c t_1 t_2 )=1 \\  \gcd(c,  t_1  t_2)=1  \\  l_0 c t_1,\, l_0 c t_2 \leq z    }} 
 \mu(l_0)^2 \mu(  t_1) \mu(t_2) \log (l_0 c t_1) \log (l_0 c t_2)   \frac{  \gcd(l_0, k-m) }{  l_0^2  t_1 t_2 }
,\end{align*} where we used     that the presence of $\mu(c t_1)\mu(c t_2)$  
 forces $\gcd(c,t_1t_2)=1$ and $\mu(c t_1)\mu(c t_2)=\mu(c)^2\mu( t_1)\mu(  t_2)$.
The variables $t_1,t_2$ in the last sum are now independent hence we get the sum in the lemma.
Turning to~\eqref{eq:odfj78}, we use  
  $ \gcd(l_0, k-m)\leq l_0$ to bound it by 
   \[
\ll 
H^d 
 \sum_{\substack{l_0, l_1,l_2 \in \N
 \\ 
  l_0 l_1, \, l_0 l_2 \leq z
    }}
 \mu(l_0)^2
\mu(l_1)^2
\mu(l_2)^2
\log( l_0 l_1)
\log (l_0 l_2)   
\ll H^d (\log z)^2\l( \sum_{\substack{l_0, l_1 \in \N
 \\ 
  l_0 l_1\leq z 
    }}1\r)^2
\ll H^d z^2(\log z)^4
, \] which  completes the proof. \end{proof}

Our aim is now to prove asymptotics for the sum over $t$
in the right hand side of the equation in 
Lemma~\ref{lem:initialtransform}. We need the 
 following lemma.
\begin{lemma}
\label
{lem:pntconseq}
Fix any $A>0$.
Then 
for all $T\geq 1 $ and $q\in \N \cap [1,T^{1/2}]$
we have 
\[\sum_{\substack{ t\leq T/q \\ \gcd(t,q)=1}}\frac{\mu(t)\log (qt)}{t} 
=-\frac{q}{\phi(q)}+
O_A((\log  T )^{-A})
,\]
where the implied constants
depend only on $A$.
\end{lemma}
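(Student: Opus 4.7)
The plan is to reduce both claimed estimates to the classical case $q=1$, which is a standard equivalent of the prime number theorem, and then reinsert the coprimality condition via a Dirichlet-convolution identity. First, by partial summation applied to the PNT-with-remainder bounds $\sum_{t\leq T}\mu(t) = O_A(T(\log T)^{-A})$ and $\sum_{t\leq T}\mu(t)\log t = -T + O_A(T(\log T)^{-A})$ one obtains
\[\sum_{t\leq T}\frac{\mu(t)}{t}=O_A((\log T)^{-A}), \qquad \sum_{t\leq T}\frac{\mu(t)\log t}{t}=-1+O_A((\log T)^{-A});\]
the value $-1$ is the limit of $\zeta'(s)/\zeta(s)^2$ as $s\to 1$, after the pole of $\zeta'$ cancels the pole of $\zeta^2$.

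Next, the Euler product identity $\prod_{p\nmid q}(1-p^{-s}) = \zeta(s)^{-1}\prod_{p\mid q}(1-p^{-s})^{-1}$ translates into the convolution formula
\[\mu(n)\,\mathds{1}[\gcd(n,q)=1] \,=\, \sum_{\substack{b\mid n\\\rad(b)\mid q}}\mu(n/b),\]
and hence
\[\sum_{\substack{t\leq T\\\gcd(t,q)=1}}\frac{\mu(t)}{t}=\sum_{\substack{b\leq T\\\rad(b)\mid q}}\frac{1}{b}\sum_{a\leq T/b}\frac{\mu(a)}{a},\]
with an analogous decomposition for the logarithmic version via $\log t = \log a + \log b$. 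I truncate the outer sum at $b\leq T^{1/2}$: in this range the $q=1$ case makes each inner sum $O_{A'}((\log T)^{-A'})$ for any $A'>0$, while the outer factor is controlled by $\sum_{\rad(b)\mid q}1/b = q/\phi(q)\ll\log\log T$, producing the desired $O_A((\log T)^{-A})$ contribution. In the logarithmic identity the leading $-1$ combines with $\sum_{b\leq T^{1/2},\,\rad(b)\mid q}1/b$ to give the main term $-q/\phi(q)$ up to a tail, and the cross term involving $\log b$ is absorbed into the error by the same estimates.

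The main obstacle is showing that the smooth-number tail
\[\sum_{\substack{b>T^{1/2}\\\rad(b)\mid q}}\frac{1}{b}\]
is $O_A((\log T)^{-A})$ uniformly for $q\leq T$. This is a rate-of-convergence statement for the Euler product $\prod_{p\mid q}(1-1/p)^{-1}$; it can be obtained either by Rankin's trick with shifted exponent $\sigma = 1-c/\log T$, chosen so that $p^{-\sigma}$ remains comparable to $p^{-1}$ for all $p\mid q\leq T$, or by summing the geometric tails of each Euler factor $\sum_{p^k>B}p^{-k}$ separately and multiplying out. Combined with the uniform bound $\bigl|\sum_{a\leq X}\mu(a)/a\bigr|\ll 1$, this controls the contribution from $b>T^{1/2}$ and completes the proof.
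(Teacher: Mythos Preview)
The paper does not actually prove this lemma: it is stated as a quotation of standard consequences of the prime number theorem, with a pointer to an exercise in Montgomery--Vaughan. Your proposal is therefore a genuine proof sketch where the paper offers none, and the overall strategy---reduce to $q=1$ via the convolution identity $\mu\cdot\mathds 1_{(\cdot,q)=1}=\mu*\mathds 1_{\rad(\cdot)\mid q}$ and split the $b$-sum at $T^{1/2}$---is sound. Two details, however, do not work as written.

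First, the intermediate formula $\sum_{t\le T}\mu(t)\log t=-T+O_A(T(\log T)^{-A})$ is false. The Dirichlet series of $\mu(n)\log n$ is $\zeta'(s)/\zeta(s)^2$, which is \emph{regular} at $s=1$ (with value $-1$), so the summatory function has no linear main term: one has $\sum_{t\le T}\mu(t)\log t=O_A(T(\log T)^{-A})$. Indeed, if your formula held, partial summation would inject a spurious $-\log T$ into $\sum_{t\le T}\mu(t)\log t/t$. The constant $-1$ in the $q=1$ estimate arises instead as the value of the convergent integral $\int_1^\infty S(u)u^{-2}\,du$. This slip is harmless for the rest of your argument, since the $q=1$ conclusions are classical and you only use them as inputs.

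Second, and more seriously, your Rankin parameter $\sigma=1-c/\log T$ with $c$ fixed does not produce decay in the tail $\sum_{b>T^{1/2},\,\rad(b)\mid q}b^{-1}$: it gives $(T^{1/2})^{-(1-\sigma)}=e^{-c/2}$, a constant, multiplied by a product comparable to $q/\phi(q)\ll\log\log T$. To repair this, either take $c=c(T)\asymp A\log\log T$ so that $T^{-c/(2\log T)}=(\log T)^{-A/2}$, or use a fixed shift such as $\sigma=1/2$, which gives
\[
\sum_{\substack{b>T^{1/2}\\\rad(b)\mid q}}\frac{1}{b}\le T^{-1/4}\prod_{p\mid q}\frac{1}{1-p^{-1/2}}\le T^{-1/4}\,C^{\omega(q)}\le T^{-1/4+O(1/\log\log T)},
\]
using $\omega(q)\ll\log T/\log\log T$ from $q\le T$. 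Either choice yields the required $O_A((\log T)^{-A})$, and the same device handles the weighted tail $\sum_{b>T^{1/2}}(\log b)/b$ arising in the cross term.
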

\begin{proof}
This can be deduced directly from
\beq{eq:easytoprove}{
\sum_{\substack{ t\leq T\\ \gcd(t,q)=1 }}
\frac{\mu(t) \log t }{ t }
=-\frac{q}{\phi(q)}
+O_A((\log  T )^{-A})
\
\text{ and }
\ 
\sum_{\substack{ t\leq T\\ \gcd(t,q)=1 }}
\frac{\mu(t)   }{ t }
=
O_A((\log  T )^{-A})
,}
which are consequences of the prime number  
theorem, see~\cite[Ex.~17, p.~185]{MR2378655}. 
\end{proof}

Recall the following standard bounds from~\cite[Thm.~2.9, Thm.~2.11]{MR2378655}:
 \beq
{eq:stndrd}
{
\frac{1}{\phi(n) } \ll \frac{\log \log n}{n},
\
\
\
\tau(n)\leq n^{O(\frac{1}{\log \log n })}
.}
 \begin{lemma}
\label
{lem:mobsumtn}
Keep the setting of  Lemma~\ref{lem:initialtransform} and fix an arbitrary positive constant $A$.
Then  the sum over the $c, l_0 $ 
in Lemma~\ref{lem:initialtransform} equals 
 \[  
\prod_{{\rm prime}\,p\mid k-m } \frac{p}{p-1}
 +O_A \l(\frac{|k-m|}{(
\log z
)^{A} }\r) ,\]    where the implied constant does not depend on $k, m , z $ and $H$.
\end{lemma}
\begin{proof} To apply Lemma~\ref{lem:pntconseq} we must have $c l_0\leq z^{1/2} $.
Using  the bound $\sum_{n\leq z } 1/n \ll \log z $ we see that the contribution of the terms failing this condition is 
in modulus at most 
\[
 \sum_{\substack{ c, l_0 \in \N  \\ cl_0 >  z^{1/2} }}
  \frac{    |k-m| }{  (cl_0)^2 }
\l( \sum_{\substack{    t  \leq z    }} 
    \frac{    \log  z   }{    t  }\r)^2
\ll |k-m| (\log z)^4
\sum_{s>  z^{1/2}} \frac{\tau(s) }{s^2}
,\] where we  write $s=cl_0$. By~\eqref{eq:stndrd}
the sum over $ s $ is $\ll \sum_{s>\sqrt z } s^{-3/2} \ll z^{-1/4}$, which is satisfactory.
By Lemma~\ref{lem:pntconseq} the remaining terms make the following contribution:
\[
 \sum_{\substack{ c, l_0 \in \N  \\ cl_0 \leq z^{1/2} \\ \gcd(c,l_0)=1 }}
  \frac{ \mu(c)  \mu(l_0)^2 \gcd(l_0, k-m) }{  (cl_0)^2 }
\l( \frac{(cl_0)^2}{\phi(cl_0)^2}
+O_A\l(\frac{1}{(\log z )^A}\r)
\r)^2
.\] 
The error term is  \[ \ll \frac{1}{(\log z )^A}  \sum_{\substack{ c, l_0 \in \N   }}   \frac{   | k-m| }{  (cl_0)^2 }   \ll \frac{ | k-m|}{(\log z )^A} . \]
The main term equals 
 \[ \sum_{\substack{ c, l_0 \in \N  \\ cl_0 \leq z^{1/2} \\ \gcd(c,l_0)=1 }}
  \frac{ \mu(c)  \mu(l_0)^2 \gcd(l_0, k-m) } {\phi(cl_0)^2}=
 \sum_{\substack{ c, l_0 \in \N   \\ \gcd(c,l_0)=1}}
  \frac{ \mu(c)  \mu(l_0)^2 \gcd(l_0, k-m) } {\phi(cl_0)^2}
+O\l(
 \sum_{\substack{ c, l_0 \in \N  \\ cl_0 >  z^{1/2} }}
  \frac{  |k-m| } {\phi(cl_0)^2}
\r)
.\] By~\eqref{eq:stndrd} we have 
\[ \sum_{\substack{ c, l_0 \in \N  \\ cl_0 >  z^{1/2} }}
  \frac{ 1  } {\phi(cl_0)^2}=\sum_{s>z^{1/2} } \frac{\tau(s)}{\phi(s)^2} \ll \sum_{s>z^{1/2} }  s^{-3/2} \ll z^{-1/4}
.\]
The main term has Euler product
\[
 \sum_{\substack{ c, l_0 \in \N   \\ \gcd(c,l_0)=1}}
  \frac{ \mu(c)  \mu(l_0)^2 \gcd(l_0, k-m) } {\phi(cl_0)^2}
=
\prod_{p \textrm{ prime}}
\l(1-\frac{1}{(p-1)^2}+\frac{\gcd(p,k-m)}{(p-1)^2}\r) 
. \] Only the primes dividing $k-m $ contribute. In particular, we get the product 
\[ \prod_{{\rm prime} \ p \mid k-m } \l(1 + \frac{1}{ p-1 }\r) =\prod_{{\rm prime} \ p\mid k-m} \frac{p}{p-1}  ,\] which concludes the proof.
 \end{proof}
Using Lemmas~\ref{lem:initialtransform} and \ref{lem:mobsumtn}  with $z =H^{1/8}$ we obtain 
 \begin{lemma}
\label
{lem:mobsumhdfhd8tn}
Fix any 
$\delta>0 $.  
Then for all $H\geq 1 , A>0 $, 
and all pairs of distinct natural numbers
$k, m \leq  (\log H)^{\delta}$ we have
 \[    
\sum_{\substack{   P\in \Z[t]   ,\, \deg(P)=d \\ | P   | \leq H  ,\, P>0   } }
\Lambda_z(P(k))
\Lambda_z(P(m)) 
=
2^d H^{d+1}
\prod_{{\rm prime}\,p\mid k-m } \frac{p}{p-1} +O_A\l(
 H^{d+1}
(\log H)^{-A}
\r),
 \]  where  $z =H^{1/8}$ and 
the  implied constant does not depend on $k$, $m $, 
$z $ and $H$.
\end{lemma}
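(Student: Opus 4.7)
The plan is to chain together the three preceding results—Lemma \ref{lem:initialtransform}, Lemma \ref{lem:mobsumtn}, and Lemma \ref{lem:aurtghdfhdh2}—and verify that the choice $z = H^{1/8}$ satisfies the technical bounds each of them requires.

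First, I would apply Lemma \ref{lem:initialtransform} with $z = H^{1/8}$. The hypothesis $z^2 \leq H$ reads $H^{1/4} \leq H$, which is true for $H \geq 1$. This converts the left-hand side into a main term
\[
2^d H^{d+1} \sum_{\substack{l_0,l_1,l_2 \\ \text{coprime in pairs} \\ l_0 l_1, l_0 l_2 \leq z}} \mu(l_0)^2 \mu(l_1)\mu(l_2) \log(l_0 l_1)\log(l_0 l_2) \frac{\gcd(k-m,l_0)}{l_0^2 l_1 l_2}
\]
plus an error of size $O(H^d z^4) = O(H^{d+1/2})$, which is swallowed by the target error $O_A(H^{d+1}(\log H)^{-A})$ for any $A$.

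Next, I would invoke Lemma \ref{lem:mobsumtn} with $\delta_1 := \delta$ and, say, $\delta_2 := 1/8$. The hypothesis $H^{\delta_2} \leq z \leq H^{1/2}$ is exactly $H^{1/8} \leq H^{1/8} \leq H^{1/2}$, and the condition $k, m \leq (\log H)^{\delta_1}$ matches our assumption. This rewrites the triple sum (up to an error of $O_A((\log H)^{-A})$, which after multiplication by $2^d H^{d+1}$ will be absorbed after renaming $A$) as the nested sum appearing in the statement of Lemma \ref{lem:mobsumtn}. Then Lemma \ref{lem:aurtghdfhdh2}, whose hypotheses are identical, evaluates that nested sum as
\[
\prod_{p\mid k-m} \frac{p}{p-1} + O_A\bigl((\log H)^{-A}\bigr).
\]
Combining, multiplying by the prefactor $2^d H^{d+1}$, and once more renaming $A$ in the error term yields the claimed asymptotic, with all implied constants independent of $k$ and $m$ since each intermediate estimate is uniform in these parameters.

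There is no real obstacle here: the work has already been done in Lemmas \ref{lem:initialtransform}--\ref{lem:aurtghdfhdh2}, and the only thing to check is that $z = H^{1/8}$ lies in the overlap of their validity ranges and that the various error terms stay below $H^{d+1}(\log H)^{-A}$ after one merges them. The mildly delicate point is bookkeeping: at each stage the error estimate is stated for an arbitrary $A > 0$, so one should apply each lemma with a sufficiently enlarged exponent (e.g.\ $A+1$ or $A+O(1)$) so that the cumulative loss in the logarithmic factors—arising from the renaming of $A$ when absorbing polynomial factors in $\log H$ coming from the prefactor $H^{d+1}$ and from crude bounds like $\tau(k-m) \ll (\log H)^{O(1)}$—still leaves the final error term of order $(\log H)^{-A}$ with the originally prescribed $A$.
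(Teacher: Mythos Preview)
Your proposal is correct and follows exactly the paper's own approach: the paper simply states that combining Lemmas \ref{lem:initialtransform}, \ref{lem:mobsumtn}, and \ref{lem:aurtghdfhdh2} with $z=H^{1/8}$ gives the result. Your write-up is in fact more detailed than the paper's one-line justification, and the hypothesis checks ($z^2\le H$, $H^{1/8}\le z\le H^{1/2}$, and the error $H^d z^4=H^{d+1/2}$) are all correct.
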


Combining 
Proposition~\ref{prop:oupsstat}
with
Lemma~\ref{lem:mobsumhdfhd8tn}   
proves Theorem~\ref{thm:mainresulthalb}.
 \subsection{A variant}
We shall also need the following variant 
 of   Theorem~\ref{thm:mainresulthalb}.
\begin{lemma} \label{lem:inidim}  Fix any  $d\geq 1   $ and   $\delta >0 $. Then for all  $    H  \geq 1, A>0 $, all natural numbers 
$ k ,  \Omega $, and all $ R \in (\Z/\Omega )[t]$ such that $   k   \leq   (\log H)^{\delta }$ and $\Omega \leq H  $   we have 
\[     \sum_{\substack{   | P   | \leq H  ,\,  P>0  ,\,  \deg(P)=d \\  P(k)\,    \text {{\rm prime, }}    P\equiv R \md{\Omega  }  } }
\hspace{-0,3cm }  \log  P(k)  = \frac{2^d H^{d+1}  }{  \Omega^{d  }  \phi(\Omega )   }   \mathds 1 ( \gcd( R( k ) , \Omega) = 1 ) 
+O_A\l( \frac{ H^{d+1}  }{(\log H)^A}  \r) ,\] where the implied constant does not depend on $k,   H , R $ and $ \Omega$. 
\end{lemma}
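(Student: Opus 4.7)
The strategy parallels the proof of Theorem~\ref{thm:mainresulthalb}, but is substantially simpler since only one linear form $P(k)$ is involved and no auxiliary form $P(m)$ needs to be treated by the Dirichlet-kernel/Fourier analysis. My plan is in four steps: pass from primes to the von Mangoldt function, truncate via Lemma~\ref{lem:unifimprog}, expand by M\"obius and count polynomials in arithmetic progressions by the Chinese remainder theorem, and finally evaluate the resulting truncated sum using Lemma~\ref{lem:pntconseq}.

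First I would replace $\sum_{P(k)\text{ prime}}\log P(k)$ with $\sum_P \Lambda(P(k))$ at the cost of a prime-power contribution. Since $|P(k)|\le (d+1)H k^d\le H^{1+o(1)}$, the prime powers $p^j=P(k)$ with $j\ge 2$ satisfy $p\le H^{1/2+o(1)}$, and for each such value the number of $P$ with $|P|\le H$ attaining it is $O(H^d)$ (fix $c_1,\ldots,c_d$ and solve for $c_0$). This yields a total contribution of $O(H^{d+1/2}(\log H)^{O(1)})$, which is absorbed in the claimed error. Then applying Lemma~\ref{lem:unifimprog} with $z=H^{1/8}$ reduces the problem to estimating $\sum_{P}\Lambda_z(P(k))$ with $P$ ranging over the same set (height at most $H$, degree $d$, $P>0$, $P\equiv R\bmod{\Omega}$).

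Next, using $\Lambda_z(P(k))=-\sum_{e\le z,\,e\mid P(k)}\mu(e)\log e$ and swapping order, I reduce the estimate to counting, for each $e\le z$, the number $N(e)$ of coefficient vectors $\b c=(c_0,\ldots,c_d)$ with $c_i\in[-H,H]$, $c_d>0$, satisfying $\b c\equiv R\bmod \Omega$ and $e\mid P_{\b c}(k)$. The key combinatorial step is the following CRT computation. Let $g=\gcd(e,\Omega)$. The condition $\b c\equiv R\bmod{\Omega}$ forces $P_{\b c}(k)\equiv R(k)\bmod{g}$, so $e\mid P_{\b c}(k)$ is compatible only when $g\mid R(k)$; writing $\b c=R+g\b c'$ reduces the divisibility to $P_{\b c'}(k)\equiv -R(k)/g\bmod{e/g}$, and letting $c_1',\ldots,c_d'$ vary freely and solving for $c_0'$ gives exactly $(e/g)^d$ residue classes mod $e/g$, i.e.\ $(e/g)^d g$ residue classes modulo $e\Omega$. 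A standard lattice-point count yields
\[
N(e)=\mathds 1(g\mid R(k))\,\frac{2^d g\, H^{d+1}}{e\,\Omega^{d+1}}+O(H^d),
\]
with implied constant depending only on $d$.

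Finally, assuming $\gcd(R(k),\Omega)=1$, only the terms with $g=1$ survive, and after bounding the error-term contribution by $\sum_{e\le z}(\log e) H^d\ll H^{d+1/8}\log H$, I obtain
\[
\sum_{P}\Lambda_z(P(k))=-\frac{2^d H^{d+1}}{\Omega^{d+1}}\sum_{\substack{e\le z\\ \gcd(e,\Omega)=1}}\frac{\mu(e)\log e}{e}+O_A\!\left(\frac{H^{d+1}}{(\log H)^A}\right),
\]
and the first part of Lemma~\ref{lem:pntconseq} evaluates the inner sum as $-\Omega/\phi(\Omega)+O_A((\log z)^{-A})$, producing the main term $2^d H^{d+1}/(\Omega^d\phi(\Omega))$. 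When instead $\gcd(R(k),\Omega)>1$, any $P\equiv R\bmod\Omega$ has $P(k)$ divisible by some prime $p\mid\gcd(R(k),\Omega)\le\Omega\le H$; primality of $P(k)$ then forces $P(k)=p$, and the number of such $P$ is at most $O(H^d)$ per prime, with $O(\log\Omega)$ admissible primes, giving a trivially acceptable bound $O(H^d(\log H)^2)$. The main obstacle is carrying out the CRT count of $N(e)$ uniformly in $\Omega$ (in particular the branch $g=\gcd(e,\Omega)>1$), which is precisely what forces the characteristic function $\mathds 1(\gcd(R(k),\Omega)=1)$ to emerge in the main term.
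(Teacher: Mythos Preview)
Your proof is correct and follows essentially the same route as the paper: dispose of the case $\gcd(R(k),\Omega)>1$ trivially, pass from primes to $\Lambda$ and then to $\Lambda_z$ via Lemma~\ref{lem:unifimprog} with $z$ a small power of $H$, expand and count polynomials in progressions by CRT, and evaluate the resulting $\mu$-sum with Lemma~\ref{lem:pntconseq}; the only differences are cosmetic (the paper takes $z=H^{1/4}$ and, having already assumed $\gcd(R(k),\Omega)=1$, restricts at once to $\gcd(k_1,\Omega)=1$ instead of carrying a general $g=\gcd(e,\Omega)$ through the CRT count). One small slip to fix: in your parametrisation one should write $\b c=R+\Omega\b c'$ rather than $R+g\b c'$, and the intermediate count ``$(e/g)^d g$ residue classes modulo $e\Omega$'' should read $(e/g)^d g^{d+1}$ (equivalently $(e/g)^d$ classes modulo $\mathrm{lcm}(\Omega,e)$); your stated formula $N(e)=\mathds 1(g\mid R(k))\,2^d g H^{d+1}/(e\,\Omega^{d+1})+O(H^d)$ is nonetheless correct.
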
 \begin{proof}  
If $\gcd( R( k ) , \Omega) \neq 1 $, then $P( k ) $ is a prime divisor of $\Omega$. 
Since there are $O(H^d )$ polynomials $P(t)$ of degree $d$ 
with $|P| \leq H$ such that $P(k)$ is  equal to a given integer, we deduce that the sum in the lemma 
is $ \ll  \#\{ \ell \textrm{ prime}: \ell \mid \Omega \} H^d \log H $. The number of prime divisors is
$\ll \log \Omega \leq \log H$, thus the proof is complete when $\gcd( R( k ) , \Omega) \neq 1 $. 

Let us now assume that 
$\gcd( R( k ) , \Omega) = 1 $.
We first transition to the von Mangoldt function by noting that 
\[ 
\sum_{\substack{   | P   | \leq H, \,  P>0  \\ \deg(P)=d \\   P\equiv R \md{\Omega  }  } }
\Lambda (P(k)) -
\sum_{\substack{   | P   | \leq H  ,\,  P>0  , \, \deg(P)=d \\  P(k)    \text { prime} \\ P\equiv R \md{\Omega  }  } }
\log  P(k) 
\ll
\sum_{2\leq \alpha \ll \log H } \sum_{\substack{ \ell \text{ prime } \\  \ell^\alpha  \leq (d+1) H k ^d  }}
(\log \ell ) 
\sum_{\substack{   | P   | \leq H  \\ \deg(P)=d \\ P(k ) = \ell^\alpha } } 1  
.\] 
The last sum over $P$ is $O(H^d)$, thus the error term is $\ll (\log H)^2 H^{d } (Hk^d)^{1/2} $, which is acceptable.
To conclude the proof it therefore suffices to consider $\sum_P \Lambda(P( k ) )$. 
 Define    $z=H^{1/4} $.   
By  Lemma \ref{lem:unifimprog} it is enough to estimate $$ 
\sum_{\substack{   | P   | \leq H, \,  P>0  \\ \deg(P)=d, \,   P\equiv R \md{\Omega  }     } }
\Lambda_z(P(k))  = -\sum_{\substack{   k_1 \leq z  \\ \gcd( k_1, \Omega)=1 }} 
\mu(k_1) (\log k_1) \sum_{\substack{   | P   | \leq H  , \, P>0  \\ k_1 \mid P( k ) ,  \, P\equiv R \md{\Omega  }     } } 1,$$ 
where $\gcd( k_1, \Omega)=1$ follows from $\gcd(R(k ) , \Omega)=1$. 
Hence  the sum over $P$ is 
\[2^d \l( \frac{H^{d+1}  }{k_1^{d+1}  \Omega^{d+1 }  }+O\l(1+\frac{H^d}{k_1^d \Omega^d }\r) \r) 
\#\{P \in (\Z/k_1)[t]: \deg(P)\leq d , P(k)\equiv 0 \md{k_1 }  \} 
.\] Since $\#\{P\}=k_1^ d $ and $k_1\leq z\leq H 
$, the above becomes  \[   \frac{2^d H^{d+1}  }{  \Omega^{d+1 }  }    \frac{1}{k_1 }   +O(H^d)   .\]  The error term contribution is \[\ll H^d \sum_{ \substack{  k_1  \leq z    } } 
\log k_1   \ll H^d z\log z \ll  H^{d+1/2} . \]  
The main term contribution is \[ - \frac{2^d H^{d+1}  }{  \Omega^{d+1 }  }  \sum_{ \substack{ k_1  \leq z   \\    \gcd( k_1, \Omega) =1   } }    
 \frac{\mu( k_1) \log k_1  }{k_1 } =\frac{2^d H^{d+1}  }{  \Omega^{d  }  \phi(\Omega )   }   + O_A \l(           \frac{ H^{d+1}  }{\log^A z}\r)
, \]  where we used~\eqref{eq:easytoprove}.     \end{proof}

\section{Dispersion} \label{e:estimatingthevar}  

Recall that $  \c V(x, H)$ was defined in~\eqref{eq:aha2}.
In this section we prove  $\c V(x, H)\ll x^2/(\log x)^{-1}$  
via Linnik's dispersion method \cite{MR0168543}. Theorem~\ref{thm:almostal}
then follows by the Cauchy--Schwarz inequality $\c R(x, H)^2\leq \c V(x, H)$.  
Removing the condition $P_i\equiv Q_i\md{M}$ can only increase $\#\texttt{Poly}(H)\c V(x, H)$, thus
 \beq
{eq:kakakafe}
{\#\texttt{Poly}(H) \c V(x, H)
  \leq  \sum_{\substack{ 
\b P \in \Z[t]^n,\, |\b P| \leq H
\\
 \deg(P_i)=d_i,\,  P_i>0  
} }
   \hspace{-0,5cm}
\theta_{\b P}(x)  ^2
 -2 x
\hspace{-0,5cm}
 \sum_{\substack{ 
\b P \in \Z[t]^n, \,|\b P| \leq H
\\
 \deg(P_i)=d_i, \, P_i>0  
} }
\hspace{-0,5cm}
\mathfrak S_{\b P}(x)
  \theta_{\b P}(x)  
+
x^2 
\hspace{-0,5cm}
\sum_{\substack{ 
\b P \in \Z[t]^n,\, |\b P| \leq H
\\
 \deg(P_i)=d_i,\,  P_i>0  
} }
\hspace{-0,5cm}
\mathfrak S_{\b P}(x) 
^2 
.}  
The term 
$\sum_{\b P} \theta_{\b P}(x)^2$
is studied in
\S \ref{s:introlambd} using 
Theorem~\ref{thm:mainresulthalb}.
 The terms
$\sum_{\b P} \mathfrak S_{\b P}(x)^2$ 
and $\sum_{\b P}
\mathfrak S_{\b P}(x)
 \theta_{\b P}(x)$ 
  are estimated 
  in 
\S\ref{ss:momo}
 and
\S\ref{sub:corecor}, respectively. 

Throughout this section $d=d_1+\ldots+d_n$. We write $P_i(t)=\sum_{j=0}^{d_i} c_{ij}t^j$
for each $i=1,\ldots,n$.

\subsection{The term $\sum_{\b P} \theta_{\b P}(x)^2$}
\label
{s:introlambd} 
  Recall that 
$\c G_{k,m}(H; d_i)$ is defined in (\ref{def:bachist}).
\begin{lemma}
\label
{lem:brunti}
 Fix any $\delta >0$. For all $x,H$ with  $1\leq x \leq (\log H)^{\delta }$  
we have 
\[
 \sum_{\substack{
 \b P \in \Z[t]^n,\, |\b P| \leq H \\  \deg(P_i)=d_i,\,  P_i>0  
} }
   \theta_{\b P}(x)^2
=
2 
\sum_{
\substack{
1\leq  m<  k  \leq x 
\\
k\equiv m \equiv n_0 \md{M} 
}}
\prod_{i=1}^n
 \c G_{k,m}(H 
; d_i)
+O\l(
x  {H}^{d+n} (\log H)^n
 \r)
,\]
where the implied constant depends only on $\delta$ and $d_i$.
\end{lemma}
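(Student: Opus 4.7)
My plan is to pass from the indicator of primality to the von Mangoldt function, square, and then treat separately the off-diagonal and diagonal contributions of the resulting double sum.

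Since $\Lambda\geq 0$ and $\Lambda(N)=\log N$ when $N$ is prime, the definition of $\theta_{\b P}(x)$ gives the pointwise majorisation
\[
\theta_{\b P}(x)\leq \sum_{\substack{m\in\N\cap[1,x] \\ m\equiv n_0\,(\!\bmod M)}} \prod_{i=1}^n \Lambda(P_i(m)).
\]
Squaring (all terms are nonnegative) and swapping summation order yields
\[
\sum_{\b P}\theta_{\b P}(x)^2 \leq \sum_{\substack{1\leq k,m\leq x \\ k\equiv m\equiv n_0\,(\!\bmod M)}} \sum_{\b P}\prod_{i=1}^n \Lambda(P_i(k))\Lambda(P_i(m)),
\]
where $\b P$ ranges over $n$-tuples with $|P_i|\leq H$, $\deg P_i=d_i$ and $P_i>0$. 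In the off-diagonal regime $k\neq m$, the inner sum factorises across the coordinates, producing $\prod_{i=1}^n \c G_{k,m}(H;d_i)$. The symmetry $\c G_{k,m}=\c G_{m,k}$ collapses the pairs $k\neq m$ to $2\sum_{1\leq m<k\leq x}\prod_{i=1}^n \c G_{k,m}(H;d_i)$, which is exactly the main term in the lemma.

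It remains to absorb the diagonal $k=m$ into the error term $O(xH^{d+n}(\log H)^n)$. For this I use the elementary bound $\Lambda(N)^2\leq \Lambda(N)\log N$, and observe that for $m\leq(\log H)^{\delta}$ and $|P_i|\leq H$ one has $|P_i(m)|\leq (d_i+1)H m^{d_i}\ll H(\log H)^{O(1)}$, whence $\log P_i(m)\ll \log H$. Therefore
\[
\sum_{\b P}\prod_{i=1}^n \Lambda(P_i(m))^2 \ll (\log H)^n \prod_{i=1}^n \sum_{P_i}\Lambda(P_i(m)).
\]
To bound $\sum_{P_i}\Lambda(P_i(m))$ uniformly in $m$, I fix all coefficients of $P_i=\sum_{j=0}^{d_i}c_{ij}t^j$ except the constant term $c_{i0}$; as $c_{i0}$ runs over $[-H,H]\cap\Z$, the value $P_i(m)$ ranges over an arithmetic progression of length $2H+1$ with step $1$, and Chebyshev's estimate for $\sum\Lambda$ on an interval gives $\sum_{c_{i0}}\Lambda(P_i(m))\ll H$. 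Multiplying by the $O(H^{d_i})$ choices of the remaining coefficients yields $\sum_{P_i}\Lambda(P_i(m))\ll H^{d_i+1}$, a bound independent of $m$. Summing over $m\leq x$ produces $\ll x(\log H)^n H^{d+n}$, matching the required error.

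The only real obstacle is the diagonal estimate: a naive count using $|P_i(m)|\leq Hm^{d_i}$ would cost an extra factor of $m^{d_i}$ per coordinate and ruin the error term. The resolution, as above, is to apply Chebyshev in the constant-coefficient variable, where the interval has length $2H$ independently of $m$; this is what keeps the diagonal contribution linear in $x$.
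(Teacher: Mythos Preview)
Your argument follows exactly the paper's strategy: pass to the von Mangoldt function, square, and split into off-diagonal (the main term) and diagonal (the error term). The off-diagonal treatment is identical.

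For the diagonal, the paper simply invokes its Lemma~\ref{lem:inidim} with $\Omega=1$ to conclude $\sum_{P_i}\Lambda(P_i(m))\ll H^{d_i+1}$ uniformly in $m$. Your more elementary approach---freezing all coefficients except the constant term---is a nice shortcut, but the justification needs one correction. When $c_{i0}$ ranges over $[-H,H]$, the value $P_i(m)$ ranges over the interval $[B-H,B+H]$ with $B=\sum_{j\ge 1}c_{ij}m^j$, and $|B|$ can be as large as $d_iH(\log H)^{d_i\delta}$. Chebyshev's estimate $\psi(X)\ll X$ then only yields
\[
\sum_{N\in[B-H,B+H]}\Lambda(N)\leq\psi(B+H)\ll H(\log H)^{d_i\delta},
\]
which would inflate the diagonal contribution to $x\,H^{d+n}(\log H)^{n+d\delta}$, too weak for the stated lemma. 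What you actually need is Brun--Titchmarsh for short intervals, $\pi(X+Y)-\pi(X)\ll Y/\log Y$; combined with $\log(B+H)\ll\log H\asymp\log(2H)$ this does give $\sum_{N\in[B-H,B+H]}\Lambda(N)\ll H$ as claimed. With that one substitution your proof is complete, and it has the merit of bypassing the truncated-$\Lambda$ machinery of \S\ref{s:mob} on which Lemma~\ref{lem:inidim} rests.
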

\begin{proof}
First, note that 
for all $j\in \N$
we have 
$\mathds 1_{\text{primes}}(j)\log j 
\leq 
\Lambda(j)$, 
where $\Lambda$ is the von Mangoldt function. 
Therefore, the sum over the $P_i$ in our lemma is at most
\[ 
 \sum_{\substack{ P_1, \ldots, P_n
\\
|P_i| \leq H,\,  P_i>0  
} }
\l(
\sum_{\substack{  m \leq x  \\ m\equiv n_0 \md{M}   } } 
 \Lambda( P_1(m) )\ldots \Lambda(P_n(m))
  \r)^2
=
\sum_{\substack{
1\leq k , \, m \leq x 
\\
k\equiv m \equiv n_0 \md{M} 
 }} 
\prod_{i=1}^n
 \c G_{k,m}(H; d_i)
.\]
The contribution of the diagonal terms 
$k=m$ is at most
   \[ 
\sum_{1\leq  m \leq x }
\prod_{i=1}^n 
\sum_{\substack{   | P_i   | \leq H     ,\, P_i>0 \\ \deg(P_i)=d_i    } }
\Lambda(P_i(m ) )^2
.\]    Using 
$0\leq \Lambda(h) \leq \log h$ 
gives the bound 
 \[
 \ll 
(\log H  )^n
\sum_{1\leq  m
\leq x }
\prod_{i=1}^n 
\sum_{\substack{   | P_i   | \leq H,\, P_i>0   \\ \deg(P_i)=d_i    } }
\Lambda (P_i(m ) )
.\] We can now apply Lemma \ref {lem:inidim} with $\Omega=1$ and $d=d_i$. It shows that the sum over the $P_i$ is $O(H^{1+d_i } )$, hence
\[ (\log H  )^n \sum_{1\leq  m \leq x } \prod_{i=1}^n  \sum_{\substack{   | P_i   | \leq H ,\, P_i>0   \\ \deg(P_i)=d_i    } }
 \Lambda (P_i(m ) )\ll  
(\log H  )^n     x H^{d+n }  
,\] which is sufficient for the proof.
\end{proof}
\begin{remark} \label{rem:lowerbound} Lemma \ref {lem:brunti} shows  why we need to have  $x/(\log H)^n \to +\infty $: 
if $x$ is not this large compared to the typical size of the coefficients of the polynomials, then
 the diagonal terms  in the second moment dominate; using Lemmas \ref{lem:bachist23457}, \ref{lem:secomom},
\ref{lem:secomomter5226anna} it is then easy to see that
the three principal terms do not cancel. In particular, one has $$\c V(x,H) \asymp x (\log H)^n \gg x^2,$$ which is not sufficient for proving Theorem \ref{cool}.
\end{remark} 
Our next step is to use Theorem~\ref{thm:mainresulthalb} to estimate the sum over $m, k $ in Lemma~\ref{lem:brunti}.
This will give rise to an average of the multiplicative function $$  \prod_{ \text{\rm prime}\, p\mid t  }\left(1+\frac{1}{p-1}\right)^n  .$$
For this we need the following lemma.
\begin{lemma} \label{lem:wintner}
Fix any $n\in \N$ and $c>0$. Let $f$ be a  function  defined on the primes such that
$|f(p)|\leq c/p $ for all $p$. Then for all 
$x,T\geq 1 $ we have
\[ \sum_{\substack{ t\in \N \\ t\leq x  }}  \prod_{ \text{\rm prime}\, p\mid t  } (1+f(p))^n =
O(x)
\]and \[
\int_0^T \sum_{\substack{ t\in \N \\ t\leq x  }}  \prod_{ \text{\rm prime}\, p\mid t  } (1+f(p))^n   \mathrm d x 
=   \frac{T^2}{2}\prod_{  \text{\rm prime} \, p  }  \l( 1 +\frac{(1+f(p))^n-1}{p} \r)  +O(T^{3/2}),\]
where the implied constants depend  only on $n $ and $c$.
\end{lemma}
\begin{proof}
Wintner's theorem (as generalised by Iwaniec--Kowalski~\cite[Eq.~(1.72)]{iwa})
states that for any arithmetic function
  $g$ and any monotonic and bounded $h:[0,\infty)\to \R$, one has 
\beq{eq:wint}{
\sum_{t\leq x } (g\ast h)(t) =\int_0^ x\l(\sum_{t\leq y }\frac{g(t)}{t}h\l(\frac{y}{t}\r)\r)\mathrm d y +O\l(\sum_{t\leq x } |g(t)| \r)}
for all $x\geq 1 $. Here  $g\ast h $ is the Dirichlet convolution. Letting  $h=1$ and 
$$ g(t)=|\mu(t)| \prod_{ \text{\rm prime}\, p\mid t  } \l((1+f(p))^n-1\r) $$
gives $(g\ast h)(t) =\prod_{p\mid t }(1+f(p))^n $, hence, by~\eqref{eq:wint}, we obtain
\beq{eq:bkc}{
\sum_{\substack{ t\in \N \\ t\leq x  }}\prod_{ \text{\rm prime}\, p\mid t  } (1+f(p))^n =
\int_0^ x \sum_{t\leq y }\frac{g(t)}{t}  \mathrm d y +O\l(\sum_{t\leq x } | g(t)|  \r)
.}
For a prime $p$ we have 
\[ | g (p)  |=\l|\sum_{j=1}^n {n\choose j} f(p)^j\r| \leq \sum_{j=1}^n {n\choose j}\frac{c^j}{p^j} \leq \frac{2^\alpha}{p}\]
for some positive constant $\alpha$ that depends only on $n$ and $c$.
Therefore, by~\eqref{eq:stndrd}
we obtain 
\[
t|g(t)| \leq   |\mu(t)| \tau(t)^\alpha =O(t^{1/2})
.\]
This implies that for all $x,y \geq 1 $ one has 
\[
\sum_{t\leq x } | g(t)|
\ll \sum_{t\leq x } t^{-1/2} \ll x^{1/2}
\ \ \textrm{  and  } \ \  \sum_{t>y} \frac{|g(t)|}{t} \ll \sum_{t>y} t^{-3/2} \ll y^{-1/2}.\]
Therefore,  
\[
\sum_{t\leq y }\frac{g(t)}{t}
=
\sum_{t\in \N  }\frac{g(t)}{t}
+O(y^{-1/2})
=\prod_p \l(1+\frac{g(p) }{p}\r)
+O(y^{-1/2}).\] 
Using    $1+g(p) =(1+f(p))^n $ in the product and alluding to~\eqref{eq:bkc}, we obtain 
\[
\sum_{\substack{ t\in \N \\ t\leq x  }}\prod_{ \text{\rm prime}\, p\mid t  } (1+f(p))^n
 =x\prod_{{\rm prime}\, p} \l( 1 +\frac{(1+f(p))^n-1}{p} \r)+O(x^{1/2}).\]
Clearly this is $O(x)$, which proves the first claim in the lemma.
The second claim follows by integrating over the range $0\leq x \leq T$.
\end{proof}
Recall that $\gamma_n(\ell  )$ was defined in~\eqref{eq:bratsakipsomaki}.
\begin{lemma}
\label
{lem:bachist23457}
Fix any $\delta >0$. For all $x,H$ with  $1\leq x \leq (\log H)^{\delta }$
we have 
\[
 \sum_{\substack{ \b P\in \Z[t]^n,\, |\b P|\leq H
\\
\deg(P_i) =d_i,\,  P_i>0  
} }
 \theta_{\b P}(x)^2
=
\frac{x^2 M^{n-2 }  }{  \phi(M)^n }
 2^d
H^{d+n}
 \prod_{\text{\rm prime} \, \ell \nmid M }
\gamma_n (\ell  ) 
+O\l(
x
  H^{d+n} 
(\log H)^n
+x^{3/2}
H^{d+n}
 \r)
,\] 
where the implied constant 
 depends only on $\delta, n, M$ and $d_i$.
 \end{lemma}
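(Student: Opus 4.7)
The plan is to combine Lemma \ref{lem:brunti} with Theorem \ref{thm:mainresulthalb}, and then evaluate the resulting mean value of a multiplicative function via Dirichlet convolution.

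By Lemma \ref{lem:brunti}, up to the acceptable error $O(xH^{d+n}(\log H)^n)$, it suffices to bound $2\sum_{1\leq m<k\leq x,\ k\equiv m\equiv n_0\bmod M}\prod_{i=1}^n \c G_{k,m}(H;d_i)$. I would apply Theorem \ref{thm:mainresulthalb} to each factor $\c G_{k,m}(H;d_i)$ with the parameter $A$ there chosen so large (in terms of $n$ and $\delta$) that the total error, after multiplying $n$ factors and summing over $O((\log H)^{2\delta})$ pairs $(k,m)$, is absorbed into the allowed error. Using Mertens' theorem to bound the main term $\prod_{p\mid k-m}(p/(p-1))^n$ by $O((\log\log H)^n)$ keeps the cross terms controlled. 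This reduces matters to evaluating
\[
M_1 := 2^{d+1} H^{d+n} \sum_{\substack{1\leq m<k\leq x\\ k\equiv m\equiv n_0\bmod M}}\prod_{p\mid k-m}\left(\frac{p}{p-1}\right)^n.
\]

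Substituting $h=k-m$, the inner sum becomes $\sum_{M\mid h<x}\bigl((x-h)/M+O(1)\bigr)f(h)$, where $f(h) := \prod_{p\mid h}(p/(p-1))^n$. I would compute $\sum_{M\mid h\leq y}f(h)$ via the Dirichlet convolution $f=1\ast g$, where $g$ is multiplicative, supported on squarefree integers, with $g(p)=(p/(p-1))^n-1 = O_n(1/p)$. Then
\[
\sum_{M\mid h\leq y}f(h)=\sum_d g(d)\lfloor y/\mathrm{lcm}(d,M)\rfloor =\frac{Cy}{M}+O_n(y^{\epsilon}),
\]
where $C=\prod_p(1+g(p)\gcd(p,M)/p)$. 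A direct check shows the local factor equals $(p/(p-1))^n$ for $p\mid M$ and equals $\gamma_n(p)$ for $p\nmid M$, so $C=(M/\phi(M))^n\prod_{\ell\nmid M}\gamma_n(\ell)$. The tail estimate uses $|g(d)|/d=O(\prod_{p\mid d}C_n/p^2)$ on squarefree $d$, giving rapid decay; the $O(1)$ discrepancy from the floor function contributes only $O(xH^{d+n}\log\log H)$, which fits into the target.

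Partial summation then yields $\sum_{M\mid h<x}(x-h)f(h)/M = Cx^2/(2M^2)+O(x^{1+\epsilon}/M)$, and combining with $2^{d+1}H^{d+n}$ together with the identity $(M/\phi(M))^n=\prod_{p\mid M}(p/(p-1))^n$ gives
\[
M_1=\frac{x^2 M^{n-2}}{\phi(M)^n}\, 2^d H^{d+n}\prod_{\ell\nmid M}\gamma_n(\ell) + O\bigl(x^{1+\epsilon}H^{d+n}\bigr),
\]
with the latter error fitting inside $O(x(\log x)^n H^{d+n}(\log H)^n)$ provided $\epsilon$ is chosen small enough relative to $n/\delta$, since $x\leq(\log H)^\delta$. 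The main obstacle is technical: verifying the Euler-product identity that produces exactly the factor $\gamma_n(p)$, and bounding the tail $\sum_{d>y}|g(d)|/d$ uniformly in $M$. Both are elementary once one exploits that $g$ is squarefree-supported with $g(p)=O_n(1/p)$; assembling the pieces then gives the lemma.
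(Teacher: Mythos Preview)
Your proposal is correct and follows essentially the same route as the paper: both reduce via Lemma~\ref{lem:brunti} and Theorem~\ref{thm:mainresulthalb} to evaluating $T_0(x)=\sum_{k\equiv m\equiv n_0\,(M)}\prod_{p\mid k-m}(p/(p-1))^n$, and both unwind this via the convolution identity $\prod_{p\mid N}(p/(p-1))^n=\sum_{d\mid N}g(d)$ with $g(p)=(p/(p-1))^n-1$ (the paper writes this as $f_n(h)/h$) followed by an Euler-product computation that produces $\gamma_n(\ell)$ for $\ell\nmid M$ and $(p/(p-1))^n$ for $p\mid M$. The only cosmetic difference is that the paper sums over divisors of $k-m$ directly while you first group by the value $h=k-m$ and then apply partial summation; note also that your bound $|g(d)|/d=O(\prod_{p\mid d}C_n/p^2)$ is not literally true at small primes, but the tail estimate you need follows anyway since $g(p)/p\sim n/p^2$ makes $\sum_d|g(d)|/d$ convergent.
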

\begin{proof} 
 Taking sufficiently large $A$ in 
 Theorem
\ref{thm:mainresulthalb} and 
 using 
Lemma~\ref
{lem:brunti} 
yields
 \[ 
 \sum_{\substack{ P_1, \ldots, P_n
\\
|P_i| \leq H,\,  P_i>0  
} }
   \theta_{\b P}(x)^2
=
 2^{d+1}
H^{d+n}
 T_0(x)
+O_A\l(
x  {H}^{d+n} (\log H)^n
+
 H^{d+n} (\log H)^{-A}
\r) 
,\] 
where  
\[
T_0(x) 
:= 
\sum_{\substack{ 1\leq  m <  k  \leq x \\
k
\equiv m \equiv n_0
 \md{M}
}} 
\prod_{{\rm prime}\, p\mid k-m } \frac{p^n}{(p-1)^n} .
\] 
We have $k-m= tM$ for some integer $t$. Hence,   $T_0(x)$ equals 
\beq{eq:refvgn}{
\sum_{\substack{ t\in \N \\ 1<t M \leq x } } \l(\prod_{p\mid t M } \frac{p^n}{(p-1)^n}\r)
\sum_{\substack{ m \in \N \\  m <  x-tM \\ m\equiv n_0 \md M  } }  1 
=
\sum_{\substack{ t\in \N \\ 1<t M \leq x } } 
\l(\prod_{p\mid t M } \frac{p^n}{(p-1)^n}\r)
\l(\frac{x}{M}-t +O(1)\r)
.} Define a function $f$ on the primes such that 
$ f(p)= 1/(p-1) $ if $p\nmid M$, and 
$f(p)=0$ if $p\mid M$. Then 
\[ \prod_{{\rm prime}\, p\mid t M } \frac{p^n}{(p-1)^n} = \frac{M^n}{\phi(M)^n} \prod_{{\rm prime}\, p\mid t   } (1+f(p))^n ,\] hence the right hand side of~\eqref{eq:refvgn} is 
\[
 \frac{M^n}{\phi(M)^n}
\sum_{ t \leq x/M   } 
\l( \prod_{{\rm prime}\, p\mid t   } (1+f(p))^n \r)
\l(\frac{x}{M}-t \r)
+O (x )
,\] where we used the first part of Lemma~\ref{lem:wintner}
to bound the contribution of the $O(1)$ term.
Using $\int_{t}^{x/M} 1\mathrm d y=x/M-t$ 
 we can write the sum over $ t $ as 
\[
\int_{0}^{x/M} 
\sum_{ t \leq y   }  \prod_{p\mid t   } (1+f(p))^n 
 \mathrm d y.
\] Invoking the second part of  Lemma~\ref{lem:wintner}  shows that 
 this is
\[
  \frac{x^2}{2M^2}\prod_{ {\rm prime}\, p\nmid M      }  \gamma_n(p)  +O(x^{3/2})
,\] which concludes the proof.
  \end{proof}
It is   convenient to truncate the product over $\ell$ in Lemma~\ref{lem:bachist23457} now, 
as it will make it easier to compare   
  $\sum_\b P \theta_\b P(x)^2$ to  $\sum_\b P \theta_\b P(x) \mathfrak S_\b P(x) $ and $\sum_\b P \mathfrak S_\b P(x)^2$.
\begin{lemma}
\label
{lem:10februarylemma} Fix $n \in \N$. Then for all $x\geq 1 $ we have 
\[
\prod_{\text{\rm prime} \,
\ell > \log x  }
\gamma_n(\ell)
=1+O\l(\frac{1}{\log x}\r)
.\]
\end{lemma}
\begin{proof}
The bound 
$(1+\psi)^n \leq 1 + n\psi  +n 2^n \psi^2 $, valid for all $0<\psi< 1$,
can be used for $\psi=1/(\ell -1)$
to show that  
\[
\gamma_n(\ell ) 
 =
 1-\frac{1}{\ell  }  +
\frac{1}{\ell  } 
\l(1+\frac{1}{\ell-1 }\r)^n 
\leq 
  1-\frac{1}{\ell  }  +
\frac{1}{\ell  } 
\l(
1 +\frac{n}{\ell-1}  +\frac{ n 2^n}{(\ell-1)^2 } 
\r)
\leq 1 +  \frac{n 2^{n+1}}{\ell(\ell-1)}  .\]
In particular, $\log \gamma_n(\ell) \leq  
  \frac{n 2^{n+1}}{\ell(\ell-1)} 
$.
We obtain  
\[
\log \l(
\prod_{\substack{\text{\rm prime} \ \ell \\ 
\ell > \log x  } } 
\gamma_n(\ell)
\r)
\leq  \sum_{\substack{\text{\rm prime} \ \ell \\ 
\ell > \log x  } }     \frac{n 2^{n+1}}{\ell(\ell-1)}  
\leq  n 2^{n+1} 
\sum_{\substack{k \in \N \\
k > \log x}}
\frac{1 }{k(k-1)}
\leq  \frac{n 2^{n+1} }{-1+\log x}
 .\] 
Exponentiating gives 
\[
\prod_{\substack{\text{\rm prime} \ \ell \\ 
\ell > \log x  }  } 
\gamma_n(\ell)
\leq
\exp\l(\frac{n 2^{n+2}}{-1+
\log x}
 \r)=1+O \l(\frac{1}{\log x }\r). \qedhere \]  
\end{proof} Combining 
Lemma~\ref{lem:10februarylemma}
with 
 Lemma~\ref{lem:bachist23457}
gives
\beq{eq:finsecmom}{
 \sum_{\substack{ \b P\in (\Z[t])^n, |\b P|\leq H
\\
\deg(P_i) =d_i,  P_i>0  
} }
  \hspace{-0,2cm}
\theta_{\b P}(x)^2
=
\frac{x^2 M^{n-2 }  }{  \phi(M)^n }
 2^d
H^{d+n}
 \prod_{\substack{
\ell   \nmid M 
\\
\ell \leq \log x
}} 
\gamma_n (\ell  ) 
+O\l(
\frac{x^2 H^{d+n}}{\log x }+
x
  H^{d+n} 
(\log H)^n
 \r)
.}

\subsection{The term $\sum_{\b P} \mathfrak S_{\b P}(x)^2$}
\label
{ss:momo} 
Let  $$W=\prod_{\substack{\text{\rm prime} \ \ell \\ 
\ell\nmid M, \ \ell \leq  \log x  } } \ell .$$ The prime number theorem implies  that  
\[\log W \leq\sum_{\substack{\text{\rm prime} \,
  \ell \leq  \log x  } }  \log \ell \leq 2  \log x, \]   whence we obtain
\beq {eq:tousebound4} {W \leq x^2 .}
  
\begin{lemma}
\label
{lem:secomom3}
For every square-free
$m \in \N$ 
we have
$$\sum_{
\substack{
R_1,\ldots, R_n \in (\Z/m)[t]
\\
 \deg(R_i) \leq d_i
}}
\hspace{0,3cm}
  \prod_{\text{\rm prime } \ell \mid m }
\l(
\frac
{1-\ell^{-1}Z_{R_1 \ldots R_n}(\ell)}
{(1-\ell^{-1})^n }
\r)^2 = m^{n+d }
\prod_{\text{\rm \ prime} \,    \ell \mid m }
\gamma_n(\ell).$$
\end{lemma}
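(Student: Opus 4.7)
The plan is to reduce this identity to the single-prime computation already carried out in Lemma~\ref{lem:corolhaha}, by exploiting the assumption that $m$ is square-free.

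First, since $m$ is square-free, the Chinese Remainder Theorem yields a ring isomorphism
\[
(\Z/m)[t] \;\cong\; \prod_{\ell \mid m} \F_\ell[t],
\]
and under this isomorphism a polynomial $R \in (\Z/m)[t]$ of degree at most $d_i$ corresponds to a tuple of reductions $R \bmod \ell \in \F_\ell[t]$ each of degree at most $d_i$. Moreover, the quantity $Z_{R_1\ldots R_n}(\ell)$ depends only on $(R_1,\ldots,R_n) \bmod \ell$. Therefore the sum over $(R_1,\ldots,R_n) \in (\Z/m)[t]^n$ factors as a product over primes $\ell\mid m$ of the corresponding sums over $\F_\ell[t]^n$:
\[
\sum_{\substack{R_i \in (\Z/m)[t]\\ \deg R_i\leq d_i}}
\prod_{\ell \mid m}\!\!\left(\frac{1-\ell^{-1}Z_{R_1\ldots R_n}(\ell)}{(1-\ell^{-1})^n}\right)^{\!2}
= \prod_{\ell\mid m}\,
\sum_{\substack{P_i \in \F_\ell[t]\\ \deg P_i\leq d_i}}
\!\!\left(\frac{1-\ell^{-1}Z_{P_1\ldots P_n}(\ell)}{(1-\ell^{-1})^n}\right)^{\!2}.
\]

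Next, I would apply Lemma~\ref{lem:corolhaha} to each factor. That lemma gives, for each prime $\ell$,
\[
\sum_{\substack{P_i \in \F_\ell[t]\\ \deg P_i \leq d_i}} \Bigl(1-\ell^{-1}Z_{P_1\ldots P_n}(\ell)\Bigr)^2
= \ell^{d+n}\,(1-\ell^{-1})^{2n}\!\left(1-\ell^{-1} + \frac{1}{\ell(1-\ell^{-1})^n}\right).
\]
Dividing by $(1-\ell^{-1})^{2n}$ removes the denominator in each factor above, leaving
\[
\ell^{d+n}\!\left(1-\ell^{-1} + \frac{1}{\ell(1-\ell^{-1})^n}\right).
\]
Writing $1/(1-\ell^{-1})^n = \ell^n/(\ell-1)^n$, the bracketed expression is exactly
\[
1-\ell^{-1} + \frac{\ell^{n-1}}{(\ell-1)^n} = \gamma_n(\ell),
\]
by definition \eqref{eq:bratsakipsomaki}.

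Taking the product over $\ell\mid m$ and using that $m$ is square-free to collect the factors $\ell^{d+n}$ into $m^{d+n}$ yields the claimed identity. There is no genuine obstacle here: the entire argument is an orchestration of CRT together with the single-prime second-moment computation of Lemma~\ref{lem:corolhaha}, and the only verification step is the algebraic identification of the bracketed term with $\gamma_n(\ell)$.
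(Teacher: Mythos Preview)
Your proposal is correct and follows exactly the paper's approach: the paper's proof simply states that the Chinese remainder theorem makes the left-hand side multiplicative in $m$, and then invokes Lemma~\ref{lem:corolhaha} for the prime case. Your write-up is just a more detailed execution of the same two steps, including the explicit identification of the bracketed factor with $\gamma_n(\ell)$.
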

\begin{proof}
A standard argument based on 
the Chinese remainder theorem shows that 
 the left hand side 
is a multiplicative function of $m$. Invoking 
  Lemma~\ref
{lem:corolhaha}
concludes the 
proof. 
\end{proof}
 \begin{lemma}
\label
{lem:secomom}
For  $ 1\leq  x \leq H^{1/4}$ we have  
\[
 \sum_{\substack{ 
\b P \in \Z[t]^n,\, |\b P| \leq H
\\
\deg(P_i)=d_i,\,
P_i>0  
} }
 \mathfrak{S}_{\b P}(x)
^2
= 
\frac{2^d H^{d+n} M^{ n-2}}{\phi(M)^{ n}  }
 \prod_{\substack{  {\rm prime}\,
\ell \nmid M
\\
\ell \leq   \log x 
}} 
\gamma_n(\ell) 
+O(H^{d+n-1/2})  
,\] 
where the implied constant depends only on $n , M $ and $d_1,\ldots,d_n$.
\end{lemma}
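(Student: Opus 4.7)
The plan is to exploit that $\mathfrak{S}_{\b P}(x)^2$ depends on $\b P$ only through its residue modulo $MW$, and then to apply the Chinese remainder theorem together with Lemma~\ref{lem:secomom3}. First I would split the outer sum by $\b R \equiv \b P \md{MW}$:
\[
\sum_{\b P} \mathfrak{S}_{\b P}(x)^2 \ = \ \sum_{\b R \md{MW}} \mathfrak{S}_{\b R}(x)^2 \cdot N(\b R),
\]
where $N(\b R)$ counts the admissible $\b P$ with $\b P \equiv \b R \md{MW}$. A routine lattice point count yields $N(\b R) = 2^d(H/MW)^{d+n} + O((H/MW)^{d+n-1})$ uniformly in $\b R$; this is valid because the hypothesis $x \leq H^{1/4}$ together with \eqref{eq:tousebound4} forces $MW \leq H^{1/2}$, so in particular $H/MW \geq 1$.

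Next, since $\gcd(M,W)=1$, the inner sum $\sum_{\b R \md{MW}} \mathfrak{S}_{\b R}(x)^2$ factors under CRT into a ``mod $M$'' part and a ``mod $W$'' part. The mod-$W$ part is exactly Lemma~\ref{lem:secomom3} applied with $m=W$ (squarefree by construction), contributing $W^{d+n}\prod_{\ell \mid W}\gamma_n(\ell)$. The mod-$M$ part comes from the indicator $\mathds{1}(\gcd(M,\prod_i R_{M,i}(n_0))=1)$; it equals the number of reductions $\b R_M \md M$ with every $R_{M,i}(n_0) \in (\Z/M)^*$, which by a short argument (using that $R_{M,i} \mapsto R_{M,i}(n_0)$ is surjective $\Z/M$-linear with equal fibres) evaluates to $\phi(M)^n M^d$. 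Inserting these into the factorised expression and cancelling the various powers of $M$ and $W$ against the prefactor $M^{2(n-1)}/\phi(M)^{2n}$ present in $\mathfrak{S}_{\b P}(x)^2$ reproduces precisely the stated main term $\frac{2^d H^{d+n} M^{n-2}}{\phi(M)^n}\prod_{\ell \mid W}\gamma_n(\ell)$.

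The final task is to verify the error term. Since $\mathfrak{S}_{\b R}(x)^2 \geq 0$, multiplying against the secondary term $O((H/MW)^{d+n-1})$ in $N(\b R)$ and summing gives a total bounded by $O((H/MW)^{d+n-1}) \cdot \sum_{\b R} \mathfrak{S}_{\b R}(x)^2$. The computation of the main term shows that $\sum_{\b R} \mathfrak{S}_{\b R}(x)^2 \ll W^{d+n}$, because the convergent expansion $\gamma_n(\ell) = 1 + O(\ell^{-2})$ keeps $\prod_{\ell \mid W}\gamma_n(\ell)$ bounded. Hence the error is $O(W\cdot H^{d+n-1}) = O(H^{d+n-1/2})$, using once more $W \leq H^{1/2}$. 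I do not anticipate any serious obstacle; the proof is really a bookkeeping exercise against Lemma~\ref{lem:secomom3}, and the role of the hypothesis $x \leq H^{1/4}$ is entirely to guarantee $MW \leq H^{1/2}$, which is exactly what absorbs the lattice-count error into the stated remainder.
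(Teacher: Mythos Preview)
Your proposal is correct and follows essentially the same approach as the paper: split by residue classes modulo $W$ (the paper) or $MW$ (you), count lattice points, and invoke Lemma~\ref{lem:secomom3} for the $W$-part. The only cosmetic difference is that the paper handles the coprimality condition $\gcd(M,\prod_i P_i(n_0))=1$ by M\"obius inversion over divisors $k_i\mid M$, whereas you count residue classes modulo $M$ directly to obtain the factor $\phi(M)^n M^d$; both routes yield the same $(\phi(M)/M)^n$ and the same error $O(H^{d+n-1/2})$ via $W\leq x^2\leq H^{1/2}$.
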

\begin{proof}  
By~\eqref{eq:bachbibaldi}
our sum can be rewritten as
\beq{eq:oursum}{\frac{M^{2n-2}} {\phi(M)^{2n}  }
 \sum_{\substack{ \b P \in (\Z[t])^n,\, |\b P| \leq H\\\deg(P_i)=d_i,\,P_i>0  \\ \gcd(M, \prod_{i=1}^n P_i(n_0) )  =1} }
B_{\b P}(x)^2,  \  \text{ where } \  B_{\b P}(x):=
\prod_{\substack{ \text{ prime} \, 
\ell \nmid M\\\ell \leq   \log x }} 
\frac{1-\ell^{-1}Z_{P_1 \ldots P_n } (\ell )}
{\l(1-\ell^{-1}\r)^n }.}If the coefficients of $P$ and $R$ in
$\Z[t]$ are congruent modulo $\ell$, then   $Z_P(\ell )=
Z_{R}(\ell )$.
Hence, denoting the reduction of $P_i(t)$ in $(\Z/W)[t]$ by $R_i (t)$,
the sum over the $P_i$ in~\eqref{eq:oursum} becomes
\[   \sum_{ \substack{ R_1,\ldots, R_n \in (\Z/W)[t] \\ \deg(R_i)\leq d_i  }} B_{\b R}(x)^2\,
 \# \left\{ P_1, \ldots, P_n \in \Z[t] : 
	\begin{array}{l}	 
	|P_i| \leq H,\,  P_i>0   \\	  
	\deg(P_i)=d_i,\,  P_i>0  \\
	P_i \equiv R_i \md{W} \\ 
	\gcd(M,  P_i(n_0) )  =1
	\end{array}	\right \}
	.\] By M\"obius inversion we have \[ \sum_{\substack{k_i\in \N  \\ 
	k_i \mid M,\,  k_i \mid P_i(n_0)  }}\mu(k_i) =\begin{cases}  1, &\mbox{if } \gcd(M,  P_i(n_0) )  =1, \\ 
0, & \mbox{otherwise. }   \end{cases}\]
 Hence, denoting   the reduction of $P_i(t)$ in $(\Z/k_i)[t]$ by $F_i (t)$,
we obtain   \[
   \sum_{
\substack{
R_1,\ldots, R_n \in (\Z/W)[t]
}}
B_{\b R}(x)
  ^2
\sum_{\b k\in \N^n ,\, k_i\mid M} 
\l(  \prod_{i=1}^n \mu(k_i)  \r)
\sum_{\substack{ 
F_1 \in  (\Z/k_1)[t], 
\ldots, F_n \in (\Z/k_n)[t]
\\ 
 F_i (n_0 ) \equiv 0 \md{k_i}
} }
 \sum_{\substack{ P_1, \ldots, P_n \in \Z[t]
\\ 
|P_i| \leq H,\,  P_i>0  
  \\
P_i \equiv R_i \md{W}
\\
P_i\equiv F_i\md{k_i }
} }
   1
,\] where $
\deg(P_i)=d_i$, 
$\max\{
\deg(R_i),
\deg(F_i)\}
\leq 
d_i$.
Viewing 
the sum over the $P_i$ as a sum over $1+d_i$ integers in arithmetic progressions modulo $k_i W$
we obtain  \begin{align*}
 &
\sum_{
\substack{\b R\in (\Z/W)[t]^n
\\ \deg(R_i) 
\leq 
d_i  
}}
B_{\b R}(x)^2
\sum_{\b k\in \N^n,\, k_i\mid M} 
\l(  \prod_{i=1}^n \mu(k_i)  \r)
 \sum_{\substack{ 
F_i \in (\Z/k_i)[t]
\\
 F_i (n_0 ) \equiv 0 \md{k_i}
\\ \deg(F_i) 
\leq 
d_i
} }
 \prod_{i=1}^n 
\l(\frac{2^{d_i} H^{1+d_i}}{(k_iW)^{1+d_i}}+O\l(1+\frac{H^{d_i} }{W^{d_i}  }\r)  \r)
.\end{align*} Now
note that $W\leq H^{1/2} $ due to    $  x \leq    H^{1/4} $ and \eqref {eq:tousebound4}. 
The sum over $F_1,\ldots , F_n  $ has $\prod_{i=1}^n  k_i^{d_i}$ terms because
the condition $F_i(n_0)\equiv 0\md{k_i}$ determines uniquely
the constant term of every $F_i$  by $n_0$ and the other coefficients of $F_i$.
This gives \[
  \sum_{
\substack{\b R\in (\Z/W)[t]^n\\ \deg(R_i) 
\leq 
d_i  
}}
B_{\b R}(x)^2
\sum_{\b k\in \N^n, \, k_i\mid M} 
\l(  \prod_{i=1}^n \frac{ \mu(k_i)}{k_i}  \r) 
 \l(
1 +  O(H^{-1/2})
\r) 
\frac
{2^d
H^{d+n}}
{W^{d+n}}
 \]
and   the   identity 
$\sum_{k\mid M } \mu(k) k^{-1}= \phi(M) M^{-1}$ shows that the sum over $\b P$  in~\eqref{eq:oursum}  is 
\[    \frac{\phi(M)^n}{M^n}    
\frac {2^d H^{d+n}}  {W^{d+n}}
 \l(  1 +  O(H^{-1/2}) \r)
\sum_{ \substack{\b R\in (\Z/W)[t]^n  \\ \deg(R_i) 
\leq 
d_i  }}
\prod_{\substack{ 
\text{ prime} \, 
\ell \nmid M
\\
\ell \leq   \log x 
}} 
\l(
\frac
{1-\ell^{-1}Z_{P_1 \ldots P_n } (\ell )}
{\l(1-\ell^{-1}\r)^n }
\r)^2
 . \] 
 By Lemma~\ref{lem:secomom3} applied to $ W$,
the quantity in \eqref{eq:oursum} becomes
 \[\frac{2^d H^{d+n} M^{ n-2}}{\phi(M)^{ n}  }
 \l(
1 +  O (H^{-1/2})
\r)\prod_{\substack{  
\ell \nmid M
\\
\ell \leq   \log x 
}} 
\gamma_n(\ell) = 
\frac{2^d H^{d+n} M^{ n-2}}{\phi(M)^{ n}  }
 \prod_{\substack{  
\ell \nmid M
\\
\ell \leq   \log x 
}} 
\gamma_n(\ell) 
+O(H^{d+n-1/2})   \]  
because $\prod_\ell \gamma_n(\ell)$ converges.
\end{proof}

 \begin{remark} It would be interesting to study moments higher than the second moment in the setting of Lemma \ref{lem:secomom}. 
This has been studied previously by Kowalski \cite{MR2786162}.
\end{remark}
 
\subsection{The term $
\sum_{\b P} \mathfrak S_{\b P}(x)
 \theta_{\b P} (x)
$}
\label
{sub:corecor}

 \begin{lemma}
\label
{lem:secomomter5226anna}
Fix any $A_2>0$. Then for all $x,H\geq 1 $ such that $ 1\leq x \leq (\log H)^{A_2 }$ 
  we have 
\[
  \sum_{\substack{ \b P \in (\Z[t])^n,\, |\b P | \leq H
\\
\deg(P_i)=d_i, \,  P_i>0  
} }
 \mathfrak{S}_{\b P}(x)
  \theta_{\b P}(x) 
=x
 2^{d} H^{d+n} 
\frac{M^{n-2} }{\phi(M)^n}
\prod_{\substack{{\rm prime}\,
\ell\nmid M
\\
 \ell \leq  \log x    
}} 
\gamma_n(\ell ) 
+
O\l(   H^{d+n}\r)
.\]  
\end{lemma}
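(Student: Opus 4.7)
The plan is to interchange orders of summation so that $m$ runs outside, then to parametrise the inner sum over $\b P$ by residue classes modulo $MW$, where $W=\prod_{\ell\nmid M,\,\ell\leq \log x}\ell$ is the modulus governing $\mathfrak{S}_{\b P}(x)$. On each residue class the inner sum over $\b P$ with $P_i(m)$ prime factorises across $i=1,\ldots,n$, and I would apply Lemma~\ref{lem:inidim} to each factor uniformly in the progression modulo $MW$. The sums over residue classes are then exactly those evaluated by the Bernoulli calculations of Section~\ref{s:probabilisticmodel}.

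Concretely, I first write
\[
\sum_{\b P}\mathfrak{S}_{\b P}(x)\,\theta_{\b P}(x)=\sum_{\substack{m\leq x\\ m\equiv n_0\md{M}}}\sum_{\substack{|\b P|\leq H,\,P_i>0\\ P_i(m)\text{ prime}\,\forall i}}\mathfrak{S}_{\b P}(x)\prod_{i=1}^n\log P_i(m),
\]
and observe that $\mathfrak{S}_{\b P}(x)$ depends on $\b P$ only through $\b P\bmod M$ (via the indicator $\mathds 1(\gcd(M,\prod_i P_i(n_0))=1)$ and the prefactor $M^{n-1}/\phi(M)^n$) and through $\b P\bmod W$ (via $\prod_{\ell\mid W}(1-\ell^{-1}Z(\ell))/(1-\ell^{-1})^n$). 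Fixing residues $\b F\bmod M$ and $\b R\bmod W$, the inner sum factorises across the $n$ polynomials, and Lemma~\ref{lem:inidim} with $\Omega=MW$ yields, for each $i$,
\[
\sum_{\substack{|P_i|\leq H,\,P_i>0\\ P_i\equiv(F_i,R_i)\md{MW}\\ P_i(m)\text{ prime}}}\log P_i(m)=\frac{2^{d_i}H^{d_i+1}}{(MW)^{d_i}\phi(MW)}\mathds 1(\gcd(R_i(m)F_i(m),MW)=1)+O_A\!\l(\frac{H^{d_i+1}}{(\log H)^A}\r),
\]
uniformly in the progression. Since $MW\leq(\log H)^{O(A_2)}$ by \eqref{eq:tousebound4}, after multiplying $n$ such factors, summing over $(MW)^{d+n}$ residue classes, and over $x\leq(\log H)^{A_2}$ values of $m$, the total error is $O_A(H^{d+n}(\log H)^{O(1)-A})=O(H^{d+n})$ for $A$ chosen sufficiently large.

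For the main term I exploit $m\equiv n_0\md{M}$, which makes the two coprimality conditions modulo $M$ (the one in $\mathfrak S$ and $\gcd(\prod_iF_i(m),M)=1$) identical. A prime-by-prime count shows that the number of admissible $\b F\bmod M$ is exactly $M^d\phi(M)^n$. The sum over $\b R\bmod W$ splits by the Chinese Remainder Theorem across primes $\ell\mid W$, and Lemma~\ref{lem:corolhahparapoligelioorimaprosopa} evaluates each local factor as $\ell^{d+n}(1-\ell^{-1})^n\gamma_n(\ell)$, giving $W^d\phi(W)^n\prod_{\ell\mid W}\gamma_n(\ell)$ in total. Assembling everything and using $\phi(MW)=\phi(M)\phi(W)$, the contribution of each eligible $m$ simplifies to
\[
\frac{M^{n-1}}{\phi(M)^n}\cdot\frac{2^dH^{d+n}}{(MW)^d\phi(MW)^n}\cdot M^d\phi(M)^n\cdot W^d\phi(W)^n\prod_{\ell\mid W}\gamma_n(\ell)=\frac{2^dH^{d+n}M^{n-1}}{\phi(M)^n}\prod_{\ell\mid W}\gamma_n(\ell).
\]
Summing over the $x/M+O(1)$ values of $m\equiv n_0\md{M}$ in $[1,x]$ produces the claimed main term.

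The only real difficulty is the bookkeeping needed to verify the cleanness of the CRT decomposition and the matching of coprimality indicators; once set up, the cancellations are forced. The error analysis is routine because $MW$ is polylogarithmic in $H$ and the implicit constant $A$ in Lemma~\ref{lem:inidim} is at our disposal.
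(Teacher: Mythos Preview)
Your proposal is correct and follows essentially the same route as the paper: interchange summation to bring $m$ outside, sort $\b P$ by residues modulo the product of small primes, apply Lemma~\ref{lem:inidim} to each factor, and evaluate the residue-class sum via Lemma~\ref{lem:corolhahparapoligelioorimaprosopa} and the Chinese Remainder Theorem. The only cosmetic difference is that the paper applies Lemma~\ref{lem:inidim} with $\Omega=W$ and discards the condition $\gcd(P_i(n_0),M)=1$ beforehand as an $O(H^{d_i}\log H)$ error (since $P_i(m)$ prime and $m\equiv n_0\md{M}$ then forces $P_i(m)$ to divide $M$), whereas you carry this condition through by taking $\Omega=MW$ and counting the admissible $\b F\bmod M$ directly; both lead to the same main term and both error analyses are governed by $MW\ll(\log H)^{2A_2}$.
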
 
\begin{proof}   Using the definition of $\theta_\b P$ in~\eqref{def:thetP}
and changing the order of summation turns 
the sum over $\b P$ in our lemma into
\[  
\sum_{\substack{ m \in \N \cap [1,x]  \\ 
m\equiv n_0 \md{M}}}
\sum_{\substack{ \b P \in (\Z[t])^n,\, |\b P | \leq H
\\
\deg(P_i)=d_i, \,  P_i>0  
\\
P_i(m ) \text{ prime for}\, i=1,\ldots,n 
} }
 \mathfrak{S}_{\b P}(x)
\prod_{i=1}^n \log  P_i(m )
.\]
By~\eqref{eq:bachbibaldi} and~\eqref{eq:oursum} 
we can write this as 
\[  
\frac{M^{n-1} }{\phi(M)^n}
\sum_{\substack{ m \in \N \cap [1,x]  \\ 
m\equiv n_0 \md{M}}}
\sum_{\substack{ \b P \in (\Z[t])^n,\, |\b P | \leq H
\\ \deg(P_i)=d_i, \,  P_i>0  
\\ \gcd(M,  P_i(n_0) )  =1
\\ P_i(m ) \text{ prime for}\, i=1,\ldots,n 
} }
\Big(\prod_{i=1}^n \log  P_i(m ) \Big)
 B_{\b P}(x) .\] 
 Letting $R_i$ denote the reduction of $P_i $ in $(\Z/W)[t]$ we note that  
$ B_{\b P}(x)= B_{\b R}(x)$, hence we 
obtain 
  \beq {eq:fagakiexoume?} { \frac{M^{n-1} }{\phi(M)^n}
\sum_{\substack{  1\leq m  \leq x   \\ m  \equiv n_0 \md{M}  }}   
  \sum_{ \substack{\b R\in (\Z/W)[t]^n\\
\deg(R_i)\leq d_i  }}
B_{\b R}(x)   \prod_{i=1}^n  \l( \Osum_{\substack{ |P| \leq H  \\  P\equiv R_i \md{W}  }  } \log P( m )    \r) ,} where $\Osum$ has the extra conditions 
$     \deg(P)=d_i$,  $\gcd(P(n_0), M )=1  $, and $ P(m) $ is prime. The polynomials $P$ with $ \gcd(P(n_0), M )\neq 1 $ contribute $ O(H^{d_i} \log H)$ towards $\Osum$ because $P( m ) $ must be a prime divisor of $M$. Hence, ignoring the condition   $ \gcd(P(n_0), M )=1$,
brings $\Osum $ to a shape     suitable for the application of Lemma \ref {lem:inidim}. Thus for all $A>0 $ we have 
\[\Osum_{\substack{|P|\leq H \\ P\equiv R \md{W} }} \log P(m )=      \frac{2^{d_i }   H^{d_i +1}  }{  W^{d_i   }  \phi(W )   }   \mathds 1 ( \gcd( R_i( m ) , W) = 1 ) +O_A\l( \frac{ H^{d_i +1}  }{(\log H)^A}  \r) .\] 
     To study the contribution of the error term towards \eqref {eq:fagakiexoume?}
we bound every other  $\Osum  $ trivially by $ O(H^{1+d_i } \log H )$, hence we obtain
\[\ll  \frac{H^{d+n }} {(\log H)^{A-n}}   x    \sum_{ \substack{\b R\in (\Z/W)[t]^n\\ \deg(R_i)\leq d_i  }}  B_{\b R}(x) 
  \ll  \frac{H^{d+n }} {(\log H)^{A-n}}   x W^{d+n }  ( \log \log x)^n   ,\]  where we used  
$$ B_{\b R}(x)=
\prod_{\substack{ 
\text{ prime} \, 
\ell \nmid M
\\
\ell \leq   \log x 
}} 
\frac
{1-\ell^{-1}Z_{R_1 \ldots R_n } (\ell )}
{\l(1-\ell^{-1}\r)^n }
\leq 
\prod_{\substack{  
\ell \leq   \log x 
}}  
 \l(1-\ell^{-1}\r)^{-n } \ll ( \log \log x)^n  $$ which follows   from Mertens' theorem. Using \eqref {eq:tousebound4}, $ x \leq (\log H)^{A_2}$ and enlarging $A$ 
we see that the contribution towards \eqref {eq:fagakiexoume?} is $O(H^{d+n }  (\log H)^{-A})$. The main term is 
\[
\frac{ 2^d H^{d+n}   }{W^{d+n } \phi(W)^n }
\frac{M^{n-1} }{\phi(M)^n}
\sum_{\substack{ 
1\leq m  \leq x  
\\
m  \equiv n_0 \md{M} 
}}   
 \sum_{
\substack{\b R\in (\Z/W)[t]^n, \,
\deg(R_i)\leq d_i\\\gcd( R_i ( m ) , W) = 1 
}}
B_{\b R}(x)  
 .\]   By 
Lemma~\ref
{lem:corolhahparapoligelioorimaprosopa} and a factorisation argument  this becomes   \[
 2^d H^{d+n}  
\frac{M^{n-1} }{\phi(M)^n}
\l( x/M +O(1 ) \r)
 \prod_{ \ell \mid W } 
 \gamma_n(\ell)  = 
 2^d H^{d+n}  
\frac{xM^{n-2} }{\phi(M)^n}
 \prod_{ \ell \mid W } 
 \gamma_n(\ell)  + O\l (  H^{d+n}   \r) 
. \qedhere \]
\end{proof}

\subsection{The proof of  Theorem~\ref{thm:almostal}  }
\label
{s:asintheothersection}
Recall that $A_1,A_2$ are fixed constants with $n< A_1< A_2$
and that 
$(\log H)^{A_1}<x\leq(\log H)^{A_2}$.
Then~\eqref{eq:finsecmom},
together with 
  Lemmas  
\ref {lem:secomom}  and~\ref{lem:secomomter5226anna},
shows 
 that the right hand 
side of 
~\eqref
{eq:kakakafe}
is $
\ll
 x^2 H^{d+n} 
( \log x  )^{-1}$. 
The reason behind this is that 
the main terms
compensate each other.
Since
$H^{d+n} 
\ll 
\#\texttt{Poly}(H) 
$,
this concludes the proof of  
Theorem~\ref{thm:almostal}.

\subsection{The proof of 
Theorem \ref{cool}
}
\label
{s:kimamaior8ios}
 
To study the numerator
in the left hand side of~\eqref{eq:mahlersymphny5}
we use Theorem~\ref{thm:almostal}
to see that for almost all
 Schinzel $n$-tuples $\b P$
the prime
counting 
 function $\theta_\b P(x)$
is closely approximated by 
$\mathfrak S_\b P(x)x$.
\begin{lemma}
\label
{lem:mahlersymph3}
Let $\epsilon:\R \to (0,\infty)$ be a function.  
Fix any $A_1, A_2 $
with $n<A_1<A_2$.
Then for any $x, H\geq 2$ such that
$(\log H)^{A_1}<x< (\log H)^{A_2}$
we have {\rm 
$$\frac{
\#\{
\b P\in \texttt{Poly}(H)
: \b P \text{\ is Schinzel}, 
|\theta_\b P(x) - \mathfrak S_\b P(x) x | \leq  \epsilon(x) x 
 \}}
{
\#\{
\b P\in \texttt{Poly}(H)
: \b P \text{\ is Schinzel}
\}}
=1+O\l(
\frac{1}{\epsilon(x) (\log x)^{1/2}}
\r)
.$$ } 
\end{lemma}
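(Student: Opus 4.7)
The plan is to deduce the lemma from Theorem~\ref{thm:almostal} by a direct application of Markov's inequality, combined with the density estimate of Proposition~\ref{prop:betagammadelta} to ensure that the Schinzel $n$-tuples constitute a positive proportion of $\texttt{Poly}(H)$.

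First I would invoke Theorem~\ref{thm:almostal} in the specified range $(\log H)^{A_1}<x\leq (\log H)^{A_2}$, which yields
\[
\sum_{\b P\in \texttt{Poly}(H)}\bigl|\theta_\b P(x)-\mathfrak S_\b P(x)x\bigr|
=\c R(x,H)\cdot\#\texttt{Poly}(H)\ll \frac{x}{\sqrt{\log x}}\,\#\texttt{Poly}(H).
\]
By Markov's inequality applied with threshold $\epsilon(x)x$, the number of $\b P\in \texttt{Poly}(H)$ (Schinzel or not) for which $|\theta_\b P(x)-\mathfrak S_\b P(x)x|>\epsilon(x)x$ is bounded by
\[
\frac{\c R(x,H)\cdot\#\texttt{Poly}(H)}{\epsilon(x)x}\ll \frac{\#\texttt{Poly}(H)}{\epsilon(x)\sqrt{\log x}}.
\]

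Next, I would use Proposition~\ref{prop:betagammadelta} to estimate the denominator. It gives
\[
\#\{\b P\in \texttt{Poly}(H):\b P\text{ is Schinzel}\}
=2^d\Bigl(\prod_{\ell\nmid M}(1-c_\ell)\Bigr)\Bigl(\tfrac{H}{M}\Bigr)^{d+n}+O\Bigl(\tfrac{H^{d+n}}{\log H}\Bigr).
\]
The Euler product converges to a strictly positive constant: for $\ell>d$ one has $1-c_\ell=\prod_{i=1}^n(1-\ell^{-(d_i+1)})$, whose logarithm is summable, and for the finitely many primes $\ell\leq d$ each factor $1-c_\ell$ is positive since Schinzel $n$-tuples with the prescribed congruence conditions exist. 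Hence the Schinzel count is $\gg H^{d+n}\asymp \#\texttt{Poly}(H)$.

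Combining the two estimates, the proportion of Schinzel $n$-tuples $\b P\in \texttt{Poly}(H)$ for which the inequality $|\theta_\b P(x)-\mathfrak S_\b P(x)x|\leq\epsilon(x)x$ fails is at most
\[
\frac{\#\texttt{Poly}(H)/\bigl(\epsilon(x)\sqrt{\log x}\bigr)}{\#\{\b P\in \texttt{Poly}(H):\b P\text{ Schinzel}\}}
\ll \frac{1}{\epsilon(x)\sqrt{\log x}},
\]
which yields the stated asymptotic by taking the complement. The argument is essentially routine once Theorem~\ref{thm:almostal} is in hand; the only point requiring care is confirming that the Euler product $\prod_{\ell\nmid M}(1-c_\ell)$ is positive, which is the mildest possible condition and is ensured by the hypotheses on $(\b Q,n_0,M)$.
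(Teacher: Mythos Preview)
Your proposal is correct and follows essentially the same route as the paper: Markov's inequality applied to the $L^1$ bound of Theorem~\ref{thm:almostal}, combined with Proposition~\ref{prop:betagammadelta} to show that Schinzel $n$-tuples have positive density in $\texttt{Poly}(H)$. One minor remark: the positivity of each factor $1-c_\ell$ does not actually depend on the hypotheses on $(\b Q,n_0,M)$, since $1-c_\ell=\ell^{-(d+n)}\#\texttt{T}_\ell$ is automatically positive (e.g.\ the $n$-tuple of constant polynomials $1$ lies in $\texttt{T}_\ell$ for every $\ell$).
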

\begin
{proof}
It is enough to show that
 \beq
{eq:mler}
{
\frac{
\#\{
\b P\in \texttt{Poly}(H)
: \b P \text{\ is Schinzel}, 
|\theta_\b P(x) - \mathfrak S_\b P(x) x | 
>
 \epsilon(x) x 
 \}}
{
\#\{
\b P\in \texttt{Poly}(H)
: \b P \text{\ is Schinzel}
\}}
\ll
 \frac{1}{\epsilon(x) (\log x)^{1/2} }
.}
The values of the function $|\theta_\b P(x) - \mathfrak S_\b P(x) x | 
 \epsilon(x)^{-1}  x^{-1}   $ are non-negative, and greater than 1 when
$|\theta_\b P(x) - \mathfrak S_\b P(x) x | > \epsilon(x) x $.
Thus the left hand side of~\eqref{eq:mler} is at most 
\[
\frac{1}
{
\#\{
\b P\in \texttt{Poly}(H)
: \b P \text{\ is Schinzel}
\}}
\sum_{\substack{\b P\in \texttt{Poly}(H)
 \\ \b P \text{\ is Schinzel}
 }}
\frac{|\theta_\b P(x) - \mathfrak S_\b P(x) x | }
 {\epsilon(x)  x}  .\]
Using Theorem~\ref{thm:almostal} we see that this is $$
\ll
\frac{   \#\texttt{Poly}(H)
}
{
\#\{
\b P\in \texttt{Poly}(H)
: \b P \text{\ is Schinzel}
\}}  \epsilon(x)^{-1} (\log x )^{-1/2}
.$$
An application of Proposition~\ref{prop:betagammadelta}
concludes the proof.   
\end
{proof}
We next show that if 
 $\b P$ is Schinzel,
then 
$\mathfrak S_\b P(x)$ stays at a safe 
distance from zero. Thus, 
$\mathfrak S_\b P(x)$ 
may be thought of as a `detector'
of Schinzel $n$-tuples.
\begin{lemma}
\label
{lem:beta0}
Let $\b P$ be a Schinzel $n$-tuple such that
$\prod_{i=1}^n P_i(n_0)$ and $M$ are coprime.
Then there exists a positive constant
$\beta_0=\beta_0(n,n_0,M, d_1, \ldots, d_n)$
such that  for all sufficiently large $x$ we have
$
\mathfrak S_{\b P}(x)
>
 \beta_0 (\log \log x)^{n-d}
$. 
\end{lemma}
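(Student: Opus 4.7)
The plan is to decompose the Euler product defining $\mathfrak{S}_{\b P}(x)$ into a part that is bounded below by a positive constant and a part to which Mertens' theorem applies. Since by hypothesis $\gcd(M,\prod_{i=1}^n P_i(n_0))=1$, the indicator in the definition equals $1$ and the prefactor $M^{n-1}/\phi(M)^n$ contributes a positive constant depending only on $n$ and $M$. So the task reduces to bounding the Euler product from below.

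The key step is to split the product over primes $\ell\nmid M$ with $\ell\leq \log x$ at $\ell=d$. For the finitely many primes $\ell\leq d$, the Schinzel assumption $Z_{P_1\cdots P_n}(\ell)\neq \ell$ gives $Z_{P_1\cdots P_n}(\ell)\leq \ell-1$, hence $1-\ell^{-1}Z_{P_1\cdots P_n}(\ell)\geq 1/\ell$, so each factor is at least $\ell^{-1}(1-\ell^{-1})^{-n}$, and the finite product over this range is bounded below by some positive constant $c_0=c_0(n,d_1,\ldots,d_n,M)$. For the tail $d<\ell\leq \log x$, I use that $\deg(P_1\cdots P_n)=d$ gives $Z_{P_1\cdots P_n}(\ell)\leq d$, and write
\[
\frac{1-\ell^{-1}Z_{P_1\cdots P_n}(\ell)}{(1-\ell^{-1})^n}\;\geq\; \frac{1-d/\ell}{(1-\ell^{-1})^d}\cdot (1-\ell^{-1})^{d-n}.
\]

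The first factor is convergent as an infinite product over primes: a Taylor expansion shows
\[
\log\frac{1-d/\ell}{(1-\ell^{-1})^d}=-\frac{d(d-1)}{2\ell^2}+O(\ell^{-3}),
\]
so $\prod_{\ell>d}(1-d/\ell)(1-\ell^{-1})^{-d}$ converges to some $c_1>0$. Moreover Bernoulli's inequality $(1-1/\ell)^d\geq 1-d/\ell$ makes every factor lie in $(0,1]$, so every partial product over $d<\ell\leq \log x$ is itself $\geq c_1$. The remaining Mertens-type factor $\prod_{d<\ell\leq \log x,\,\ell\nmid M}(1-\ell^{-1})^{d-n}$, after a finite positive multiplicative adjustment for the discarded primes $\ell\leq d$ and $\ell\mid M$, is by Mertens' theorem $\prod_{\ell\leq y}(1-1/\ell)\sim e^{-\gamma}/\log y$ applied at $y=\log x$ asymptotic to $e^{-\gamma(d-n)}(\log\log x)^{n-d}$ (recall $d\geq n$ since each $d_i\geq 1$).

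Multiplying these pieces yields $\mathfrak{S}_{\b P}(x)\geq \beta_0(\log\log x)^{n-d}$ for all sufficiently large $x$, with $\beta_0>0$ depending only on $n,n_0,M,d_1,\ldots,d_n$. There is no genuine obstacle: the argument is a bookkeeping exercise, and the only mildly delicate point is the convergence of $\prod_{\ell>d}(1-d/\ell)(1-1/\ell)^{-d}$, which is handled by the Taylor expansion above. The uniformity of $\beta_0$ over all Schinzel $n$-tuples $\b P$ comes from the fact that all bounds depend only on the degree data and the coprimality hypothesis, not on the specific polynomials.
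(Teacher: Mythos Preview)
Your proof is correct and follows essentially the same route as the paper: split the Euler product at $\ell=d$, use the Schinzel condition to get $Z\leq \ell-1$ for small primes and $Z\leq d$ for large primes, and apply Mertens' theorem to the factor $(1-\ell^{-1})^{d-n}$. The only minor omission is that your claim ``$\deg(P_1\cdots P_n)=d$ gives $Z_{P_1\cdots P_n}(\ell)\leq d$'' tacitly relies on $P_1\cdots P_n$ not reducing to the zero polynomial modulo $\ell$, which is exactly what the Schinzel condition $Z\neq\ell$ guarantees; the paper makes this step explicit, but your argument is otherwise identical in structure.
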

\begin{proof}
Our assumption implies that
\[
\mathfrak{S}_\b P(x)
\gg 
 \prod_{\substack{ 
\text{ prime}\, 
\ell \nmid M
\\
\ell \leq d
}} 
\frac
{1-\ell^{-1}Z_{P_1 \ldots P_n } (\ell )}
{\l(1-\ell^{-1}\r)^n }
 \prod_{\substack{ 
\text{ prime} \, 
\ell \nmid M
\\
d< \ell \leq  \log x 
}} 
\frac
{1-\ell^{-1}Z_{P_1 \ldots P_n } (\ell )}
{\l(1-\ell^{-1}\r)^n }
.\]
To deal with the product over $\ell \leq d $,
we note that 
$Z_{P_1 \ldots P_n}(\ell) \neq \ell$ gives
$Z_{P_1 \ldots P_n}(\ell) \leq  \ell -1 $.
In particular, 
\[
  \prod_{\substack{ 
\text{ prime} \, 
\ell \nmid M
\\
\ell \leq d
}} 
\frac
{1-\ell^{-1}Z_{P_1 \ldots P_n } (\ell )}
{\l(1-\ell^{-1}\r)^n }
 \geq 
 \prod_{\substack{ 
\text{ prime} \, 
\ell \nmid M
\\
\ell \leq d
}} 
\frac
{\ell^{-1}}{\l(1-\ell^{-1}\r)^n }
\gg 
1. \]
To deal with the product over $\ell >d $
we observe 
 that $Z_{P_1 \ldots P_n}(\ell) \neq \ell$
implies that  
$P_1\ldots P_n$ is not identically
zero  in  $\F_\ell$, thus
$Z_{P_1 \ldots P_n}(\ell) \leq d$. 
This shows that  
\[
  \prod_{\substack{ 
\text{ prime} \, 
\ell \nmid M
\\
d< \ell \leq   \log x 
}} 
\frac
{1-\ell^{-1}Z_{P_1 \ldots P_n } (\ell )}
{\l(1-\ell^{-1}\r)^n }
\geq 
  \prod_{\substack{ 
\text{ prime} \, 
\ell \nmid M
\\
d< \ell \leq  \log x 
}} 
\frac
{1-d\ell^{-1}}
{\l(1-\ell^{-1}\r)^n }
\gg 
\prod_{d<\ell \leq
 \log x }
 \frac
{1-d\ell^{-1}}
{\l(1-\ell^{-1}\r)^n }
.\] For each fixed  $d \in \N$    we have 
$$\lim_{\psi\to 0 }
\psi^{-2} \left(
\frac{1-d\psi}{(1-\psi)^d} -1\right)  =-\frac{d(d-1) }{2} 
.$$ 
In particular, for each $d,n\in \mathbb N$ there exist constants $\psi_{d,n}>0, K_{d,n}>0$,
such that 
$$
\frac{1-d\psi}{(1-\psi)^n} \geq (1-\psi)^{d-n} \l(1-K_{d,n} \psi^2\r)
$$ for all $\psi\in (0,\psi_{d,n})$. We obtain 
\begin{align*}
 \prod_{d<\ell \leq  \log x } \frac{1-d\ell^{-1}}{\l(1-\ell^{-1}\r)^n }
&\gg_{d,n}
 \prod_{\max\{d,\psi_{d,n}^{-1}, K_{d,n} \}< \ell \leq  \log x } \frac{1-d\ell^{-1}}{\l(1-\ell^{-1}\r)^n }
\\&\geq 
 \prod_{\max\{d,\psi_{d,n}^{-1} , K_{d,n} \}< \ell \leq  \log x }
\l(1-\ell^{-1 }\r )^{d-n }\l(1- K_{d,n} \ell^{-2}\r) 
.\end{align*} 
By Mertens' estimate
this is $\gg_{d,n} (\log \log x )^{-n+d} $.
\end{proof}

\noindent{\bf End of proof of  Theorem \ref{cool}.}
Take $A_1=n+A/2$, $A_2=n+3A/4$ and let  $ x ,H,\epsilon(x)$ be   
as in Lemma~\ref{lem:mahlersymph3}. 
By Lemma \ref{lem:beta0}, $|\theta_\b P(x) - \mathfrak S_\b P(x) x | \leq  \epsilon(x) x $
implies 
$$
\theta_\b P(x) \geq \mathfrak S_\b P(x) x  -\epsilon(x)  x 
 \geq  
   \beta_0 (\log \log x)^{n-d} x  - \epsilon(x) x 
.$$
Hence Lemma~\ref{lem:mahlersymph3} gives
\[\frac{
\#\{
\b P\in \texttt{Poly}(H)
: \b P \text{\ is Schinzel}, 
\theta_\b P(x)
\geq 
(   \beta_0 (\log \log x)^{n-d}   - \epsilon(x) ) x 
 \}}
{
\#\{
\b P\in \texttt{Poly}(H)
: \b P \text{\ is Schinzel}
\}}
=1+O\l(
\frac{1}{\epsilon(x) (\log x)^{1/2} }
\r)
.\]
The choice $\epsilon(x)= \frac{1}{2} \beta_0 (\log \log x )^{n-d}$ gives
 \[\frac{
\#\{
\b P\in \texttt{Poly}(H)
: \b P \text{\ is Schinzel}, 
\theta_\b P(x) \geq   \frac{ \beta_0 }{2} (\log \log x)^{n-d}    x 
 \}}
{
\#\{
\b P\in \texttt{Poly}(H)
: \b P \text{\ is Schinzel}
\}}
=1+O\l(
\frac{(\log \log x )^{d-n}}{ \sqrt{\log x }}
\r)
. \] Since
$(\log H)^{A_1} < x \leq (\log H)^{A_2} $,  
the
error term is 
$\ll (\log \log \log H )^{d-n}   (\log \log H )^{-1/2} $, thus, 
 \beq
{eq:Monteverdi - Vespro della Beata Vergine}
{ 
\hspace{-0,2cm}
\frac{
\#\l\{
\b P\in \texttt{Poly}(H)
: \b P \text{\ is Schinzel}, 
\theta_\b P(x)
\geq  \frac{\beta_0 x}{2( \log  \log x)^{d-n} }   
 \r\}}
{
\#\{
\b P\in \texttt{Poly}(H)
: \b P \text{\ is Schinzel}
\}}
=1+O\l(
\frac{(\log \log \log H )^{d-n}}{ \sqrt{\log \log  H } }
\r)
. }  
It remains to find  a lower bound for $ \#S_{n+A}(\b P)$.
Observing that 
for all, except $O(H^{n+d-1/2})$, $n$-tuples  $\b P$ with $|\b P|\leq H$ 
one has $|\b P| > H^{1/2}$, we see that 
$x\leq (\log H)^{A_2}\ll (\log |\b P|)^{A_2} \leq (\log |\b P| )^{n+A}$,
hence $$ \theta_\b P (x )=
\sum_{\substack{ m \in \N \cap [1,x]  , \,
m\equiv n_0\md{M}
\\
P_i(m ) \text{ prime for}\, i=1,\ldots,n }}
\prod_{i=1}^n  \log  P_i(m )
 \leq \#S_{n+A}(\b P)  \prod_{i=1}^n  \log ((d_i+1) H x^{d_i}  ) 
$$ due to $m\leq x $ and $|\b P| \leq H$.
From $x \leq (\log H)^{A_2} $
we obtain $ \theta_\b P (x ) \ll \#S_{n+A}(\b P)   (\log H)^n  $.
By~\eqref{eq:Monteverdi - Vespro della Beata Vergine}
all, except $O(H^{n+d } (\log \log \log H )^{d-n}( \log \log  H )^{-1/2}   )$,
Schinzel $n$-tuples $   \b P\in \texttt{Poly}(H)$ fulfil $\theta_\b P(x) \geq   \frac{ \beta_0 }{2} (\log \log x)^{n-d}    x $. For these $\b P$ we use the upper and the lower bound for $\theta_\b P(x)$
 in conjunction with 
$x\geq (\log H)^{A_1}$
 to get  the following when $H\gg_{d,n, A} 1 $:
\[(\log H)^{n+A/3}\leq 
\frac{(\log H)^{A_1}}
{(\log \log  \log H)^{n-d} }
\ll
\frac{\beta_0 x}{2  (\log \log  x)^{n-d}  } 
 \leq
 \theta_\b P (x ) \ll \#S_{n+A}(\b P)   (\log H)^n   
.\] Together with
$|\b P| > H^{1/2}$, this gives  $\#S_{n+A}(\b P)  \geq (\log |\b P|)^{A/3}$. 
\qed

\section{Random Ch\^atelet varieties} 
\label{rat}

\subsection{Irreducible polynomials}
 
Let $K$ be a finite field extension of $\Q$ of degree $r=[K:\Q]$. Let 
${\rm N}_{K/\Q}: K\to\Q$ be the norm.
Choose a $\Z$-basis $\omega_1,\ldots,\omega_r$ of the ring of integers $\mathcal O_K\subset K$.
For $\z=(z_1,\ldots,z_r)$
we define a norm form 
$${\rm N}_{K/\Q}(\z)={\rm N}_{K/\Q}(z_1\omega_1+\ldots+z_r\omega_r).$$
For a positive integer $d$
consider the affine $\Z$-space $\AA_\Z^{d+2}=\AA^1_\Z\times \AA^{d+1}_\Z$, where 
$\AA^{d+1}_\Z={\rm Spec}(\Z[x_0,\ldots,x_d])$
and 
$\AA^1_\Z={\rm Spec}(\Z[t])$.
Let $V$ be the open subscheme of $\AA_\Z^{d+2}$ given by
$$P(t,\x):=x_dt^d+x_{d-1}t^{d-1}+\ldots+x_1t+x_0\neq0,$$
where $\x=(x_0,\ldots,x_d)$.
Let $U$ be the affine scheme given by $$P(t,\x)={\rm N}_{K/\Q}(\z)\neq0,$$
and let $f:U\to V$ be the natural morphism.
Note that $U_\Q$ is smooth over $V_\Q$ with geometrically integral fibres.
Let $g:U\to \AA^1_\Z$ be the projection to the variable $t$,
and let $h:U\to \AA^{d+1}_\Z$ be the projection to the variable $\x$.

For a ring $R$ and a point $\m=(m_0,\ldots,m_d)\in R^{d+1}$ 
of $\AA_\Z^{d+1}$ define $U_\m=h^{-1}(\m)$.
Then $g:U_\m\to \AA^1_R\setminus \{P(t,\m)=0\}$ is a morphism given by coordinate $t$.
For $\nu\in R$ we define $U_{\nu,\m}=f^{-1}(\nu,\m)$.

For a prime $p$, a point $(\nu,\m)\in \Z_p^{d+2}$ belongs to $V(\Z_p)$
if and only if $P(\nu,\m)\in\Z_p^*$. Similarly, $U(\Z_p)$ in 
$\Z_p^{d+2}\times(\mathcal O_K\otimes\Z_p)$
is given by $P(\nu,\m)={\rm N}_{K/\Q}(\z)\in\Z_p^*$.

\begin{lemma} \label{erti}
Let $S$ be the set of primes where $K/\Q$ is ramified. Then
for any $p\notin S$ and any $(\nu,\m)\in V(\Z_p)$ the fibre $U_{\nu,\m}$ has a $\Z_p$-point.
\end{lemma}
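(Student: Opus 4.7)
The plan is to reduce the claim to the statement that every $p$-adic unit is a norm from $\mathcal{O}_K \otimes \Z_p$ when $p$ is unramified in $K$, and then exploit the local decomposition.

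First I would unpack the hypotheses. The condition $(n,\m) \in V(\Z_p)$ means precisely that $u := P(n,\m)$ is a unit in $\Z_p$. A $\Z_p$-point of $U_{n,\m}$ is, by the defining equations, an element $\z \in \mathcal{O}_K \otimes \Z_p$ with $\mathrm{N}_{K/\Q}(\z) = u$. So the lemma is equivalent to:
\begin{equation*}
\mathrm{N}_{K/\Q}\bigl(\mathcal{O}_K \otimes \Z_p\bigr) \supseteq \Z_p^*,
\end{equation*}
whenever $p \notin S$.

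Next I would use the standard decomposition
\begin{equation*}
\mathcal{O}_K \otimes \Z_p \;\cong\; \prod_{\mathfrak{p} \mid p} \mathcal{O}_{K_\mathfrak{p}},
\end{equation*}
under which $\mathrm{N}_{K/\Q}$ becomes the product of the local norms $\mathrm{N}_{K_\mathfrak{p}/\Q_p}$. Consequently, it suffices to write $u$ as a single local norm: choose any prime $\mathfrak{p}$ of $K$ above $p$, produce some $z_\mathfrak{p} \in \mathcal{O}_{K_\mathfrak{p}}^*$ with $\mathrm{N}_{K_\mathfrak{p}/\Q_p}(z_\mathfrak{p}) = u$, and take $z_\mathfrak{q} = 1$ at every other $\mathfrak{q} \mid p$. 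The product of norms then equals $u$, giving the required $\z$.

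Thus the whole lemma comes down to the well-known fact that for an unramified finite extension $L/\Q_p$ the norm map $\mathrm{N}_{L/\Q_p} : \mathcal{O}_L^* \to \Z_p^*$ is surjective, which I would invoke directly. (For a self-contained argument one first notes that $L/\Q_p$ being unramified implies the induced norm on residue fields $\F_q^* \to \F_p^*$ is the map $x \mapsto x^{(q-1)/(p-1)}$, which is surjective; together with surjectivity on principal units via a straightforward Hensel-type successive-approximation argument comparing $1+\mathfrak{p}^i\mathcal{O}_L$ with $1+p^i\Z_p$, this yields surjectivity on all of $\mathcal{O}_L^*$.) No real obstacle arises; the only thing to be careful about is to use the hypothesis $p \notin S$ at exactly the step where unramifiedness of every $K_\mathfrak{p}/\Q_p$ is invoked, since the surjectivity of the norm on unit groups can fail in the ramified case.
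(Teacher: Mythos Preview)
Your proposal is correct and follows essentially the same approach as the paper: both reduce to the fact that for an unramified finite extension $K_v/\Q_p$ every element of $\Z_p^*$ is the norm of an integer in $K_v$. You have simply unpacked the reduction via the decomposition $\mathcal{O}_K\otimes\Z_p\cong\prod_{\mathfrak p\mid p}\mathcal{O}_{K_{\mathfrak p}}$ more explicitly than the paper's one-line citation.
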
  
 \begin{proof} This follows from the fact that
for any finite unramified extension $\Q_p\subset K_v$ any element of $\Z_p^*$ is the norm of 
an integer in $K_v$, see \cite[Ch. 1, \S 7]{CF}.
\end{proof}
\begin{lemma} \label{ori}
Let $p$ be a prime and let $N\in U(\Q_p)$.
There is a positive integer $M$ such that if $\nu\in\Q_p$ and 
$\m\in(\Q_p)^{d+1}$ satisfy
$$\max\big( |\nu-g(N)|_p, |\m-h(N)|_p\big)\leq p^{-M},$$ then
$U_{\nu,\m}(\Q_p)\neq\emptyset$.
\end{lemma}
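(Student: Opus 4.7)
The statement says that having a $\Q_p$-point in a fibre $U_{n,\m}$ is an open condition on the base point $(n,\m)\in V(\Q_p)$ around $f(N)$. This is a manifestation of the openness of the image of a smooth morphism on $\Q_p$-points, and it can be made explicit via Hensel's lemma applied to the norm equation.

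Write $N=(n_0,\m_0,\z_0)$ with $\z_0=(z_1^0,\ldots,z_r^0)$, where $r=[K:\Q]$, and set
$$\alpha:=P(n_0,\m_0)={\rm N}_{K/\Q}(\z_0)\in\Q_p^*.$$
Since ${\rm N}_{K/\Q}(\z)$ is homogeneous of degree $r$ in the variables $z_1,\ldots,z_r$, Euler's identity
$$r\cdot{\rm N}_{K/\Q}(\z)=\sum_{i=1}^r z_i\,\frac{\partial\,{\rm N}_{K/\Q}}{\partial z_i}(\z)$$
combined with $\alpha\neq 0$ forces $\frac{\partial\,{\rm N}_{K/\Q}}{\partial z_i}(\z_0)\neq 0$ for at least one $i$. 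After relabelling the $\omega_j$, we may assume this holds for $i=1$.

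The plan is now to fix $z_2^0,\ldots,z_r^0$ and solve for $z_1$ as $(n,\m)$ varies. Consider the one-variable polynomial
$$F_{n,\m}(T):={\rm N}_{K/\Q}(T,z_2^0,\ldots,z_r^0)-P(n,\m)\in\Q_p[T],$$
whose coefficients depend continuously (indeed polynomially) on $(n,\m)\in\Q_p^{d+2}$. At $(n,\m)=(n_0,\m_0)$ the polynomial has the root $T=z_1^0$ with derivative $F'_{n_0,\m_0}(z_1^0)=\frac{\partial\,{\rm N}_{K/\Q}}{\partial z_1}(\z_0)\neq 0$. By a standard version of Hensel's lemma allowing simultaneous perturbation of both the polynomial and the approximate root (e.g.\ via Newton's method, or equivalently via the $p$-adic implicit function theorem applied to the smooth morphism $f:U_\Q\to V_\Q$ noted just before the lemma), there is a positive integer $M$ such that whenever $\max(|n-n_0|_p,|\m-\m_0|_p)\leq p^{-M}$, the polynomial $F_{n,\m}(T)$ possesses a root $z_1\in\Q_p$ close to $z_1^0$, and moreover $P(n,\m)\neq 0$ by continuity. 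Setting $\z=(z_1,z_2^0,\ldots,z_r^0)$ then produces a $\Q_p$-point of $U_{n,\m}$.

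There is no real obstacle: the only thing to verify carefully is the non-vanishing of some partial derivative of ${\rm N}_{K/\Q}$ at $\z_0$, which is where non-degeneracy of the fibre at $N$ enters, and this is handled by Euler's identity together with $\alpha\neq 0$. The choice of $M$ is determined by the $p$-adic valuation of $\frac{\partial\,{\rm N}_{K/\Q}}{\partial z_1}(\z_0)$ and the magnitudes of the Taylor coefficients of $F_{n_0,\m_0}$ at $z_1^0$, but an explicit value is unnecessary for the statement.
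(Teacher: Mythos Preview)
Your proof is correct. Both your argument and the paper's rest on the same underlying fact, namely that $f:U_\Q\to V_\Q$ is smooth and hence induces an open map on $\Q_p$-points; you simply unpack this by hand. The paper invokes the general statement that smooth morphisms of varieties over $\Q_p$ are open on rational points (citing Conrad), whereas you verify smoothness at $N$ explicitly via Euler's identity --- the non-vanishing partial derivative is exactly the Jacobian criterion for the fibre --- and then run Hensel's lemma in one variable with the remaining $z_i$ frozen. Your route is more elementary and self-contained, avoiding any appeal to results about the analytic topology on scheme points; the paper's route is shorter and has the advantage of applying verbatim in the later conic-bundle setting (Section~6), where the fibre is a conic rather than a norm equation and no convenient Euler identity is available.
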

\begin{proof} 
We note that $U_\Q$ is smooth, so every $\Q_p$-point of $U_\Q$
has an open neighbourhood $\mathcal U$ homeomorphic to an open $p$-adic ball.
Since $f\colon U_\Q\to V_\Q$, $V_\Q\to \AA^1_\Q$ and $V_\Q\to \AA^{d+1}_\Q$ are smooth morphisms, 
$g$ and $h$ are also smooth. This implies that
the maps of topological spaces $g\colon U(\Q_p)\to \Q_p$ 
and $h\colon U(\Q_p)\to (\Q_p)^{d+1}$ are open, cf. \cite[p.~80]{Conrad}.
Thus there exist open $p$-adic balls $\mathcal U_1\subset \Q_p$ with centre $g(N)$ and
$\mathcal U_2\subset (\Q_p)^{d+1}$ with centre $h(N)$
such that $\mathcal U_1\times\mathcal U_2\subset f(\mathcal U)$. 
 \end{proof}

\begin{theorem} \label{thm1}
Let $K$ be a cyclic extension of $\Q$ and let $S$ be the set of primes where $K/\Q$ 
is ramified.
Let $\mathcal P$ be the set of $\m\in\Z^{d+1}$ such that $P(t,\m)$
is a Bouniakowsky polynomial. 
Let $\mathcal M$ be the set of $\m\in\mathcal P$ such that $U_\m(\Z_p)\neq\emptyset$
for each $p\in S$. When $\mathcal P$ is ordered by height,  there is a subset $\mathcal M'\subset \mathcal M$ of density $1$ 
such that $U_\m(\Q)\neq\emptyset$ for every $\m\in\mathcal M'$.
The set $\mathcal M'$ has positive density in $\Z^{d+1}$ ordered by height.
\end{theorem}

\begin{remark} (1)
The Bouniakowsky condition at $p\notin S$ implies that $U_\m(\Z_p)\neq\emptyset$. 
Indeed, for $\m\in\mathcal P$
the reduction of $P(t,\m)$ modulo $p$ is a non-zero function $\F_p\to \F_p$.
Hence we can find a $t_p\in\Z_p$ such that $P(t_p,\m)\in\Z_p^*$ and apply Lemma \ref{erti}.
Likewise, the positivity of the leading term of $P(t,\m)$, which is
the `Bouniakowsky condition at infinity', implies that $U_\m$ has real points over large real values of $t$.
Thus in our setting the condition that $U_\m(\Z_p)\neq\emptyset$
for each $p\in S$ implies that $U_\m$ is everywhere locally soluble.

(2) The existence of a subset $\mathcal M'\subset \mathcal M$ of density $1$ 
can be linked to the triviality of the unramified Brauer group of $U_\m$ when $K/\Q$ is cyclic and
$P(t,\m)$ is an irreducible polynomial, as follows from \cite[Cor.~2.6 (c)]{CTHS03},
see also \cite[Prop.~2.2 (b), (d)]{Wei12}.
\end{remark}

\begin{proof} 
Since $\Z_p^*$ is closed in $\Z_p$
and $P(t,\x)$ is a continuous function, $V(\Z_p)$ is closed in $\Z_p^{d+2}$,
hence compact. For the same reason $U(\Z_p)$ is compact,
thus $h(U(\Z_p))$ is compact as a continuous image of a compact set.
Therefore, $\prod_{p\in S}h(U(\Z_p))$ is compact.

Take any $(N_p)\in \prod_{p\in S}U(\Z_p)$. For each $p\in S$
there is a positive integer $M_p$ such that
the $p$-adic ball $\mathcal B_{N_p}\subset \Z_p^{d+1}$ of radius $p^{-M_p}$ around
$h(N_p)$ satisfies the conclusion of Lemma \ref{ori}. Thus
the open sets $\prod_{p\in S}\mathcal B_{N_p}$, where $(N_p)\in \prod_{p\in S}U(\Z_p)$, 
cover $\prod_{p\in S}h(U(\Z_p))$. By compactness, there exist
finitely many points $(N_p^{(i)})\in \prod_{p\in S}U(\Z_p)$, $i=1,\ldots, n$,
such that the corresponding open sets $\prod_{p\in S}\mathcal B_{N_p^{(i)}}$
cover $\prod_{p\in S}h(U(\Z_p))$.

It follows that $\mathcal M=\cup_{i=1}^n \mathcal M_i$, where 
$\mathcal M_i=\mathcal M\cap \prod_{p\in S}\mathcal B_{N_p^{(i)}}$ for all $i$. 
Thus it is enough to prove that for 100\%
of $\m\in\mathcal M_i$ we have $U_\m(\Q)\neq\emptyset$.

In the rest of proof we write $\mathcal M=\mathcal M_i$ and $N_p=N_p^{(i)}$, where $p\in S$.
Write $n_p=g(N_p)$ and $\m_p=h(N_p)$, where $p\in S$. 
Note that $P(n_p,\m_p)\in\Z_p^*$ for each $p\in S$.
Write $M=\prod_{p\in S} p^{M_p}$. By the Chinese remainder theorem
we can find $n_0\in\Z$ and $\m_0\in\Z^{d+1}$ such that $n_0\equiv n_p\bmod {p^{M_p}}$ and 
$\m_0\equiv \m_p\bmod {p^{M_p}}$ for each $p\in S$. Our new set 
$\mathcal M$ consists of all $\m\in\mathcal P$
such that $\m\equiv \m_0\bmod M$.
Since $P(n_p,\m_p)\in\Z_p^*$ for each $p\in S$, we obtain that $P(n_0,\m_0)$ is coprime to $M$. 

Thus we can apply Theorem \ref{cor:almostal} to our $n_0$, $M$, with $Q(t)=P(t,\m_0)$.
It gives that for 100\% of $\m\in\mathcal M$, ordered by height,
one can choose $\nu\equiv n_0\bmod M$
such that $P(\nu,\m)$ is a prime. Call this prime $q$. 

We claim that $q={\rm N}_{K/\Q}(\xi)$ for some $\xi\in K^*$, so that $U_{\nu,\m}(\Q)\neq\emptyset$. 
Since $K$ is a cyclic extension of $\Q$, it is enough to show
that for all places $v$ of $\Q$, except possibly the place corresponding to the prime $q$, 
we have $U_{\nu,\m}(\Q_v)\neq\emptyset$,
see, e.g., \cite[Cor.~13.1.10]{CTS21} and references there. 
Indeed, the prime $q$ is a local norm at $\Q_v=\R$, since  
any positive real number is a norm for any finite extension. Next,
$q$ is a local norm at $\Q_p$ for $p\in S$, by the definition of $\mathcal M$ and Lemma \ref{ori}. 
Finally, $q$ is a local norm at $\Q_p$ for $p\notin S$, $p\neq q$, 
since $q\in\Z_p^*$ implies $(\nu,\m)\in V(\Z_p)$, so we can apply Lemma \ref{erti}.

Proving that $\mathcal M'$ has positive density in $\Z^{d+1}$
is equivalent to proving the same for $\mathcal M$. 
We have $\mathcal M=\cup_{i=1}^n \mathcal M_i$, where each $\mathcal M_i$ consists
of all Bouniakowsky polynomials $P(t)$ of degree $d$ satisfying 
$P(t)\equiv Q(t)\bmod M$ with $(Q(n_0),M)=1$.
Corollary~\ref{densitySch01234} implies that any such set has positive density.
Similarly, any non-empty intersection of some of the sets $\mathcal M_i$ also 
has positive density. By inclusion-exclusion $\mathcal M$ has 
positive density in $\Z^{d+1}$.
\end{proof}

\begin{remark} It is not clear to us if $U_{\nu,\m}(\Z)\neq \emptyset$.
\end{remark}

\begin{example} Let $K=\Q(\sqrt{-1})$. Then $S=\{2\}$.
Fix a positive integer $m\geq 2$. Let $s=|(\Z/2^m)^*|=2^{m-1}$. Consider
$$P(t)=3+(2^m-3)t^s + 2^{m+2}Q(t), \quad\text{where}\quad Q(t)\in\Z[t].$$
If $n\in\Z$ is even, then $P(n)\equiv 3 \bmod 4$ 
so $P(n)$ is not a sum of two squares in $\Q_2$.
If $n$ is odd, then $n^s \equiv 1 \bmod{2^m}$, hence
$P(n)$  is divisible by $2^m$.
Since $P(1)=2^m(1+4k)$ is a sum of two squares in $\Z_2$,
our equation $x^2+y^2=P(t)$ is solvable in $\Z_2$,
but for {\em any} $2$-adic solution the $2$-adic valuation of 
the right hand side is divisible by $2^m$.
This example shows that the set of $\m \in \Z^{d+1}$ such that
$U_\m(\Z_2)=\emptyset$ while $U_\m(\Q_2)\neq\emptyset$ has positive density. 
\end{example}

\medskip 

Let us now give a simpler version of Theorem \ref{thm1} applicable to some non-cyclic abelian
extensions $K/\Q$. 
Let $K^{(1)}$ be the Hilbert class field of $K$ and let
$K^{(+)}$ be the {\em extended Hilbert class field} of $K$, see \cite[p.~241]{J}
(it is also called the strict Hilbert class field \cite[Def. 15.32]{C}). 
By definition, $K^{(+)}$ is the
ray class field whose modulus is the union of all real places of $K$. 
Thus $K^{(+)}$ is a maximal abelian extension of $K$ unramified at all the {\em finite}
places of $K$, so that $K^{(1)}\subset K^{(+)}$.
By class field theory a prime $\mathfrak p$ of $K$ splits in $K^{(+)}$ if and only if
$\mathfrak p=(x)$ is a principal prime ideal with a totally positive generator $x\in K$.

\begin{theorem} \label{easy}
Let $d$ be a positive integer. Let $K$ be a finite abelian extension of $\Q$
such that $K^{(+)}$ is abelian over $\Q$.
Then for a positive proportion of polynomials $P(t)\in\Z[t]$ of degree $d$ 
ordered by height
the equation (\ref{norm-eq}) is soluble in $\Z$.
\end{theorem}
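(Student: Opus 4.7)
The plan is to apply Theorem~\ref{cor:almostal} to produce primes in a suitable arithmetic progression that split completely in $K^{(+)}$, and then to convert such primes into norms via class field theory. Since $K^{(+)}/\Q$ is abelian, the Kronecker--Weber theorem provides an integer $M\geq 1$ with $K^{(+)}\subset\Q(\zeta_M)$. Let $\cH\subset(\Z/M)^*$ be the subgroup corresponding to $K^{(+)}$ under the identification $\Gal(\Q(\zeta_M)/\Q)\cong(\Z/M)^*$, so that a rational prime $p\nmid M$ splits completely in $K^{(+)}$ if and only if its residue class modulo $M$ belongs to $\cH$. Fix any integer $n_0$ representing an element of $\cH$; in particular $\gcd(n_0,M)=1$.

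I would next apply Theorem~\ref{cor:almostal} with $n=1$, $d_1=d$, the pair $(n_0,M)$ above, and the constant polynomial $Q_1(t)=n_0$, whose value at $n_0$ is trivially coprime to $M$. This shows that for $100\%$ of Bouniakowsky polynomials $P$ of degree $d$ with $P\equiv n_0\bmod M$, ordered by height, there is a positive integer $m\equiv n_0\bmod M$ with $P(m)$ prime. By Proposition~\ref{prop:betagammadelta} with $n=1$ and our $M$, the number of degree $d$ polynomials of height at most $H$ satisfying $P>0$, $P\equiv n_0\bmod M$ and the Bouniakowsky condition is asymptotically a positive constant times $H^{d+1}$, hence a positive proportion of all degree $d$ polynomials of height at most $H$. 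Consequently the subset for which moreover such an $m$ exists has positive density too.

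For each such $P$, set $p:=P(m)$: this is a prime with $p\equiv n_0\bmod M$, so $p$ splits completely in $K^{(+)}$ and a fortiori in $K$. Choose a prime $\mathfrak p$ of $\mathcal O_K$ above $p$. Complete splitting of $p$ in $K^{(+)}/K$ means that $\mathfrak p$ is trivial in the strict ideal class group of $K$, so $\mathfrak p=(x)$ for some totally positive $x\in\mathcal O_K$. Then ${\rm N}_{K/\Q}(x)>0$, because real embeddings contribute positive numbers by total positivity while pairs of complex embeddings contribute $|\sigma(x)|^2>0$. Combined with $|{\rm N}_{K/\Q}(x)|={\rm N}(\mathfrak p)=p$ this forces ${\rm N}_{K/\Q}(x)=p=P(m)$, and expanding $x=z_1\omega_1+\ldots+z_r\omega_r$ with $z_i\in\Z$ produces the required integer solution of $(\ref{norm-eq})$ with $t=m$. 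The only substantive ingredient is Theorem~\ref{cor:almostal}; the use of $K^{(+)}$ rather than the ordinary Hilbert class field $K^{(1)}$ is precisely what allows us to detect the correct sign of the norm via a single congruence modulo $M$.
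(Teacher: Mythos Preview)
Your proof is correct and follows essentially the same route as the paper's: Kronecker--Weber for $K^{(+)}$ to pick a modulus $M$, Theorem~\ref{cor:almostal} together with Proposition~\ref{prop:betagammadelta} to produce primes in the right residue class for a positive proportion of polynomials, and the narrow Hilbert class field interpretation of $K^{(+)}$ to write such a prime as a norm of a totally positive integer. The only cosmetic difference is that the paper fixes the simplest choice $Q(t)=1$ and $n_0=0$ (so the prime values satisfy $p\equiv 1\bmod M$), whereas you allow an arbitrary representative $n_0\in\cH$; this changes nothing of substance.
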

\begin{proof}
Since $K^{(+)}$ is abelian over $\Q$, by the Kronecker--Weber theorem
there is a positive integer $M$ such that $K^{(+)}\subset\Q(\zeta_M)$. 
Thus if a prime number $p$ is $1\bmod M$ then $p$ splits in $K^{(+)}$.
This implies that $p$ splits in $K$ so that every prime $\mathfrak p$ of $K$
over $p$ has norm $p$; moreover, $\mathfrak p$
splits in $K^{(+)}$ and so $\mathfrak p=(x)$ where $x\in \mathcal O_K$ is totally positive.
Then the ideal $(p)\subset\Z$ is the norm of the ideal $(x)\subset\mathcal O_K$,
hence $(p)=({\rm N}_{K/\Q}(x))$. Since $x$ is totally positive, we have 
${\rm N}_{K/\Q}(x)>0$, so $p={\rm N}_{K/\Q}(x)$.

A positive proportion of polynomials of degree $d$ are Bouniakowsky polynomials, and
a positive proportion of these are congruent to the constant polynomial $Q(t)=1$ modulo $M$,
by Proposition \ref{prop:betagammadelta}.
Taking $n_0=0$ in
Theorem \ref{cor:almostal} we see that for 100 \% of such polynomials $P(t)$
there is an integer $m$ such that $P(m)$ is a prime number
$p\equiv 1\bmod M$. Then $p={\rm N}_{K/\Q}(x)$ for some $x\in\mathcal O_K$.
\end{proof}

If $K$ is a totally imaginary abelian extension of $\Q$ of class number 1, then
$K=K^{(1)}=K^{(+)}$ so that Theorem \ref{easy} can be applied. For example,
this holds for $K=\Q(\sqrt{-1},\sqrt{2})$, which is one of 47 biquadratic extensions of $\Q$
with class number 1, see \cite{BP}. 
If $K$ is an imaginary quadratic field, then $K^{(1)}$ is abelian over $\Q$ 
if and only if the class group of $K$ is an elementary 2-group \cite[Cor. VI.3.4]{J}.

\subsection{Reducible polynomials}

Let $d_1,\ldots,d_n$ be positive integers. In this section we
let $U$ be the affine $\Z$-scheme given by 
\begin{equation}
\prod_{i=1}^n P_i(t,\b x_i)={\rm N}_{K/\Q}(\z)\neq 0, \label{red}
\end{equation}
where $\b x_i=(x_{i,0},\ldots,x_{i,d_i})$ and
$$P_i(t,\b x_i)=x_{i,d_i}t^{d_i}+x_{i,d_i-1}t^{d_i-1}+\ldots+x_{i,1}t+x_{i,0}, \quad 
\quad i=1, \ldots,n.$$
Write $d=d_1+\ldots+d_n$ and $\b x=(\b x_1,\ldots,\b x_n)$. 
Consider the affine space $\AA^{d+n+1}_\Z$ with coordinates
$t$ and $x_{ij}$ for all pairs $(i,j)$, where $1\leq i\leq n$ and $0\leq j\leq d_i$.
Define $V$ as the open subscheme of $\AA^{d+n+1}_\Z$
given by $\prod_{i=1}^n P_i(t,\b x_i)\neq 0$.
The morphism $f:U\to V$ is the product of the morphism $g$ (the projection to $t$)
and the morphisms $h_i$ (the projection to $\b x_i$), for $i=1,\ldots,n$.

\begin{theorem} \label{thm2}
Let $K$ be a cyclic extension of $\Q$ of degree $r=[K:\Q]$ with character
$$\chi:{\rm Gal}(\overline\Q/\Q)\longrightarrow\Z/r.$$
Let $S$ be the set of primes where $K/\Q$ ramifies.
Let $\mathcal P$ be the set of $\m=(\m_1,\ldots,\m_n)\in\Z^{d+n}$ such that 
$P_1(t,\m_1),\ldots,P_n(t,\m_n)$ is a Schinzel $n$-tuple.
Let $\mathcal M\subset\mathcal P$ be the subset whose elements $\m$
satisfy the following condition:

\smallskip

\noindent {\rm for each $p\in S$ there is a point $(t_p,\z_p)\in U_\m(\Z_p)$ 
such that for each $i=1,\ldots, n$ we have 
\begin{equation}\sum_{p\in S}{\rm inv}_p(\chi,P_i(t_p,\m_i))=0.\label{BM}\end{equation}}

\noindent Then there is a subset $\mathcal M'\subset \mathcal M$ of density $1$ 
such that $U_\m(\Q)\neq\emptyset$ for every $\m\in\mathcal M'$.
The set $\mathcal M'$ has positive density in $\Z^{d+n}$ ordered by height.
\end{theorem}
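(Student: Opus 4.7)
My plan is to generalise the proof of Theorem \ref{thm1} in a direct way, the new feature being that we must arrange the product $\prod_{i=1}^n P_i(m,\m_i)$ to be a global norm from $K$, rather than a single prime. The cleanest route is to show that for a suitable $m$ each individual factor $q_i := P_i(m,\m_i)$ is a prime which is separately a global norm from $K/\Q$, so that the product is automatically a norm.

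The first step is local decomposition. Using the smoothness of $f$, $g$ and $h_i$ (as in Lemma \ref{ori}) together with the compactness of $\prod_{p\in S} h(U(\Z_p))$, I would cover $\mathcal{M}$ by finitely many pieces, each corresponding to a choice of local data $(N_p^{(j)})_{p\in S}\in\prod_{p\in S}U(\Z_p)$ satisfying \eqref{BM}. For $M_p$ sufficiently large and $M:=\prod_{p\in S}p^{M_p}$, each piece is encoded by a congruence $\m\equiv\m_0\pmod M$ together with a prescribed residue $n_0\in\Z$. Lemma \ref{ori} then guarantees that for every $(m,\m)$ with $m\equiv n_0\pmod M$ and $\m\equiv\m_0\pmod M$ the fibre $U_{m,\m}$ has a $\Q_p$-point for every $p\in S$. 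Enlarging each $M_p$ further, using the continuity of the cyclic algebra map $(\chi,\cdot)\colon\Q_p^{*}\to\mathrm{Br}(\Q_p)$ and the fact that $\Q_p^{*r}$ is open in $\Q_p^{*}$, we may also ensure that $P_i(m,\m_i)$ lies in the same $\Q_p^{*r}$-coset as $P_i(g(N_p^{(j)}),h_i(N_p^{(j)}))$ for each $p\in S$ and each $i$, so that $\mathrm{inv}_p(\chi,P_i(m,\m_i))=\mathrm{inv}_p(\chi,P_i(g(N_p^{(j)}),h_i(N_p^{(j)})))$.

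Next, I would apply Theorem \ref{cor:almostal} to each piece. With $Q_i(t):=P_i(t,\m_{0,i})$, the coprimality hypothesis $\gcd(M,\prod_i Q_i(n_0))=1$ is automatic because $N_p^{(j)}\in U(\Z_p)$ forces $\prod_i P_i(g(N_p^{(j)}),h_i(N_p^{(j)}))\in\Z_p^{*}$ for each $p\in S$, and by continuity the same holds after replacing the local data by the congruent $(n_0,\m_0)$. The theorem then yields, for $100\%$ of $\m$ in the piece, a natural number $m\equiv n_0\pmod M$ such that each $q_i:=P_i(m,\m_i)$ is prime; taking the union over the finitely many $j$ gives the density-one subset $\mathcal{M}'\subset\mathcal{M}$. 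For such $(m,\m)$, solubility of \eqref{red} at each place of $\Q$ is verified exactly as in Theorem \ref{thm1}: at infinity all $q_i>0$ and any positive real is a norm; for $p\in S$, Lemma \ref{ori} supplies a $\Q_p$-point; for $p\notin S$ with $p\neq q_i$ for all $i$, we have $\prod_i q_i\in\Z_p^{*}$ so Lemma \ref{erti} applies. The remaining cases $p=q_i$ are handled by global reciprocity, one $i$ at a time: the Brauer class $(\chi,q_i)\in\mathrm{Br}(\Q)[r]$ is unramified outside $S\cup\{q_i,\infty\}$, has trivial invariant at infinity by positivity, and its invariants at primes of $S$ sum to zero by \eqref{BM} combined with the coset matching of the first step, so the invariant at $q_i$ must vanish as well. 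Thus $(\chi,q_i)=0$ in $\mathrm{Br}(\Q)$ and Albert--Brauer--Hasse--Noether gives $q_i\in\mathrm{N}_{K/\Q}(K^{*})$; multiplying over $i$ shows that $\prod_i q_i$ is a global norm, so $U_\m(\Q)\neq\emptyset$.

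The main obstacle is the local-constancy step in the first paragraph: one must choose $M$ large enough that $\mathrm{inv}_p(\chi,P_i(m,\m_i))$ depends only on $(m,\m)\bmod M$ for every $i$ and every $p\in S$. This is a standard consequence of the continuity of the cyclic algebra symbol once each $M_p$ exceeds the relevant $p$-adic ramification data by a small additive constant, but it is the one place where the multi-polynomial setting demands slightly more care than in Theorem \ref{thm1}. Once this is in place, the reciprocity argument above is essentially that of Theorem \ref{thm1}, run in parallel for each $i=1,\ldots,n$.
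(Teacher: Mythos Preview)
Your overall strategy is the same as the paper's, and the local decomposition, the application of Theorem~\ref{cor:almostal}, and the verification that $\mathrm{inv}_{q_i}(\chi,q_i)=0$ via global reciprocity are all correct. However, the last logical step contains a genuine gap.

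From $\mathrm{inv}_p(\chi,q_i)=0$ for $p\notin S\cup\{q_i,\infty\}$, $\mathrm{inv}_\infty(\chi,q_i)=0$, $\mathrm{inv}_{q_i}(\chi,q_i)=0$, and $\sum_{p\in S}\mathrm{inv}_p(\chi,q_i)=0$, you conclude that $(\chi,q_i)=0$ in $\mathrm{Br}(\Q)$. This does not follow: the sum of the invariants over $S$ being zero does not force each individual $\mathrm{inv}_p(\chi,q_i)$ to vanish when $|S|\geq 2$. In fact the list of conditions you have assembled is exactly the global reciprocity law and imposes no further constraint. So there is no reason for $q_i$ to be a global norm on its own, and your argument breaks here.

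The fix is the one the paper uses: do not try to make each $q_i$ a norm, but show directly that the product $q_1\cdots q_n$ is a global norm. You already have, from Lemma~\ref{ori}, that $U_{m,\m}(\Q_p)\neq\emptyset$ for each $p\in S$; this means precisely that $\prod_i q_i$ is a local norm at $p$, i.e.\ $\sum_i\mathrm{inv}_p(\chi,q_i)=0$ for every $p\in S$ individually. Combine this with $\mathrm{inv}_{q_j}(\chi,\prod_i q_i)=\mathrm{inv}_{q_j}(\chi,q_j)=0$ (the other factors being units at $q_j$, with $K/\Q$ unramified there) and the trivial vanishing elsewhere to get $\mathrm{inv}_v(\chi,\prod_i q_i)=0$ at every place $v$. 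Then Albert--Brauer--Hasse--Noether gives $\prod_i q_i\in\mathrm{N}_{K/\Q}(K^*)$ and hence $U_\m(\Q)\neq\emptyset$. Thus the key extra ingredient, which you stated but did not exploit, is the ``column'' identity $\sum_i\mathrm{inv}_p(\chi,q_i)=0$ for each $p\in S$, complementing the ``row'' identity $\sum_{p\in S}\mathrm{inv}_p(\chi,q_i)=0$ coming from~\eqref{BM}.
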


Let us explain the notation used in this statement. For a place $v$ of $\Q$ and $a\in\Q_v^*$
we denote by $(\chi,a_v)$ the element of the Brauer group
${\rm Br}(\Q_v)$ which is the class of the  
cyclic algebra over $\Q_v$ of degree $r$ defined by $\chi$ and $a_v$, see \cite[\S 1.3.4]{CTS21}.
We have $(\chi,a_v)=0$ if and only if $a_v$ is a local norm for the extension $K/\Q$.
The local invariant ${\rm inv}_v$ is an injective homomorphism
$${\rm inv}_v\colon {\rm Br}(\Q_v)\to\Q/\Z,$$
which is surjective if $v$ is a finite place, and has image $\frac{1}{2}\Z/\Z$ if $\Q_v=\R$. The sum of maps ${\rm inv}_v$ for all places $v$ of $\Q$ 
fits into the exact sequence
\begin{equation}
0\lra{\rm Br}(\Q)\lra \oplus_v{\rm Br}(\Q_v)\lra\Q/\Z\lra 0, \label{glob}
\end{equation}
where each map ${\rm Br}(\Q)\to {\rm Br}(\Q_v)$ is the natural restriction, 
see \cite[\S 13.1.2]{CTS21}.

\begin{remark} (1) For $n=1$ condition (\ref{BM})
is automatically satisfied, so we recover Theorem \ref{thm1} as a particular case
of Theorem \ref{thm2}.

(2) Since each $P_i(t,\m_i)$ is a Bouniakowsky polynomial, for each $p\notin S$ we can find
a $t_p\in\Z_p$ such that $P_i(t_p,\m_i)\in\Z_p^*$ and hence ${\rm inv}_p(\chi,P_i(t_p,\m_i))=0$.
Taking the product over $i=1,\ldots,n$ we see that $U_\m$ has a $\Z_p$-point over $t_p$.
Similarly, each $P_i(t,\m_i)$ takes positive values when $t_0\in \R$ is large,
so ${\rm inv}_\R(\chi,P_i(t_0,\m_i))=0$. Thus $U_\m$ has a real point over $t_0$.
Thus (\ref{BM}) implies that $U_\m$ has $\Z_p$-points $(t_p,\z_p)$ for all $p$ and a real point $(t_0,\z_0)$
such that $$\sum{\rm inv}_p(\chi,P_i(t_p,\m_i))=0$$ for $i=1,\ldots,n$, where the sum is over
all places of $\Q$.
Since $K/\Q$ is cyclic, from \cite[Cor. 2.6 (c)]{CTHS03} we know that the unramified Brauer
group of $U_\m$ is contained in the subgroup of ${\rm Br}(\Q(U_\m))$
generated by ${\rm Br}(\Q)$ and the classes $(\chi,P_i(t,\m_i))$, for $i=1,\ldots, n$. 
We conclude that for any smooth and proper model $X$ of $U_\m$, the Brauer group ${\rm Br}(X)$
does not obstruct the Hasse principle on $X$.
\end{remark}

\begin{proof} We follow the proof of Theorem \ref{thm1} with necessary
adjustments. The analogue of Lemma \ref{ori} says that
for $p\in S$ and $N_p\in U(\Q_p)$ there is a positive integer $M_p$ such that if $\nu\in\Q_p$ and 
$\m\in(\Q_p)^{d+n}$ satisfy
\begin{equation}
\max\big( |\nu-g(N_p)|_p, |\m_{i}-h_i(N_p)|_p\big)\leq p^{-M_p}, \ \text{for} \ i=1,\ldots,n,
\label{openset}
\end{equation}
then ${\rm inv}_p(\chi,P_i(\nu,\m_{i}))$ is constant and equal to 
${\rm inv}_p(\chi,P_i(g(N_p),h_i(N_p)))$.
This implies
\begin{equation}
{\rm inv}_p(\chi,\prod_{i=1}^nP_i(\nu,\m_{i}))=\sum_{i=1}^n{\rm inv}_p(\chi,P_i(\nu,\m_{i}))
={\rm inv}_p(\chi, \prod_{i=1}^nP_i(g(N_p),h_i(N_p)))=0, \label{midnight}
\end{equation}
in particular, $U_{\nu,\m}(\Q_p)\neq\emptyset$.

Let $Z\subset \prod_{p\in S}U(\Z_p)$ 
be the subset consisting of the points $(N_p)$ subject to the condition
\begin{equation}
\sum_{p\in S}{\rm inv}_p(\chi,P_i(g(N_p),h_i(N_p)))=0, \ \text{for} \ i=1,\ldots,n. \label{cond}
\end{equation}
The left hand side of (\ref{cond}), for a fixed $i$, takes values in $\Z/r$ and each level set is open,
hence also closed. We know that $\prod_{p\in S}U(\Z_p)$ is compact, hence $Z$ is compact.
Thus $f(Z)$ is compact, so $f(Z)$ can be covered by finitely many open subsets
given by congruence conditions on $\nu$ and $\m$ as in (\ref{openset}) such that (\ref{cond}) holds.

The condition (\ref{BM}) in the theorem implies that $\mathcal M\subset h(Z)$.
 As a consequence, using the Chinese remainder theorem, we represent 
$\mathcal M$ as a finite union of subsets $\mathcal M_j$, 
each of which consists of all Schinzel $n$-tuples satisfying a congruence condition of the form
$\m\equiv \m_0\bmod M$, where $\m_0\in\Z^{d+n}$ and $M=\prod_{p\in S}p^{M_p}$.
Moreover, there exists an
$n_0\in\Z$ with $(\prod_{i=1}^nP_i(n_0,\m_{0,i}),M)=1$ such that the following holds:
if $\nu\equiv n_0\bmod M$, then for all $\m\in\mathcal M_j$ we have
\begin{equation}
\sum_{p\in S}{\rm inv}_p(\chi,P_i(\nu,\m_i))=0, \ \text{for} \ i=1,\ldots,n,
\label{night}
\end{equation}
and
\begin{equation}
\sum_{i=1}^n{\rm inv}_p(\chi,P_i(\nu,\m_i))=0, \ \text{for} \ p\in S, \label{mrak}
\end{equation}
which follow from (\ref{cond}) and (\ref{midnight}), respectively.
It is enough to prove that for 100\%
of $\m\in\mathcal M_j$ we have $U_\m(\Q)\neq\emptyset$.

We apply Theorem \ref{cor:almostal} to our $n_0$ and $M$, with $Q_i(t)=P_i(t,\m_{0,i})$.
It gives that for 100\% of $\m$ there is an integer
$\nu\equiv n_0\bmod M$ such that each $q_i=P_i(\nu,\m_i)$ is a prime.
We have 
\beq{eq:BWV 853 Stokowski}{{\rm inv}_p(\chi, q_i)={\rm inv}_p(\chi,P_i(\nu,\m_i))=0}
for every prime $p\notin S\cup\{q_i\}$ and also for the real place. The real condition trivially
holds since $q_i>0$.  A prime $p\notin S\cup\{q_i\}$
does not divide $q_i$ and is unramified in $K$, so the condition holds for such $p$.
Therefore, by global reciprocity we have
\begin{equation}
{\rm inv}_{q_i}(\chi, q_i)=-\sum_{p\neq q_i}{\rm inv}_p(\chi, q_i)=
-\sum_{p\in S}{\rm inv}_p(\chi, q_i)=0, \ \text{for} \ i=1,\ldots,n, \label{long}
\end{equation}
where the last equality follows from (\ref{night}).
We claim that $${\rm inv}_p(\chi,q_1\ldots q_n)=0$$
for every prime $p$ (and also for the real place). 
This is clear for $p\notin S\cup\{q_1,\ldots, q_n\}$ and for the real place, but this is also clear
for $p=q_i$ by~\eqref{long} and \eqref{eq:BWV 853 Stokowski}.
Using (\ref{mrak}) we obtain the vanishing for $p\in S$, thus proving the claim.

The class $(\chi,q_1\ldots q_n)\in{\rm Br}(\Q)[r]$
has all local invariants equal to 0, so it is zero due to the exactness of (\ref{glob}).
Thus $\prod_{i=1}^nP(\nu,\m_i)=q_1\ldots q_n$ is a global norm for the extension $K/\Q$, so $U_{\nu,\m}(\Q)\neq\emptyset$. 

The last statement of the theorem is proved in the same way as the last statement
of Theorem \ref{thm1}, using Proposition \ref{prop:betagammadelta}.
 \end{proof}

\section{Random conic bundles} 
\label{rat2}
The classification of Enriques--Manin--Iskovskikh~\cite[Thm.~1]{isk} states that smooth 
projective geometrically rational surfaces over a field, up to birational equivalence, 
fall into finitely many exceptional families (del Pezzo surfaces of degree $1\leq d \leq 9 $) and 
infinitely many families of conic bundles $X\to \P^1$.
The generic fibre of a conic bundle over $\Q$ is a projective conic over the field $\Q(t)$ which can be 
described as the zero set of a diagonal quadratic form of rank 3.
We consider the equation
\beq
{eq:defisko}
{\hspace{-0,1cm}
a_1 \prod_{j=1}^{n_1}P_{1,j}(t)\,
x^2+a_2 \prod_{k=1}^{n_2}P_{2,k}(t)\,y^2
+a_3 \prod_{l=1}^{n_3}P_{3,l}(t)\,z^2=0,} 
where $a_1, a_2, a_3$ are fixed non-zero integers and $ P_{ij}\in \Z[t]$
is a polynomial of fixed degree $d_{ij}$, for $i=1,2,3$ and $j=1,\ldots,n_i$, where $n_1>0$, $n_2>0$ and 
$n_3\geq 0$. 
Let $d=\sum_{i,j}d_{ij}$. 
We write $P_{ij}(t,\m_{ij})$ for the polynomial of degree $d_{ij}$ with coefficients $\m_{ij}\in\Z^{d_{ij}+1}$, and write $\m=(\m_{ij})\in\Z^{d+n}$.
Let $U_\m\subset \P^2_\Z\times\A^1_\Z$ be the scheme given by equation (\ref{eq:defisko}) 
together with the condition $\prod_{i,j} P_{ij}(t,\m_{ij})\neq 0$. 
The proof of the following theorem is given in~\S\ref{proof_thm3}.

\begin{theorem} \label{thm3}
Let $n_1, n_2, n_3 $ be integers
such that $n_1>0$, $n_2>0$, and $n_3\geq 0$, and let $n=n_1+n_2+n_3$. Let 
$a_1, a_2, a_3$ be non-zero integers not all of the same sign and 
such that $a_1a_2a_3$ is square-free. Let $S$ be the set of prime factors of $2a_1a_2a_3$.
Let $d_{ij}$ be natural numbers, for $i=1,2,3$ and $j=1,\ldots,n_i$, and let $d=\sum_{i,j}d_{ij}$. 
Let $\mathcal P$ be the set of $\m=(\m_{ij})\in\Z^{d+n}$ such that the $n$-tuple
$(P_{ij}(t,\m_{ij}))$ is Schinzel.
Let $\mathcal M$ be the set of $\m\in\mathcal P$ such that $U_\m(\Z_p)\neq\emptyset$
for each $p\in S$. Then there is a subset $\mathcal M'\subset \mathcal M$ of density $1$ 
such that $U_\m(\Q)\neq\emptyset$ for every $\m\in\mathcal M'$.
The set $\mathcal M'$ has positive density in $\Z^{d+n}$ ordered by height.
\end{theorem}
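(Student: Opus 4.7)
The strategy adapts the method used for the generalised Ch\^atelet equations (Theorems~\ref{thm1}--\ref{thm2}) to the ternary conic bundle setting, with an additional analytic ingredient to handle quadratic residues at the primes produced by the Schinzel condition.

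First, I would encode $\Z_p$-solubility at $p \in S$ as congruence conditions on $\m$. Exactly as in the proof of Theorem~\ref{thm1}, compactness of $\prod_{p \in S} h(U(\Z_p))$ together with openness of the morphism $f \colon U \to V$ (a consequence of smoothness of $U_\Q \to V_\Q$) allows one to cover $\mathcal M$ by finitely many pieces of the form $\{\m \equiv \m_0 \pmod{M}\}$, on each of which a coherent choice of local solutions $(t_p,x_p,y_p,z_p)_{p \in S}$ is fixed; here $M$ is a product of sufficiently high prime powers supported on $S$. Choose $n_0 \in \Z$ with $n_0 \equiv t_p \pmod{p^{M_p}}$ for every $p \in S$ via the Chinese remainder theorem. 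It suffices to prove the density statement on one such piece.

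Second, I would produce prime values of the $P_{ij}$ satisfying prescribed quadratic residue relations. The specialisation at $t = n$ is the ternary form
\[
A_1 x^2 + A_2 y^2 + A_3 z^2 = 0, \qquad A_i = a_i \prod_{j=1}^{n_i} q_{ij},
\]
where $q_{ij} := P_{ij}(n,\m_{ij})$. At each prime $p = q_{ij}$ the Hilbert symbol of this form reduces to a Legendre symbol in the remaining $q_{k\ell}$ and the $a_k$, and vanishing of all these symbols is exactly what the Hasse principle requires at such primes. Since Theorem~\ref{cor:almostal} is insensitive to Legendre symbol conditions, I would prove a strengthening of it in which the prime-counting function $\theta_{\b P}(x)$ is twisted by products of real quadratic characters attached to the $q_{ij}$; the second-moment analysis of Section~\ref{e:estimatingthevar} is then carried out with the extra input of Heath-Brown's character sum estimate to preserve the $x/\sqrt{\log x}$ cancellation after summation over the twists. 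The output is that, for $100\%$ of $\m$ in the congruence class of Step~1, there exists an integer $n \equiv n_0 \pmod{M}$ with all $q_{ij}$ prime and with the prescribed Legendre symbol conditions satisfied.

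Third, I would apply Hasse--Minkowski to the specialised conic. Real solubility follows from the sign hypothesis on $(a_1,a_2,a_3)$ together with positivity of the leading coefficients of Schinzel polynomials, applied at the large integer $n$. Solubility at $p \in S$ is inherited from $(x_p,y_p,z_p)$ via the congruence class of Step~1 and openness of $f$. At primes $p \notin S \cup \{q_{ij}\}$ the form has odd unit coefficients modulo $p$ and so has a non-zero $\F_p$-point by Chevalley--Warning, which lifts by Hensel's lemma. At $p = q_{ij}$ vanishing of the Hilbert symbol is precisely what Step~2 guarantees. The specialised conic therefore has a $\Q$-point contained in a smooth fibre of $U_\m \to \AA^1_\Q$, and so $U_\m(\Q) \ne \emptyset$. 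The main obstacle is Step~2: the Legendre-symbol constraints must be simultaneously (a) compatible with global quadratic reciprocity and with the local data at $S$, and (b) enforceable on a density-one set of Schinzel tuples. Part (a) reduces to a finite compatibility check that can be absorbed into the congruence modulus $M$ of Step~1 after refinement, whereas part (b) is exactly where Heath-Brown's bound for character sums is indispensable, providing the square-root cancellation needed to insert the quadratic twists into the dispersion argument.
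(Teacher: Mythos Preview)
Your plan is correct and follows the paper's approach: the compactness reduction to finitely many congruence classes is exactly \S\ref{proof_thm3}, and the use of Heath-Brown's character-sum estimate to handle the Legendre-symbol constraints is the content of Proposition~\ref{prop:rhbsums} and Proposition~\ref{sunset}. One small clarification on Step~2: the paper does not insert the quadratic twists into the dispersion argument of Section~\ref{e:estimatingthevar}; rather it expands the solubility indicator of the specialised conic (Lemma~\ref{lem:chatel34}) as $2^{1-n}\theta_{\b P}(x)$ plus character-twisted sums $T_{\b S,\b P}(x)$, bounds each twisted sum by $O(x^{3/4})$ on average via a separate and much simpler second moment that feeds Heath-Brown's bound directly into the off-diagonal terms, and then combines this with the untwisted lower bound on $\theta_{\b P}(x)$ already supplied by Theorem~\ref{thm:almostal}.
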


\begin{remark} 
Let $\x=(x_{ij})$, for $i=1,2,3$ and $j=1,\ldots, n_i$, be independent variables.
We expect that for the generic polynomials $(P_{ij}(t,\x_{ij}))$ the unramified Brauer group 
of the conic bundle (\ref{eq:defisko}) over $\Q(\x)$ is reduced to ${\rm Br}(\Q(\x))$. 
This explains the absence of extra conditions like (\ref{BM}) in Theorem \ref{thm3}.
\end{remark}

\subsection{Correlations between prime values of polynomials and quadratic characters}
\label{s:verfsubs}

When $a$ and $b$ are integers such that $b>0$ we write $\l(\frac{a}{b}\r)$
for the Legendre--Jacobi quadratic symbol. We allow $b$ to be even, so that
$\l(\frac{a}{2}\r)$ is 0 or 1 when $a$ is even and odd, respectively.

A new analytic input in this section is the following result of Heath-Brown.

\begin{lemma}[Heath-Brown]
\label
{lem:largequa}
Let $(a_k)_{k\in \N}$ and $(b_l)_{l \in \N}$ be sequences of complex numbers
such that $a_k=0$ for $k> K$ and $b_l =0$ for $l>L$.
Then for any $\epsilon>0$ 
we have 
\[
 \sum_{\substack{\text{\rm primes } k, l }}
a_k
b_l
  \l(\frac{k}{l}\r)
\ll_\epsilon
\max\{|a_k|\}
\max\{|b_l|\}
\l((KL)^{1+\epsilon}
\l(\min \{K, L\} \r)^{-1/2}
+ K\r)
,\]
where the implied constant depends only on $\epsilon$.
\end{lemma}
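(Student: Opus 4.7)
The asserted inequality is a direct corollary of Heath-Brown's quadratic large sieve, which for any complex coefficients $(\alpha_m)$ and any $M, N \geq 1$ states that
\[
\sum_{\substack{n \leq N \\ n \text{ odd squarefree}}} \left|\sum_{\substack{m \leq M \\ m \text{ odd squarefree}}} \alpha_m \left(\frac{m}{n}\right)\right|^2 \ll_\epsilon (MN)^\epsilon (M+N) \sum_{m} |\alpha_m|^2.
\]
The plan is to peel off the contributions of small or coincident primes and then combine this mean-square estimate with a single application of Cauchy--Schwarz.

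First, separate the contribution of the prime $l=2$ (and, if desired, $k=2$). Since the Kronecker symbol $\left(\frac{k}{2}\right)$ is bounded by $1$ in absolute value, these terms contribute at most
\[
\max_k |a_k| \cdot \max_l |b_l| \cdot \#\{ k \leq K : k \text{ prime}\} \ll \max|a| \max|b| \cdot K,
\]
which accounts for the $+K$ term in the stated bound. The diagonal $k=l$ contributes nothing since $(k/l)=0$ in that case. After these reductions both $k$ and $l$ range over odd primes, which are in particular squarefree, so the quadratic large sieve applies without further preparation.

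Second, apply Cauchy--Schwarz by opening up in the variable $l$:
\[
\left|\sum_{k,l} a_k b_l \left(\frac{k}{l}\right)\right|^2 \leq \left(\sum_{l \leq L} |b_l|^2\right)\sum_{l \leq L}\left|\sum_{k \leq K} a_k \left(\frac{k}{l}\right)\right|^2.
\]
Bound the first factor by $L \max|b|^2$ and invoke the large sieve on the second factor to get
\[
\left|\sum_{k,l} a_k b_l \left(\frac{k}{l}\right)\right|^2 \ll_\epsilon (KL)^\epsilon (K+L) \cdot KL \cdot \max|a|^2 \max|b|^2.
\]
Since $K+L \ll \max\{K, L\}$ and $KL \cdot \max\{K, L\} = (KL)^2 / \min\{K, L\}$, taking square roots produces the main term $(KL)^{1+\epsilon}(\min\{K, L\})^{-1/2}\max|a|\max|b|$, matching the stated estimate.

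The main obstacle is the quadratic large sieve itself, whose proof depends on Poisson summation applied after a careful factorisation of the Jacobi symbol into a real primitive character times a simple correction, together with quadratic reciprocity to swap the roles of the two variables. Since the lemma merely invokes Heath-Brown's result, no further difficulty arises in carrying out the Cauchy--Schwarz step above; the only genuine care required is in the bookkeeping of the $l=2$ term to confirm that it is exactly the extra summand $K$ appearing on the right.
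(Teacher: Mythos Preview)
Your argument is correct and follows essentially the same route as the paper: separate the contribution of $l=2$ to produce the $+K$ term, then invoke Heath-Brown's quadratic large sieve on the remaining sum over odd primes. The only cosmetic difference is that the paper cites Heath-Brown's bilinear Corollary~4 directly, whereas you quote the mean-square form and perform the Cauchy--Schwarz step yourself; since Corollary~4 is itself obtained from the mean-square estimate by exactly this Cauchy--Schwarz argument, the two proofs are the same in substance.
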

\begin{proof}
We write the sum as 
\[
\sum_{\substack{k, l \in \N \\ l \text{ odd }}}
\l( a_k \mathds 1_{\text{primes}}(k) \r)
 \l( b_l \mathds 1_{\text{primes}}(l) \r)
  \l(\frac{k}{l}\r)
+ 
\sum_{k \text{ prime }} a_k
b_2
  \l(\frac{k}{2}\r)
.\]
By~\cite[Cor. 4]{MR1347489}
 the first sum 
is 
$\ll 
\max\{|a_k|\}
\max\{|b_l|\}
(KL)^{1+\epsilon}
\l(\min \{K, L\} \r)^{-1/2}$. 
The    second sum is 
 trivially bounded by  
$\max\{|a_k|\}|b_2|K$, which is enough.
\end{proof}

The following definition introduces a class of character sums to which Heath-Brown's estimate
will be applied.

 \begin{definition}
\label{def:sumsrpf}
Let $n\geq 2$. Let $\c F_1, \c F_2, \c G$ be functions
$$\c F_1, \c F_2 : \Z^{n-1} \to \{z \in \mathbb C:|z|\leq 1 \}, \quad 
\c G: \Z^{n-2} \to \{z \in \mathbb C:|z|\leq 1 \} ,$$where 
 $\c G $ is the constant function $1$ when $n=2$.
Let $\b P=(P_i) \in (\Z[t])^n$ be an $n$-tuple such that each $P_i$
has positive leading coefficient.
For any integers $h\neq k$ such that $1\leq h,k\leq n$ and 
any  $n_0 \in \N$, $M \in \N$, we define   
 \[\eta_{\b P  }(x; h, k )
\hspace{-0,1cm}
:=\hspace{-0,5cm}
\sum_{\substack{ m \in \N \cap [1,x]  \\ 
m\equiv n_0 \md{M}
\\
P_i(m ) \text{ prime,}\, i=1,\ldots,n }}
\hspace{-0,7cm}
\l(\prod_{i=1}^n  \log  P_i(m ) \r)
\hspace{-0,1cm}
 \l(\frac{P_h(m)}{P_k(m)}\r)
\hspace{-0,1cm}
\c F_1(P_a(m)_{a\neq k}) 
\c F_2(P_b(m)_{b\neq h})
  \c G (P_c(m)_{\substack{c\neq h\\c \neq k}})
.\]
Here the functions $\c F_1$, $\c F_2$, 
$\c G$ are applied to $P_1(m),\ldots, P_n(m)$, where 
$P_k(m)$ is omitted in $\c F_1$, $P_h(m)$ is omitted in $\c F_2$, and $P_h(m)$ and $P_k(m)$
are omitted in $\c G$.
\end{definition} 

Our work in previous sections shows that 
$\theta_\b P(x)$ is typically of size $x$.
We now prove
 that for $100\%$ of 
 $\b P\in (\Z[t])^n$ 
one has 
$\eta_{\b P}(x;h, k )=O(x^\delta)$ for some constant $\delta<1$.

\begin
{proposition}
\label
{prop:rhbsums}
Let $n, d_1, \ldots, d_n, M$ be positive integers and let
 $\c F_1, \c F_2,  \c G, h, k $ be as in Definition~\ref{def:sumsrpf}.
Let $n_0\in\N$ and $\b Q \in (\Z[t])^n$
be such that $(Q_i(n_0),M)=1$ for all $i=1,\ldots,n$.
Fix $A_1, A_2\in \R $ with $n<A_1<A_2$.
Then for all $H\geq 3  $
 and all $ x$ with  
$
(\log H)^{A_1}
<
x
\leq 
(\log H )^{A_2}
$
we have {\rm
$$
\frac{
1}{
\#{\texttt{Poly}}(H) 
} 
  \sum_{
\substack{ 
\b P \in 
\texttt{Poly}(H) 
  } } 
| \eta_{\b P  }(x; h, k )|
\ll  
x^{\frac{1}{2} +\frac{n}{ 2 A_1}} ,$$} where 
the implied constant depends only on $ d_1, \ldots, d_n, M, n_0, \b Q, A_1, A_2$. 
\end{proposition}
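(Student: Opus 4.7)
The plan is to combine Cauchy--Schwarz with Heath-Brown's large-sieve inequality for quadratic characters (Lemma~\ref{lem:largequa}), in the spirit of the second-moment analysis of Section~\ref{e:estimatingthevar}. First I would apply Cauchy--Schwarz,
\[
\l(\sum_{\b P \in \texttt{Poly}(H)} |\eta_{\b P}(x;h,k)|\r)^2 \leq \#\texttt{Poly}(H) \cdot \sum_{\b P \in \texttt{Poly}(H)} \eta_{\b P}(x;h,k)^2,
\]
so that it suffices to prove $\sum_{\b P}\eta_{\b P}(x;h,k)^2 \ll \#\texttt{Poly}(H)\cdot x^{1 + n/A_1}$. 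Expanding the square and opening the definition of $\eta_{\b P}$ produces a triple sum, with outer pairs $(m_1,m_2)\in[1,x]^2$, $m_i\equiv n_0 \md{M}$, and inner sum over $\b P\in\texttt{Poly}(H)$ involving primality indicators, logarithms, the bounded weights $\c F_1,\c F_2,\c G$ evaluated at $m_1$ and $m_2$, and the Jacobi symbol product $\l(\frac{P_h(m_1)}{P_k(m_1)}\r)\l(\frac{P_h(m_2)}{P_k(m_2)}\r)$.

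The diagonal $m_1=m_2$ is immediate: on primes the Jacobi symbol squares to $1$, the remaining weights are bounded by $1$, and counting the number of $\b P$ such that every $P_i(m)$ is prime (for instance via Lemma~\ref{lem:inidim}) gives a contribution of at most $O(\#\texttt{Poly}(H)\cdot x\cdot(\log H)^n)$. Since $x > (\log H)^{A_1}$ implies $(\log H)^n\leq x^{n/A_1}$, the diagonal is within the target.

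For the off-diagonal $m_1\neq m_2$ I would swap summation and, for each fixed pair, parametrize $P_i$ with $i\in\{h,k\}$ by freezing the top coefficients $c_{i,2},\ldots,c_{i,d_i}$ and allowing $(c_{i,0},c_{i,1})$ to vary. Since $m_1\neq m_2$ makes the underlying $2\times 2$ Vandermonde matrix invertible, this sets up a linear bijection between $(c_{i,0},c_{i,1})\in\Z^2$ and the pair $(P_i(m_1),P_i(m_2))$, which ranges over lattice points in a parallelogram of diameter $O(Hm^{d_i})$. Next I would further freeze the prime values $p_h^{(2)}:=P_h(m_2)$ and $p_k^{(2)}:=P_k(m_2)$; the Jacobi product collapses to $\l(\frac{p_h^{(2)}}{p_k^{(2)}}\r)\l(\frac{P_h(m_1)}{P_k(m_1)}\r)$, and the remaining inner sum is of the bilinear shape $\sum_{p,q\text{ prime}}a_pb_q\l(\frac{p}{q}\r)$ required by Lemma~\ref{lem:largequa}, with coefficients $a_p=\log p\cdot\c F_2(p,\ldots)\mathds 1_{p\in I_h}$ and $b_q=\log q\cdot\c F_1(q,\ldots)\mathds 1_{q\in I_k}$ of modulus $O(\log H)$. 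The factorisation is crucial and relies on the fact that $\c F_1$ does not depend on $P_k(m_1)$, that $\c F_2$ does not depend on $P_h(m_1)$, and that $\c G$ is independent of both. For $i\notin\{h,k\}$ the sum over $P_i$ is bounded using Theorem~\ref{thm:mainresulthalb} and the trivial estimate $|\c F_1\c F_2\c G|\leq 1$, giving a factor $\ll H^{d_i+1}\prod_{p\mid m_1-m_2}p/(p-1)$.

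The main obstacle is the bookkeeping at the Heath--Brown step: the ranges $I_h,I_k$ have sizes $O(Hm^{d_h})$ and $O(Hm^{d_k})$, and quadratic reciprocity introduces $(-1)^{(p-1)(q-1)/4}$-type sign corrections, so one must split the low coefficients into residue classes modulo $4$ (and modulo $8$ at the prime $2$) and apply Lemma~\ref{lem:largequa} on each class. Once the Heath--Brown saving $\min\{K,L\}^{-1/2}\sim(Hm^{d_{\min(h,k)}})^{-1/2}$ is inserted, summing over the frozen top coefficients, the residue classes, the remaining $P_i$ for $i\notin\{h,k\}$, the fixed primes $p_h^{(2)},p_k^{(2)}$, and finally the outer pair $(m_1,m_2)\leq x$, one arrives at $\sum_{\b P}\eta_{\b P}^2\ll\#\texttt{Poly}(H)\cdot x\cdot(\log H)^{O(1)}$. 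Absorbing the logarithmic loss via $(\log H)^{O(1)}\leq x^{O(1)/A_1}$ and taking the square root via Cauchy--Schwarz then yields the claimed bound $\ll x^{1/2+n/(2A_1)}$.
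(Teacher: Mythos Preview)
Your approach is correct and matches the paper's proof: Cauchy--Schwarz to a second moment, trivial diagonal $m_1=m_2$, and for $m_1\neq m_2$ freeze the $m_2$-values and apply Heath-Brown's Lemma~\ref{lem:largequa} to the bilinear sum in $(P_h(m_1),P_k(m_1))$. Two simplifications: the quadratic-reciprocity splitting into residue classes mod $4$ and $8$ is unnecessary, since Lemma~\ref{lem:largequa} already handles $\sum_{p,q\text{ prime}}a_pb_q\l(\frac{p}{q}\r)$ directly with arbitrary bounded coefficients; and for $i\notin\{h,k\}$ you do not need Theorem~\ref{thm:mainresulthalb}---the trivial bound $\prod_{i\geq 3}(\log P_i(m_1))(\log P_i(m_2))\ll H^\epsilon$ over $\ll H^{d+n-d_h-d_k-2}$ polynomials already suffices, because the off-diagonal contribution is $\ll H^{d+n-1/2+\epsilon}$, which is dominated by the diagonal. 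The paper also streamlines your ``freeze top coefficients'' step by directly using the counting function $N(k_i,t_i)=\#\{P_i:P_i(m_1)=k_i,\,P_i(m_2)=t_i\}\ll H^{d_i-1}$ and absorbing it into the Heath-Brown coefficients.
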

\begin{proof}
By the Cauchy--Schwarz inequality it is enough to prove
\beq
{eq:tobeusedlater2}
{
\frac{1}{
\#\texttt{Poly}(H) 
}
\sum_{
\substack{ 
\b P \in 
\texttt{Poly}(H) 
  } } 
|\eta_{\b P  }(x; h, k )|^2
\ll x^{1+\frac{n}{A_1} }
.}
Without loss of generality we assume that $h=1, k=2$
and write $\eta_{\b P}(x)$ for $\eta_{\b P}(x; 1,2)$.
Using $|\eta_{\b P  }(x)|^2=\eta_{\b P  }(x)\overline{\eta_{\b P  }(x)}$  
and changing the order of summation  we 
write $\sum_{\b P \in  \texttt{Poly}(H)   } |\eta_{\b P  }(x)|^2$   
as
\begin{align*}  
\sum_{\substack{ m_1, m_2  \in \N \cap [1,x]  \\ 
m_1, m_2 \equiv n_0 \md{M}
}}
\
 &\sum_{ \substack{  
\b P \in  \texttt{Poly}(H)   
\\ P_i(m_j  ) \text{ prime for}\, i=1,\ldots,n, \,  j=1,2
 } }  
  \l(\frac{P_1(m_1)}{P_2(m_1)}\r)
\l(\frac{P_1(m_2)}{P_2(m_2)}\r)
\l(\prod_{\substack{  1\leq i \leq n \\ j=1,2   }} \log  P_i(m_j ) \r)
\times 
\\
\times 
\c F_1(P_i(m_1)_{i\neq 2  })
&\c F_2(P_i(m_1)_{i\neq 1  })
  \c G ( P_i(m_1)_{i\notin \{1, 2 \} }) 
\times 
\\
\times 
&
\overline{\c F_1(P_i(m_2)_{i\neq 2  }) } \  \overline{\c F_2(P_i(m_2)_{i\neq 1  })  } \ \overline{ \c G ( P_i(m_2)_{i\notin \{1, 2 \} }) }.\end{align*} 
Ignoring the congruence conditions modulo $M$
and using
 $|\c F_i |, |\c G| \leq 1 $
 we see that the modulus of the 
contribution of the diagonal terms   $m_1=m_2$
is at most 
 \[  
\sum_{1\leq  m_1 \leq x }
\prod_{i=1}^n 
\sum_{\substack{   | P_i   | \leq H,\, P_i>0   } }
\Lambda(P_i(m_1 ) )^2    
,\] which is
 $
\ll
x H^{d+n} (\log H)^n $ as in the proof of Lemma~\ref{lem:brunti}. 
This is   
sufficient 
because 
\[
x H^{d+n} (\log H)^n
=
x H^{d+n} 
 ((\log H)^{A_1})^{n/A_1}
\leq 
x H^{d+n} 
x^{n/A_1}
\ll  \#\texttt{Poly}(H) x^{1+n/A_1}
.\]  To study the 
remaining terms we introduce the variables 
\[
k_1 :=    P_1(m_1), k_2 :=P_2(m_1)  
\ \text{  and } \   
l_ 1 := P_1(m_2), l_2:= P_2(m_2) 
\] and sum over all   values of 
$l_i, k _i $.
Take any $\epsilon>0$.
For any integer polynomial $P$ of degree at most $d_{i}$ satisfying $|P|\leq H$  and for 
any  $m\leq x  $  with $P_i(m)$ prime one has $ \log P_i(m)= O_{\epsilon,d_{i}}(H^\epsilon) $. Using this we bound the 
modulus of the 
remaining sum 
 by  $O(\Xi) $, where 
 \begin{align*}
\Xi:=
&\sum_{\substack{
l_1, l_2  
 \in \N  \\ 1\leq m_1 \neq  m_2  \leq x      }}
 (\log l_1)(\log l_2) 
 \sum_{\substack{   P_3, \ldots, P_n \in \Z[t]  \\  P_i >0,\, \deg(P_i)=d_i,\, |P_i|\leq H  } } 
\hspace{-0,4cm}H^\epsilon
 \l| \sum_{\substack{ k_1 , k_2  \text{ primes } }}  \l(\frac{k_1 }{ k_2   }\r)
F_1(k_1,l_1) F_2(k_2,l_2) 
\r|  , \end{align*}where for  $i=1,2$  
and  $k , l \in \mathbb N $
we let  \[
F_i(k,l ) := (\log k ) N_i(k,l) 
 \c F_i (k  , (P_j(m_1 ))_{j\notin \{1, 2 \} })
 \overline{ \c F_i (l , (P_j(m_2))_{j\notin \{1, 2 \} })} 
 , \] and denote by 
$N_i(k,l)$   the number
$$\#\{
P \in \Z[t] : P>0,  \deg (P)=d_i,  
 |P|\leq H,  P\equiv Q_i\md{M}, P(m_1) = k ,    P(m_2) = l\}.$$ 
To complete the proof of~\eqref{eq:tobeusedlater2}
it is now sufficient to prove \begin{equation}\label{eq:xibound}\Xi \ll  \texttt{Poly}(H) \   x^{1+\frac{n}{A_1} }.\end{equation}
The conditions $P(m_1) = k $,    $P(m_2) = l$
define an affine subspace of codimension 2 in the vector space of polynomials
of degree $d_i$, hence $N_i(k,l) 
\ll H^{d_i-1} $. 
(This uses   $m_1\neq m_2$, which explains the precursory manoeuvre of    separating the diagonal terms  $m_1=m_2$.)
We obtain 
the estimate 
$F_i(k,l) 
 \ll (\log H ) H^{d_i-1} $  
with an implied constant depending only on $n$ and $ d_i $.
Since 
we have
$|P_i(m_1) |\leq (1+d_i) H x^{d_i}$, we can see that  
 $N_i(k,l)= 0 $  
unless
 $k , l  \leq (1+d_i) H x^{d_i} $,    
so we can apply Lemma~\ref{lem:largequa}
with $
K=  (1+d_1) H x^{d_1} 
$ and $
L= (1+d_2) H x^{d_2} 
$.
Hence the sum over $k_1, k_2 $ in the definition of $\Xi$
is $
\ll 
H^{d_1+d_2-1/2 +\epsilon}
$, where we used that $x\leq (\log H)^{A_2}\ll H^\epsilon$.
Therefore, 
\[
\Xi\ll 
H^{d_1+d_2-1/2 +\epsilon}
\hspace{-0,5cm}
\sum_{\substack{ 
l_1\leq K , l_2 
\leq L  \\ 1\leq m_1 \neq  m_2  \leq x      }}
(\log l_1)(\log l_2)\hspace{-0,5cm} 
\sum_{\substack{   P_3, \ldots, P_n \in \Z[t]  \\
P_i >0, \, \deg(P_i)=d_i, \, |P_i|\leq H 
} }
H^\epsilon
.\]
The number of terms in the 
sum over the $P_i$ is   
 $\ll H^{d+n-d_1-d_2-2}$ and
the sum over 
  $l_1, l_2,  
m_1, m_2$ is $\ll K L x^2 (\log K) (\log L) \ll 
 H^{2+  \epsilon}$. 
This proves that 
 \[
\Xi\ll 
 H^{d+n-1/2+3\epsilon}
\ll
 \#\texttt{Poly}(H)  H^{-1/2+3\epsilon}
,\]which immediately implies \eqref{eq:xibound} 
by choosing $\epsilon=1/6$. 
\end{proof}

\subsection{Indicator function of solvable conics}

Recall that for $a,b,c\in\Q_p^*$ the projective conic
$$ax^2+by^2+cz^2=0$$
has a $\Q_p$-point if and only if the Hilbert symbol $(-ac,-bc)_p$ is $1$.
We refer to \cite[Ch. III, \S 1]{Serre} for the standard formulae for the calculation of the Hilbert symbol.

Let $a_1$, $a_2$, $a_3$ be non-zero integers.
Let $p_{ij}$, where $i=1,2,3$ and $j=1,\ldots,n_i$, be distinct primes not dividing $2a_1a_2a_3$.
(If $n_3=0$, then $i=1,2$.)
For $k\in\N$ write $[k]=\{1,\ldots, k\}$.
Let $S_i$ be a subset of $[n_i]$. Define
$\pi(S_i)=\prod_{j\in S_i} p_{ij}$ and abbreviate $\pi([n_i])$ to $\pi_i$.
We denote by $S_i^c=[n_i]\setminus S_i$ the complement to $S_i$ in $[n_i]$. 
Let
$$Q=2^{-n}\l(2+ \Osum_{S_1, S_2, S_3}
\l(\frac{-a_{2} a_{3}\pi_{2} \pi_{3}}{\pi(S_1)}\r)
\l(\frac{-a_{1} a_{3}\pi_{1} \pi_{3}}{\pi(S_2)}\r)
\l(\frac{-a_{1} a_{2}\pi_{1} \pi_{2}}{\pi(S_3)}\r)\r),$$
where the sum is over all subsets $S_i\subset[n_i]$, $i=1,2,3$, such that
$(S_1, S_2, S_3)\neq (\emptyset, \emptyset, \emptyset )$
and $(S_1 , S_2 , S_3)\neq ([n_1], [n_2], [n_3])$.

\begin{lemma}
\label{lem:chatel34}
Let $n_1, n_2, n_3 $ be integers such that $n_1>0$, $n_2>0$, $n_3\geq 0$.
Let $a_1, a_2, a_3$ be non-zero integers 
not all of the same sign such that $a_1 a_2 a_3$ is square-free. 
Suppose that $p_{ij}$, for $i=1,2,3$ and $j=1,\ldots, n_i$, are distinct primes 
not dividing $2a_1a_2a_3$ such that
the conic $C$ given by
\begin{equation}
a_1 \pi_1 x^2+a_2\pi_2 y^2+  a_3 \pi_3 z^2=0, \label{eqC}
\end{equation}
has a $\Q_p$-point for all $p|2a_1a_2a_3$. Then $C(\Q)\neq\emptyset$
if and only if $Q=1$, otherwise $Q=0$. 
\end{lemma}
\begin{proof} 
The condition concerning the signs of the $a_i$ guarantees that $C(\R)\neq\emptyset$.
Therefore, $C(\Q)\neq\emptyset$ if and only if for every $i,j$ we have   
\[
\l(\frac{-a_{i'}   a_{i''} \pi_{i'} \pi_{i''}}
{p_{ij}}\r)=1 
,\] where  $\{i,i',i''\}=\{1,2,3\}$.  
Thus the following is $2^n$ when $C(\Q)\neq\emptyset$, and
0 when $C(\Q)=\emptyset$:
$$
\prod_{i=1}^3
\prod_{j=1}^{n_i}
\l(1+\l(\frac{-a_{i'}   a_{i''}\pi_{i'} \pi_{i''}}
{p_{ij}}
\r)
\r)
=
\sum_{S_1,S_2,S_3} \l(\frac{-a_{2} a_{3}\pi_{2} \pi_{3}}{\pi(S_1)}\r)
\l(\frac{-a_{1} a_{3}\pi_{1} \pi_{3}}{\pi(S_2)}\r)
\l(\frac{-a_{1} a_{2}\pi_{1} \pi_{2}}{\pi(S_3)}\r),$$
where the sum is over all subsets $S_i\subset\{1,\ldots,n_i\}$, $i=1,2,3$.
We separate the term 1 corresponding to the case when $S_i=\emptyset$ for $i=1,2,3$.
The term corresponding to the case when $S_i=[n_i]$ for $i=1,2,3$ is 
$$\l(\frac{-a_{2} a_{3}\pi_{2} \pi_{3}}{\pi_1}\r)
\l(\frac{-a_{1} a_{3}\pi_{1} \pi_{3}}{\pi_2}\r)
\l(\frac{-a_{1} a_{2}\pi_{1} \pi_{2}}{\pi_3}\r).$$
This equals $(-1)^r$, where $r$ is the number of pairs $(i,j)$ such that $C(\Q_{p_{ij}})=\emptyset$.
Since $C$ is locally soluble everywhere except, perhaps, at the primes $p_{ij}$, the product
formula for the Hilbert symbol implies that $r$ is even. Hence the above term is 1. 
\end{proof}

\begin{proposition} \label{sunset}
Let $n_1, n_2, n_3 $ be integers such that $n_1>0$, $n_2>0$, $n_3\geq 0$, and let
$n=n_1+n_2+n_3$.
Let $a_1, a_2, a_3$ be non-zero integers not all of the same sign 
such that $a_1 a_2 a_3$ is square-free. 
Let $M$ be a multiple of $8a_1a_2a_3$.
Let $n_0$ be an integer. Let
$Q_{ij}(t)\in \Z[t]$ be a polynomial of degree at most $d_{ij}$ such that $(Q_{ij}(n_0),M)=1$,
for $i=1,2,3$ and $j=1,\ldots,n_i$,
satisfying the following condition: for 
any integer $m\equiv n_0\bmod M$ and
any $n$-tuple of polynomials $\b P=(P_{ij}(t))\in(\Z[t])^n$
with $\deg P_{ij}=d_{ij}$
such that  $\b P\equiv\b Q\bmod M$ 
the conic $\eqref{eq:defisko}$ with $t=m$ has a $\Q_p$-point, for any $p|M$.
Then for 100\% of Schinzel $n$-tuples $\b P\equiv\b Q\bmod M$ with $\deg P_{ij}=d_{ij}$, ordered by height,
the conic bundle surface \eqref{eq:defisko} has a $\Q$-point.
\end{proposition}
\begin{proof}
For $\b P \in (\Z[t])^n $ such that $\b P \equiv\b Q \md{M}$ define the following
counting function
\[
C_{\b P}(x):=
 \sum_{\substack{ m \in \N \cap [1,x]  \\ 
m\equiv n_0 \md{M}
\\
P_{ij}(m ) \text{ prime}\, \text{for all} \, i,j
\\ P_{ij}(m)\neq P_{rs}(m)\, \text{if}\, (i,j)\neq (r,s)}} 
\l(\prod_{i=1}^3 \prod_{j=1}^{n_i}
\log  P_{ij}(m )\r)\mathds 1(m), 
\]
where $\mathds 1$ is the indicator function of those $m $ for which 
the conic~\eqref{eq:defisko} with $t=m$ has a $\Q$-point. Define
$$
\widetilde{ \theta}_\b P (x )=
\sum_{\substack{ m \in \N \cap [1,x]  \\ 
m\equiv n_0 \md{M}
\\
P_i(m ) \text{ prime for}\, i=1,\ldots,n 
\\
P_{ij}(m)\neq P_{rs}(m)\, \text{if}\, (i,j)\neq (r,s)}} 
\prod_{i=1}^3 
 \prod_{j=1}^{n_i}
\log  P_{ij}(m)
.$$
By the condition in the proposition and Lemma~\ref{lem:chatel34} we have 
\beq{eq:lastgoal1}
{
C_{\b P}(x)=
\frac{1}{ 2^{n-1} }
\widetilde{ \theta}_\b P (x ) 
+
\frac{1}{2^n}
 \Osum_{\b S\ } T_{\b S, \b P}(x). 
}
Here $\Osum$ is the sum over $\b S=(S_1,S_2,S_3)$, where $S_i\subset [n_i]$ for $i=1,2,3$
are such that at least one $S_i$
is non-empty and at least one complement $S_j^c=[n_j]\setminus S_j$ is non-empty, and
\begin{equation} \label{bigT}
\hspace{-0,2cm}
T_{\b S, \b P}(x):=
\hspace{-0,2cm}
 \sum_{\substack{ m \in \N \cap [1,x]  \\ 
m\equiv n_0 \md{M}
\\
P_{ij}(m ) \text{ prime for all} \, i,j\\
P_{ij}(m)\neq P_{rs}(m)\, \text{if}\, (i,j)\neq (r,s)}} 
\hspace{-0,2cm}
\prod_{i=1}^3 
 \l(\frac{-a_{i'}a_{i''}\prod_k P_{i' k}(m) \prod_l P_{i'' l}(m)  }
{\prod_{j \in S_i } P_{i j } (m)   }\r) 
\prod_{j=1}^{n_i}
\log  P_{ij}(m)
,\end{equation}
where $\{i,i',i''\}=\{1,2,3\}$. The bound $P_{ij}(m)=O_{d_{ij } } ( H x^{d_{ij} } ) $ yields     
$\log P_{ij}(m) =O_{d_{ij } } (  \log (Hx) )$, hence
\beq{eq:cantata}{ 0\leq  \theta_\b P (x )-\widetilde{ \theta}_\b P (x )
\ll_{n,d_{ij} } (\log (H x) )^n
.}

 We claim that for   all $x$ and $H\geq 3  $ with  
$
(\log H)^{2n }
<
x
\leq 
(\log H )^{3n  }
$ and all $\b S$ as above  we have 
\beq
{eq:lastgoal2}
{
   \frac{
1}{
\#\texttt{Poly}(H) 
} 
 \sum_{
\substack{ 
\b P \in 
\texttt{Poly}(H) 
  } } 
| T_{\b S, \b P}(x) |\ll  x^{3/4}
.} Assuming this, we see from~\eqref{eq:lastgoal1} 
 and~\eqref{eq:cantata} 
  that \[   \frac{1}{\#\texttt{Poly}(H) }  \sum_{ \substack{  \b P \in \texttt{Poly}(H)    } }  | C_{\b P}(x)-  2^{-n+1}  \theta_{\b P}(x) |     \ll  x^{3/4}
 +(\log H)^n \ll x^{3/4} \] due to $(\log H)^n \leq x^{1/2} $.
  Therefore, 
\[
\frac{
\#\{
\b P\in \texttt{Poly}(H)
:   
| C_{\b P}(x)-
 2^{-n+1} 
\theta_{\b P}(x) |   > x^{4/5}  \}}
{
\# \texttt{Poly}(H)}
\leq 
   \frac{
1}{
\#\texttt{Poly}(H) 
} 
 \sum_{
\substack{ 
\b P \in 
\texttt{Poly}(H) 
  } } 
\frac{| C_{\b P}(x)-
 2^{-n+1} 
\theta_{\b P}(x) |   }{x^{4/5}} 
,\] is $\ll  x^{-1/20}\ll (\log H)^{-2n/20}$. 
Schinzel $n$-tuples
$\b P\equiv \b Q \md{M}$
have positive density within $\texttt{Poly}(H) $
by Proposition~\ref{prop:betagammadelta}, hence, for $100\%$ of them
  one has 
\[
  C_{\b P}(x) \geq 
 2^{-n+1} 
\theta_{\b P}(x)    - x^{4/5}\geq 
 2^{-n+1} 
 \frac{\beta_0 x}{2( \log  \log x)^{d-n} }   - x^{4/5}
,\] where we used~\eqref{eq:Monteverdi - Vespro della Beata Vergine} in the second inequality.  
(The constant $\beta_0$ was introduced in Lemma \ref{lem:beta0}.)
Since $x\geq (\log H)^n $, we see that for all sufficiently large $H$ one has 
$  C_{\b P}(x) >0$.

To verify~\eqref{eq:lastgoal2} we
check that $T_{\b S, \b P}(x)$ is a particular case of the sum introduced in
Definition~\ref{def:sumsrpf}. 
(This crucially uses the assumptions $n_1>0$ and $n_2>0$.)
Using quadratic reciprocity 
and the identities $\pi_i=\pi(S_i)\pi(S_i^c)$, $i=1,2,3$, we rewrite each summand in 
(\ref{bigT}) as 
the product of $\prod_{i,j}\log  P_{ij}(m)$ and
$$ \l(\frac{-a_2a_3\pi(S_2^c)\pi(S_3^c) }{\pi(S_1)  }\r)   
\l(\frac{-a_1a_3\pi(S_1^c)\pi(S_3^c) }{\pi(S_2)  }\r)  
\l(\frac{-a_1a_2\pi(S_1^c)\pi(S_2^c) }{\pi(S_3)  }\r)
$$
multiplied by the product of  $(-1)^{(p-1)(q-1)/4}$ for all primes $p\in S_i$ and $q\in S_{i'}$, 
where $i\neq i'$.
Without loss of generality
we can assume that $S_1\neq\emptyset$. Take any $k\in S_1$.
If  $S_2^c $ or $S_3^c $ is non-empty, say $S_2^c\neq\emptyset$, choose any $h \in S_2^c$ and
separate the term $(\frac{P_h(m)}{P_k(m)})$
in the first quadratic symbol above. 
If $S_2^c $ or $S_3^c $ are both empty, then $S_1^c \neq \emptyset$ and
 $S_2\neq\emptyset$. Hence
there exist $h \in S_1^c $ and $k \in S_2$ so that 
we can separate the term $(\frac{P_h(m)}{P_k(m)})$ in the second quadratic symbol above.
Let $\c F_1$ be the product of
all the terms involving $h$ but not $k$, let $\c F_2$ be the product of all the terms involving
$k$ but not $h$, and let $\c G$ be the product of all the terms that depend neither on $k$ nor on $h$. We conclude by applying Proposition \ref{prop:rhbsums} with $A_1=2n$ so that $\frac{n}{2A_1}=\frac{1}{4}$.
\end{proof}

\subsection{Proof of Theorem \ref{thm3}} \label{proof_thm3}

Recall that $\m_{ij}\in\Z^{d_{ij}+1}$ are the coefficients of the polynomial 
$P_{ij}(t)\in\Z[t]$ of degree $d_{ij}$, where $i=1,2,3$ and $j=1,\ldots,n_i$.
Let $\x_{ij}=(x_{i,j,0},\ldots,x_{i,j,d_{ij}})$ be variables and let
$P_{ij}(t,\x_{ij})=\sum_{k=0}^{d_{ij}} x_{ijk}t^k$ be the generic polynomial
of degree $d_{ij}$. Let $V$ be the open subscheme of 
$\A^{d+n+1}_\Z$ given by the condition $\prod_{i,j} P_{ij}(t,\x_{ij})\neq 0$. 
Let $U$ be the subscheme of
$\P^2_\Z\times\A^{d+n+1}_\Z$ given by (\ref{eq:defisko}) 
and $\prod_{i,j} P_{ij}(t,\x_{ij})\neq 0$. 
Assigning the value $\m_{ij}\in\Z^{d_{ij}+1}$ to the variable $\x_{ij}$ 
we obtain a conic bundle $U_\m\subset \P^2_\Z\times\A^1_\Z$ given by (\ref{eq:defisko}) 
together with the condition $\prod_{i,j} P_{ij}(t,\m_{ij})\neq 0$. 

Let $f:U\to V$ be the projection to the coordinates $t$ and $\x$. As in Section \ref{rat}
we denote by $g$ (respectively, by $h$) the projection to the coordinate $t$ (respectively, $\x$).

We follow the scheme of proof of Theorem \ref{thm1}. 
Let $S$ be the set of prime factors of $2a_1a_2a_3$.
The analogue
of Lemma \ref{erti} says that the fibre of the projective morphism $f:U\to V$
at any $\Z_p$-point of $V$ has a $\Q_p$-point when $p\notin S$. 
Indeed, this fibre is a conic with good reduction.

Since $f:U\to V$ is proper, the induced map $f:U(\Q_p)\to V(\Q_p)$ is topologically proper
\cite[p.~79]{Conrad}.
As $V(\Q_p)$ is locally compact and Hausdorff, $f:U(\Q_p)\to V(\Q_p)$ is a closed map.
We have $f(U(\Z_p))=f(U(\Q_p))\cap V(\Z_p)$, hence
$f(U(\Z_p))$ is closed in $V(\Z_p)$.
Since $V(\Z_p)$ is compact, $f(U(\Z_p))$ and $h(U(\Z_p))$ are compact too.
Thus $\prod_{p\in S}h(U(\Z_p))$ is compact.

Lemma \ref{ori} only uses the smoothness of 
$g\colon U_\Q\to\A^1_\Q$ and $h\colon U_\Q\to\A^{d+n}_\Q$, so it also holds in
our case. It implies that
for $p\in S$ and $N_p\in U(\Z_p)$ there is a positive integer $M_p$ such that if $\nu\in\Z_p$ and 
$\m\in(\Z_p)^{d+n}$ satisfy
\begin{equation}
\max\big( |\nu-g(N_p)|_p, |\m-h(N_p)|_p\big)\leq p^{-M_p}, 
\label{openset-conic}
\end{equation}
then $U_{\nu,\m}(\Z_p)\neq\emptyset$.
Let $\mathcal B_{N_p}\subset \Z_p^{d+n}$ be the $p$-adic ball of radius $p^{-M_p}$ around
$h(N_p)$. The open sets $\prod_{p\in S}\mathcal B_{N_p}$, where $(N_p)\in \prod_{p\in S}U(\Z_p)$, 
cover $\prod_{p\in S}h(U(\Z_p))$. By compactness, finitely many such open sets 
cover $\prod_{p\in S}h(U(\Z_p))$. Hence
$\mathcal M=\cup_{i=1}^n \mathcal M_i$, where 
$\mathcal M_i=\mathcal M\cap \prod_{p\in S}\mathcal B_{N_p}$ for one of these finitely many choices
of $(N_p)\in \prod_{p\in S}U(\Z_p)$.
Thus it is enough to prove that for 100\%
of $\m\in\mathcal M_i$ we have $U_\m(\Q)\neq\emptyset$.

In the rest of proof we write $\mathcal M=\mathcal M_i$.
Write $n_p=g(N_p)$ and $\m_p=h(N_p)$, where $p\in S$. 
Note that $N_p\in U(\Z_p)$ implies $P_{ij}(n_p,\m_p)\in\Z_p^*$ for each $p\in S$.
Write $M=\prod_{p\in S} p^{M_p}$. By the Chinese remainder theorem
we can find $n_0\in\Z$ and $\m_0\in\Z^{d+1}$ such that $n_0\equiv n_p\bmod {p^{M_p}}$ and 
$\m_0\equiv \m_p\bmod {p^{M_p}}$ for each $p\in S$. Our new set 
$\mathcal M$ consists of all $\m\in\mathcal P$
such that $\m\equiv \m_0\bmod M$.
Since $P_{ij}(n_p,\m_p)\in\Z_p^*$ for each $p\in S$, 
we see that $P_{ij}(n_0,\m_0)$ is coprime to $M$. 

We now apply Proposition \ref{sunset} to our $n_0$ and $M$, with $Q_{ij}(t)=P_{ij}(t,\m_0)$
for all $i$ and $j$. 
This is legitimate because $P_{ij}(n_0,\m_0)$ is coprime to $M$ and for any integer 
$\nu\equiv n_0\bmod M$ and any $\m\equiv \m_0\bmod M$ we have 
$U_{\nu,\m}(\Z_p)\neq\emptyset$ whenever $p\in S$.
Thus for 100\% of $\m\in\mathcal M$ we have $U_\m(\Q)\neq\emptyset$.

The last statement of Theorem \ref{thm3} is proved in the same way as in Theorems \ref{thm1}
and \ref{thm2}. 

 \subsection{The proof of Theorem \ref{TTT}}
\label{ppp}
We can ensure that $a_1$, $a_2$, $a_3$ are not all of the same sign
by replacing $P_{1,1}(x)$ by $-P_{1,1}(x)$, if necessary. 
We can also ensure that $a_1a_2a_3$ is square-free.
(If $p$ is a prime such that $p^2|a_1$, we absorb $p$ into $x$; if $p|a_1$ and $p|a_2$, then 
we multiply (\ref{eq:defisko}) by $p$ and absorb $p$ into $x$ and $y$.)
It remains to apply Theorem \ref{thm3}.

\section{Explicit probabilities}
 \label{s:newapp} 
In this section we obtain an explicit estimate for the probability that 
random affine Ch\^atelet surfaces have integer points, following the method of Theorem \ref{easy}.
We prove that this probability exceeds $56\%$ 
for  a family that has attracted much attention in the literature, namely,  
\beq
{eq:bachlute456}
{
x^2+y^2 =f(t)
,}
where $f$ is a polynomial of fixed degree  $d$ with positive
leading coefficient. V.A.~Iskovskikh \cite{isk} gave a first counter-example to the Hasse principle
with $d=4$; the density of such counterexamples was studied in \cite{bretim}  and \cite{MR3976470}. 
Little is known about the arithmetic of~\eqref{eq:bachlute456} when $d>6$ and $f(t)$ is irreducible.
Let
\[
P_d(H):=\{f\in \Z[t]: \deg(d)=d, |f|\leq H, \text{the leading coefficient of $f$ is positive} \}
.\]
\begin{theorem}
\label{thm:bachnight2}
For all $d\geq 2$, $ \epsilon>0$ and all sufficiently large $H   $  we have 
\[
\frac{\#\{f\in P_d(H): x^2+y^2=f(t) \text{\ is soluble in } \Z\}}{\#P_d(H)}
\geq(1 -\epsilon)
\frac{\l(38+\mathds 1(d\geq 3 )  \r) }{64}
\prod_{p\geq 3 } 
\l(1-\frac{1}{p^{\min\{p,d+1\} }}\r) 
. \] 
\end{theorem}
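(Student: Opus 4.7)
The plan is to apply the method of Theorem~\ref{easy} to $K = \Q(i)$, whose extended Hilbert class field is $K$ itself (class number $1$ and totally imaginary) and which satisfies $K \subset \Q(\zeta_4)$; we therefore take $M = 4$. By Fermat, any rational prime $p \equiv 1 \pmod 4$ equals $a^2 + b^2$ with $a, b \in \Z$, so whenever $f(m) = p$ for some $m \in \Z$, the triple $(a, b, m)$ is an integer solution of $x^2 + y^2 = f(t)$.

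Call $Q \in (\Z/4\Z)[t]$ of degree $\leq d$ \emph{admissible} if there exists $n_0 \in \Z/4\Z$ with $Q(n_0) \equiv 1 \pmod 4$. For admissible $Q$ the condition $\gcd(Q(n_0), 4) = 1$ is automatic, so Theorem~\ref{cor:almostal} (with $n = 1$, $M = 4$) yields that $100\%$ of Schinzel $f \in P_d(H)$ satisfying $f \equiv Q \pmod 4$ admit an integer $m \equiv n_0 \pmod 4$ for which $f(m)$ is prime; this prime is $\equiv 1 \pmod 4$ and so is a sum of two squares. Moreover, admissibility forces $Q \pmod 2$ to take an odd value, so $f$ is automatically Bouniakowsky at $2$; the Bouniakowsky condition at odd primes is imposed as an independent restriction.

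The heart of the argument is the enumeration of admissible classes. The values $f(j) \pmod 4$ for $j \in \{0, 1, 2, 3\}$ are $\Z$-linear functionals of the coefficients $c_0, \ldots, c_d$ of $f$ reduced mod $4$: one has $f(0) = c_0$, $f(1) = \sum_i c_i$, $f(2) \equiv c_0 + 2 c_1$ and $f(3) \equiv c_0 - c_1 + \sum_{i \geq 2} (-1)^i c_i \pmod 4$ (using $3^i \equiv (-1)^i \pmod 4$). A direct enumeration of the complementary event, where all four values lie in $\{0, 2, 3\}$, by conditioning on $c_1 \pmod 4$ and then on $c_0 \pmod 4$, and finally on the two linear combinations $\sum_{i \geq 2} c_i$ and $\sum_{i \geq 2} (-1)^i c_i \pmod 4$, shows that the number of non-admissible classes is $26$ when $d = 2$ and stabilises at $25 \cdot 4^{d - 2}$ when $d \geq 3$. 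Hence there are $38$ and $39 \cdot 4^{d - 2}$ admissible classes out of $4^{d + 1}$ total, giving density $(38 + \mathds{1}(d \geq 3))/64$.

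Finally, Proposition~\ref{prop:betagammadelta} applied with $n = 1$, $M = 4$ and each admissible $Q$ counts $f \in P_d(H)$ congruent to $Q \pmod 4$ that are Bouniakowsky at every odd prime, producing the asymptotic
\[
\frac{2^d H^{d+1}}{4^{d+1}} \prod_{p \geq 3}\!\left(1 - p^{-\min\{p, d+1\}}\right) + o(H^{d+1}).
\]
Summing over admissible $Q$ and dividing by $\#P_d(H) = 2^d H^{d+1}(1 + o(1))$ produces the stated density. On $100\%$ of these polynomials an integer solution exists by the previous step, supplying the $(1 - \epsilon)$ factor for $H$ large. The main obstacle is the combinatorial bookkeeping underlying $38 + \mathds{1}(d \geq 3)$: verifying the count in the small cases $d = 2, 3$ and the stabilisation thereafter, which comes down to the observation that the two linear forms $\bigl(\sum_{i \geq 2} c_i,\ \sum_{i \geq 2} (-1)^i c_i\bigr) \pmod 4$ become jointly surjective onto $(\Z/4)^2$ once $d \geq 3$.
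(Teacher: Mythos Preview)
Your approach is essentially the same as the paper's: apply Theorem~\ref{cor:almostal} with $M=4$ and $Q$ running over residue classes that hit $1\pmod 4$ at some $n_0$, then combine with Proposition~\ref{prop:betagammadelta} for the odd primes. The final numbers you quote ($26$ non-admissible classes when $d=2$, and $25\cdot 4^{d-2}$ for $d\ge 3$) are correct and match the paper.

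There is, however, a genuine error in the justification you offer for the stabilisation at $d\ge 3$. You claim that the pair of linear forms
\[
\Bigl(\sum_{i\ge 2} c_i,\ \sum_{i\ge 2}(-1)^i c_i\Bigr)\pmod 4
\]
is jointly surjective onto $(\Z/4)^2$ once $d\ge 3$. This is false for every $d$: the difference of the two forms is $2(c_3+c_5+\cdots)$, hence always even, so the image lies in $\{(s,t):s\equiv t\pmod 2\}$ and has size $8$, not $16$. Your direct enumeration for $d=2,3$ may well be correct, but the reason you give for why the count stabilises thereafter does not hold.

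The paper sidesteps this by arguing dually: fix the value vector $(v_0,v_1,v_2,v_3)\in\{0,2,3\}^4$ and observe that the system
\[
c_0\equiv v_0,\quad 2c_1\equiv v_2-v_0,\quad \sum_{i} c_i\equiv v_1,\quad 2\sum_{i\ \text{even}} c_i\equiv v_1+v_3\pmod 4
\]
is solvable if and only if $v_0\equiv v_2$ and $v_1\equiv v_3\pmod 2$ (giving exactly $5\times 5=25$ vectors), and that for each solvable vector the solution set in $(c_0,\ldots,c_d)$ has size $4^{d-2}$ once $d\ge 3$ (solve for $c_0,c_1,c_2,c_3$ and leave $c_4,\ldots,c_d$ free). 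This cleanly yields $25\cdot 4^{d-2}$ and avoids any surjectivity claim about the pair $(A,B)$. Replacing your final paragraph with this argument closes the gap.
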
  
The infinite product  
is a strictly increasing function of $d$. 
For $d=2$ it equals  $
0.95\ldots$ 
and as $d\to \infty $ the limit of the  product is $ \prod_{  p\geq 3    } ( 1- p^{-p } ) = 0.962 \ldots \ . $
\begin{corollary} For every  $d\geq 2 $ and all 
sufficiently large $H$   we have 
\[
\frac{\#\{f\in P_d(H): x^2+y^2=f(t)\text{\ is soluble in } \Z \}}{\#P_d(H)}
> \frac{56 }{100 } 
.\] 
\end{corollary}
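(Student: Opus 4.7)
The plan is to follow the blueprint of Theorem~\ref{easy} with $K = \Q(\sqrt{-1})$: because $K$ is totally imaginary abelian and $\mathcal{O}_K = \Z[i]$ has class number one, every rational prime $p \equiv 1 \pmod 4$ is an integer sum of two squares. Hence it suffices to find $t_0 \in \Z$ for which $f(t_0)$ is a prime $\equiv 1 \pmod 4$.

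First, I would partition by residues modulo $M = 4$: set
\[A := \bigl\{\bar f \in (\Z/4\Z)[t]_{\leq d} : \bar f(n_0) \equiv 1 \pmod 4 \text{ for some } n_0 \in \{0,1,2,3\}\bigr\}.\]
For each $\bar f \in A$, pick such an $n_0$ and apply Theorem~\ref{cor:almostal} with $n = 1$, $d_1 = d$, $M = 4$, $Q_1 = \bar f$; the coprimality hypothesis holds because $\bar f(n_0) \equiv 1 \pmod 4$. The conclusion is that for $100\%$ of Bouniakowsky polynomials $f \equiv \bar f \pmod 4$, some $m \equiv n_0 \pmod 4$ produces a prime $f(m) \equiv 1 \pmod 4$, hence representable as $x^2 + y^2$ in $\Z$. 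Note that Bouniakowsky at $\ell = 2$ is automatic for $\bar f \in A$, since $\bar f(n_0)$ is odd. Summing the count from Proposition~\ref{prop:betagammadelta} (with $n = 1$, $M = 4$, $Q = \bar f$) over $\bar f \in A$ and dividing by $\#P_d(H) = 2^d H^{d+1}(1 + O(1/H))$, the proportion of $f \in P_d(H)$ covered by this construction is at least
\[(1 - \epsilon) \cdot \frac{\#A}{4^{d+1}} \prod_{p \geq 3} \bigl(1 - p^{-\min\{p,\, d+1\}}\bigr)\]
for all sufficiently large $H$.

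It therefore remains to verify $\#A / 4^{d+1} = (38 + \mathds{1}(d \geq 3))/64$, which I expect to be the main technical step. Since $m^j \bmod 4$ depends only on $m \bmod 4$, membership in $A$ is controlled by the polynomial function $\bar f : \Z/4 \to \Z/4$, i.e.\ by the four residues $f(0) = c_0$, $f(1) = \sum_i c_i$, $f(2) \equiv c_0 + 2c_1 \pmod 4$, and $f(3) \equiv \sum_i (-1)^i c_i \pmod 4$. For $d = 2$, a direct case analysis over the $64$ triples $(c_0, c_1, c_2) \in (\Z/4)^3$, split on $c_0$ and using that $f(2) - f(0) = 2c_1$ is always even, yields $\#A = 38$. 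For $d \geq 3$, the key observation $m^{k+2} \equiv m^k \pmod 4$ for every $m \in \Z$ and every $k \geq 2$ shows that the image of the evaluation map $(\Z/4)[t]_{\leq d} \to \operatorname{Fun}(\Z/4, \Z/4)$ stabilises at the same $64$-element subgroup $F$ already realised at $d = 3$, while its kernel has order $4^{d-2}$. A routine enumeration over $F$ (or equivalently over $(\Z/4)[t]_{\leq 3}$) shows that exactly $39$ of its elements take the value $1$, giving $\#A = 39 \cdot 4^{d-2}$. Combining these two evaluations of $\#A$ with the displayed density estimate completes the proof.
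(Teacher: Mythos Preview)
Your proposal is correct and follows essentially the same route as the paper: the paper first proves Theorem~\ref{thm:bachnight2} (the explicit lower bound $(38+\mathds 1(d\geq 3))/64$ times the Euler product) by exactly your argument---apply Theorem~\ref{cor:almostal} with $M=4$ to Bouniakowsky $f$ whose reduction mod $4$ hits the value $1$, then compute the density $r_d$ of such reductions---and deduces the Corollary by the numerical check that $(38/64)\prod_{p\geq 3}(1-p^{-3})\approx 0.57>0.56$ at $d=2$, with the bound only improving for larger $d$. Your computation of $\#A/4^{d+1}$ via the image and kernel of the evaluation map $(\Z/4)[t]_{\leq d}\to\operatorname{Fun}(\Z/4,\Z/4)$ is a slightly cleaner packaging of the paper's direct linear-system count for $1-r_d$, but arrives at the same $r_d=39/64$ for $d\geq 3$; the only step you leave implicit is that final numerical evaluation.
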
 
 To prove Theorem~\ref{thm:bachnight2}
we apply Theorem~\ref{cor:almostal}
with $n=1$, $M=4$, $n_0 \in \{0,1,2,3\}$
and arbitrary  $Q_1(t)$ of degree at most $d$ such that $Q_1(n_0)$ is 1 modulo 4.
It shows that for $100\%$ of Bouniakowsky polynomials 
$f(t)$  of degree $d$
such that $f(n_0)$ is 1 modulo 4,
there exists an integer $m$ such that $f(m)$ is a prime congruent to 1 modulo 4.
In this case~\eqref{eq:bachlute456}
has an integer solution. Thus, for all 
 $\epsilon>0$ and  all sufficiently large $H$ we have 
\[
\frac{\#\{f\in P_d(H): x^2+y^2=f(t) \text{\ is soluble in } \Z \}}{\#P_d(H)}
\geq
R_d(H)
 -\epsilon
,\]
where 
\[
R_d(H)
:=
\frac{\#\{f\in P_d(H): f \text{\,is Bouniakowsky}, \ \exists \   n_0 \in \{0,1,2,3\}  \text{\ such that } f(n_0)\equiv 1 \md{4} \}}{\#P_d(H)}
.\]
It is therefore sufficient to show  that $\lim_{H\to \infty} R_d(H)$ exists and find its value.
For this we partition the coefficients of   $f$  according to their values modulo $4 $ as follows:
$$R_d(H)\#P_d(H)=
\sum_{\substack{Q\in (\Z/4\Z)[t],  \deg(Q)\leq d \\ \exists  n_0 \in \Z/4\Z:\  Q(n_0)\equiv 1 \md 4 }}
\#\{f\in P_d(H): f \equiv Q \md 4, \, Z_f(p)\neq p, \ \forall p\ge 3 \}
.$$ By Corollary~\ref{densitySch01234} with $M=4 $ and the fact that $\#P_d(H)$ is asymptotic to $2^d H^{d+1}$ 
we obtain  \[
\lim_{H\to \infty} R_d(H)=r_d
\prod_{p\geq 3 } \l(1-\frac{1}{p^{\min\{p,d+1\} }}\r)
,
\] where 
\[
r_d
:=
\frac{1}{4^{d+1} }\#\{Q\in (\Z/4\Z)[t]  : \deg(Q)\leq d,   \exists \   n_0 \in \{0,1,2,3\} \text{\ such that } Q(n_0)\equiv 1 \md{4}  \}
.\]
A straightforward listing shows that $r_2=19/32$.
For the remaining case $d\geq 3 $ we 
write $f(t)=\sum_{i=0}^d c_i t^i$, thus  
\[1-r_d=
\frac{1}{4^{d+1} }
\sum_{(v_0, v_1,v_2,v_3 ) \in \{0,2,3\}^4 }
\#\l\{\b c \in (\Z/4\Z)^{d+1}: \sum_{i=0}^d c_i j^i \equiv v_j \md{4}, \ \forall j=0,1,2,3   \r\}
.\]
The system of 
four equations corresponding to $ j=0,1,2,3$ is equivalent to 
\[
c_0\equiv v_0 \md{4},
2 c_1  \equiv v_2-v_0 \md{4},
\sum_{0\leq i \leq d  }    c_i    \equiv v_1 \md{4},
2 \sum_{0\leq   i \leq d/2  }   c_{2i}   \equiv   v_1+v_3  \md{4}
.\]
This system has at least four unknowns $c_i$ due to $d\geq 3 $.
It is soluble if and only if both 
$v_0\equiv v_2 \md{2}$ and $v_1\equiv v_3 \md{2}$
hold; this happens for exactly $25$ vectors $(v_i)\in \{0,2,3\}^4$.
For each of these vectors, the first equation
determines $c_0$ uniquely and the second equation gives two values of $c_1$.
For any such $c_0,c_1$ and any $c_4, c_5, \ldots, c_d$ 
the last equation gives two values of $c_2$.
The   third equation determines  $c_3$ uniquely. Thus we obtain
\[1-r_d=
\frac{1}{4^{d+1} }
\times   25 \times 
 (1
\times  2 
\times  1 
\times 2 
\times 4^{d+1-4}
)=\frac{25}{64}
. \]

\end{document}